\documentclass{amsart}
\usepackage{amsmath,amssymb,latexsym}
\usepackage[mathscr]{euscript}
\usepackage{enumerate}
\usepackage[all]{xy}

\newcommand{\calA}{{\mathcal{A}}}

\newcommand{\calV}{{\mathcal{V}}}
\newcommand{\calC}{\mathcal{C}}
\newcommand{\calK}{\mathcal{K}}
\newcommand{\calL}{\mathcal{L}}

\newcommand{\calO}{\mathcal{O}}
\newcommand{\calQ}{\mathcal{Q}}

\newcommand{\calS}{\mathcal{S}}
\newcommand{\calT}{\mathcal{T}}
\newcommand{\calU}{\mathcal{U}}
\newcommand{\calZ}{\mathcal{Z}}

\newcommand{\sfG}{\mathsf{G}}
\newcommand{\sfH}{\mathsf{H}}

\newcommand{\sfT}{\mathsf{T}}

\newcommand{\R}{\mathbb{R}}

\newcommand{\N}{\mathbb{N}}

\newcommand{\Hom}{\operatorname{Hom}}

\newcommand{\id}{\operatorname{id}}

\newcommand{\chx}{\mathsf{x}}
\newcommand{\Tu}{\mathrm{T}}
\newcommand{\proofthm}[1]{\noindent\textit{Proof of Thm.~\ref{#1}}.}
\newcommand{\proofend}{\mbox{ }\hfill$\Box$\\[2mm]}
\newcommand{\opstar}[1]{\operatorname{St}(#1)}
\newcommand{\clstar}[1]{\overline{\operatorname{St}}(#1)}
\newcommand{\orbproj}{\omega}

\numberwithin{equation}{section}
\theoremstyle{plain}
        \newtheorem{theorem}{Theorem}[section]
        \newtheorem{lemma}[theorem]{Lemma}
        \newtheorem{proposition}[theorem]{Proposition}
        \newtheorem{corollary}[theorem]{Corollary}

\theoremstyle{definition}
        \newtheorem{definition}[theorem]{Definition}
        \newtheorem{remark}[theorem]{Remark}
        \newtheorem{example}[theorem]{Example}

\title{Geometry of orbit spaces of proper Lie groupoids}
\author{M.J.~Pflaum, H. Posthuma,~\textrm{and} X.~Tang}
\begin{document}
\begin{abstract}
In this paper, we study geometric properties of quotient spaces of proper 
Lie groupoids. First, we construct a natural stratification on such spaces 
using an extension of the slice theorem for proper Lie groupoids of 
Weinstein and Zung. Next, we show the existence of an appropriate metric on 
the groupoid which gives the associated Lie algebroid the structure of a 
singular riemannian foliation. With this metric, the orbit space inherits 
a natural length space structure whose properties are studied. Moreover, 
we show that the orbit space of a proper Lie groupoid can be triangulated. 
Finally, we prove a de Rham theorem for the complex of basic differential 
forms on a proper Lie groupoid. 
\end{abstract}
\address{\newline
Markus J. Pflaum, {\tt markus.pflaum@colorado.edu}\newline
         \indent {\rm Department of Mathematics, University of Colorado,
         Boulder, USA}\newline
        Hessel Posthuma, {\tt H.B.Posthuma@uva.nl}\newline
         \indent {\rm Korteweg-de Vries Institute for Mathematics,
        University of Amsterdam,
         The Netherlands} \newline
        Xiang Tang, {\tt xtang@math.wustl.edu}   \newline
         \indent {\rm  Department of Mathematics, Washington University,
         St.~Louis, USA}}
\maketitle
\section{Introduction}
Proper Lie groupoids are natural generalizations of proper Lie group actions on manifolds.
They appear naturally in foliation theory, cf.\  \cite{MolRF,mm}, and also Poisson
geometry cf. \ \cite{w:linear2}. Orbit spaces of proper Lie group actions and
their stratification theory are well  studied in the mathematical
literature (cf.~\cite{pf:orbit} and references therein). However, much less is known
about the quotient spaces - sometimes called ``coarse moduli spaces'' - of general 
proper Lie groupoids. The goal of this paper is to fill this gap. Interestingly, 
some of our results seem to be new even in the case of a proper Lie group action.

We first study the stratification theory of such quotient spaces. In principle, Zung's
linearization theorem \cite{Zun} for proper Lie groupoids gives a
powerful tool to address this question, but it requires the groupoid to be source 
locally trivial and to have orbits of finite type. Under these assumptions Dragulete, 
cf.\ \cite{dr:thesis}, proved that the quotient space is indeed a Whitney stratified space.
However these conditions seem unnatural and impose some restrictions on the applicability of such a 
theorem as there are examples of proper Lie groupoids which do not satisfy the ``finite type" 
condition, cf.\ \cite{w:linear2}. Here, we prove a full generalization and show that the 
quotient space of any proper Lie groupoid carries a canonical Whitney B stratification 
together with a system of smooth control data. 
The main ingredient of our proof
is a careful study of a neighborhood system of an orbit from which we
are able to drop the assumptions of ``finite type" and ``source locally
trivial". Our analysis also shows that even as a stratified space the quotient space of a 
proper Lie groupoid depends only on the Morita equivalence class of the underlying groupoid.

Next, we study the metric properties of such quotient spaces. For this purpose, we introduce the notion of a ``transversally invariant riemannian metric'' on a Lie groupoid. In contrast to the case of a manifold with a
proper Lie group action, the definition of an invariant metric does not generalize straightforwardly to an arbitrary Lie groupoid,
since there is no canonical representation of a Lie groupoid on its tangent bundle. Therefore, a transversally invariant riemannian metric is only required to be invariant with respect to the 
action of the groupoid on the normal bundle to each orbit, which by contrast is canonically defined.

We prove that any proper Lie groupoid admits a transversally invariant riemannian metric, even a complete one. 
Among the main consequences of this  are the following two results:
\begin{itemize}
\item (Proposition \ref{prop-lie-SRF})
 The unit space of the groupoid carries the structure of a
 singular riemannian foliation in the sense of Molino
 \cite{MolRF} with leaves given by the connected
 components of the orbits. When the groupoid is $s$-connected,
  the leaves coincide with the orbits.
\item (Theorem \ref{Thm:SliceThm2})
 For any orbit $\calO$, there exists a ``metric tubular neighborhood'' $T^\varepsilon_\calO$, controlled 
 by a continuous function $\varepsilon: \calO \to \mathbb{R}_{>0}$, over which the groupoid $\sfG$
 is isomorphic to its linearization $(\sfG_{|\calO}\ltimes N\calO)_{|T^\varepsilon_{\calO,N\calO}}$, 
 where $N\calO$ is the normal bundle to $\calO$.
\end{itemize}
The latter theorem is the above mentioned strengthening of the slice theorem of Zung and Weinstein. 
An important point is that the metric tubular neighborhood need not be invariant. However,
combining both results above, we are able to prove that invariant metric tubular neighborhoods 
exist for a compact orbit in an $s$-connected proper Lie groupoid. This implies that in such 
a groupoid, any compact orbit is stable, cf.\ Proposition \ref{prop:inv-nbhd}.  

In the special case that the quotient space of a proper Lie groupoid happens 
to be smooth, one easily shows that a transversally invariant riemannian metric descends to the quotient
turning the canonical projection into a riemannian submersion. In the general
case where the quotient is singular, we use the two results above to define a
natural metric on the quotient space of a proper Lie groupoid, and prove that
the canonical projection is a submetry. This property implies that the 
quotient space is a length space, and even an Alexandrov space if the 
riemannian curvature of the transversally invariant riemannian metric is bounded below. 

Using our previous results on the existence of smooth control data for
quotient spaces of proper Lie groupoids we succeed to show that 
these spaces can be triangulated. 
This result has significant topological consequences. In particular,
it entails that the orbit space of a proper Lie groupoid has
arbitrarily fine good coverings.   

Finally, we consider the cohomology of the quotient space. There exists a 
natural generalization of basic cohomology of foliations to the category 
of Lie groupoids, which is invariant under Morita equivalence. 
Using our previous result on the triangulability of the quotient space
we prove a de Rham theorem for the basic cohomology of a proper Lie 
groupoid and show that it is finite dimensional in the compact case. 
This result can be viewed as an analog of Schwarz's 
\cite{Schwa:thesis} de Rham theorem for a compact orbispace. 

This paper is organized as follows. For the convenience of the reader
and to explain some notation we recall in Section \ref{sec:prelim} 
several basic concepts of the theory of stratified spaces. 
In Section \ref{sec:linearization}, we recall Zung's linearization theorem 
for proper Lie groupoids \cite{Zun} and prove some useful local properties 
of proper Lie groupoids. In particular, we show that every proper Lie groupoid 
has a transversally invariant riemannian complete riemannian metric $\eta$. For a transversally invariant riemannian metric 
on the underlying proper Lie groupoid we prove in Section
\ref{sec:metric-slice} a metric tubular neighborhood theorem 
thus improving the slice theorem for groupoids from \cite{w:linear2}. 
The natural stratification and control structure of the quotient space  
of a proper Lie groupoid is constructed in Section \ref{sec:stratification}. 
After that, we show in Section \ref{sec:metric} that a transversally invariant riemannian metric 
on a proper Lie groupoid induces a length space metric on the quotient space 
such that the quotient map becomes a submetry. 
Using the stratification and control structure from 
Section \ref{sec:stratification} we derive in Section \ref{sec:triangularization}
that the quotient space even carries a triangulation. 
As an application, we discuss in the final Section \ref{sec:derham}
the basic cohomology of a proper Lie groupoid and prove a de Rham theorem.\\

\noindent{\bf Acknowledgments:} We would like to thank Marius Crainic,
Rui Fernandes, Matias del Hoyo, and Eugene Lerman for helpful discussions. 
In particular Marius Crainic made extensive comments on the first version 
of the paper, and shared with us a preliminary version of \cite{cs}. 
Subsequent discussions with him on the stratification theory of orbit spaces
helped to improve the paper. Matias del Hoyo kindly pointed to us a mistake in the original proof 
of Proposition \ref{Prop:InvMet}. He  and Rui Fernandes suggested a key idea to correct it (See Remark \ref{rmk:rui}). We also thank the referee for constructive 
advice.  Finally, X.T.~and M.J.P.~gratefully acknowledge support by the NSF.
Tang's research has been partially supported by NSF grants DMS 0703775 and DMS 0900985,
Pflaum's research by NSF grants DMS 1066222 and DMS 1105670.
\section{Stratified spaces and smooth structures}
\label{sec:prelim}
We briefly recall the definitions of a stratified space by Mather
\cite{ma:stratification}. Let $X$ be a paracompact Hausdorff space
with countable topology.

\begin{definition}\label{dfn:decomposition}A decomposition of $X$
consists of a locally finite partition $\calZ$ of $X$  into locally
closed subspaces (called pieces) $S\subset X$ satisfying the
following conditions:
\begin{enumerate}[{\rm (DC1)}]
\item Every piece $S$ of $\calZ$ is a smooth manifold with the
induced topology,
\item If $R\cap \overline{S}\ne \varnothing$ for a pair of $R,S\in
\calZ$, then $R$ is contained in $\overline{S}$. $R$ is called
incident to $S$ and this is written as $R\leq S$.
\end{enumerate}
\end{definition}

In this definition, we allow manifolds to have connected components of 
different dimensions. Although this could be avoided in the rest of the paper 
by refining partitions in each step by taking connected components, we have 
chosen this convention because it simplifies the notation.

A continuous map $f:X\to Y$ between decomposed spaces is called a
{\em morphism} of decomposed spaces if for every piece $S$ of
$\calZ_X$ there is a piece $R_S\in \calZ_Y$ such that the image of
$S$ under $f$ is contained in $R_S$, and the restriction map
$f|_S:S\to R_S$ is smooth. For two decompositions $\calZ_1,
\calZ_2$ of a topological $X$, $\calZ_1$ is said to be coarser than
$\calZ_2$ if the identity map is a morphism of decomposed spaces from
$(X, \calZ_2)$ to $(X, \calZ_1)$.

Let $x$ be a point of $X$. Two subsets $A$ and $B$ of $X$ are taken
to be equivalent at $x\in A\cap B$ if there is an open neighborhood $U$ of $x$
with $A\cap U=B\cap U$. The equivalent class of the all subsets that
are equivalent the subset $A\subset X$ is called the germ of $A$ at
$x$, and denoted by $[A]_x$.

\begin{definition}\label{dfn:stratified}
  A stratification of a locally compact
  topological Hausdorff space $X$ is a mapping $\calS$ which assigns to every
  point $x\in X$ the germ $\calS_x$ of some closed subset of $X$ which is locally  
  induced by a decomposition. This means that for every $x$ in $X$, there is a 
  neighborhood $U$ of $x$ and decomposition $\calZ|_U$ of $U$ such that for
   every point $y\in U$, the germ $\calS_y$ agrees with the germ of the piece
  $S|_U\in \calZ|_U$ that contains $y$. A topological space $X$ together
  with a stratification $\calS$ is called a stratified space.
\end{definition}

Every decomposition $\calZ$ of a topological space $X$ naturally
defines a stratification by assigning to every point $x$ the germ
$\calS_x$ of the piece in $\calZ$ that contains $x$. One can show
that every stratified space $(X, \calS)$ has a unique decomposition
$\calZ_\calS$ such that for any open subset $U$ of $X$ and any
decomposition $\calZ$ over $U$ the restriction of $\calZ_\calS$ is
coarser than any $\calZ$. This decomposition is called the {\em
canonical decomposition} of $(X, \calS)$, and a piece of $\calS$ is
called a {\em stratum} of $X$.

Let $(X, \calS)$ be a stratified space and $\calZ_\calS$ the family of its
strata. A singular chart of class $\calC^m$ with $m\in \mathbb{N}\cup \{\infty\}$
is a homeomorphism $\varphi:U\to \varphi(U) \subset \R^n$ from an open set $U$ of $X$
to a locally closed subspace of $\mathbb{R}^n$ such that for every stratum
$S\in \calZ_\calS$ the image $\varphi(U\cap S)$ is a submanifold of
$\mathbb{R}^n$ and the restriction $\varphi_{U\cap S}:U\cap S\to \varphi(U\cap S)$
is a diffeomorphism of class $\calC^m$.

Two singular charts $\varphi_1:U_1\to \varphi_1(U_1) \subset \R^{n_1}$,
$\varphi_2:U_2\to \varphi_2(U_2) \subset \R^{n_1}$ of a stratified space are
called {\em compatible}, if for every $x\in U_1\cap U_2$ there exist $n\geq n_1,n_2$,
an open neighborhood $U_x$ of $x$ with $U_x \subset U_1 \cap U_2$, open neighborhoods
$O_1\subset \mathbb{R}^n$ and
$O_2\subset \mathbb{R}^n$ of $\varphi_1(U_x) \subset \R^{n_1} \subset \R^n$ and
$\varphi_2(U_x)\subset \R^{n_2} \subset \R^n$, together with a
diffeomorphism $h:O_1\to O_2$ of class $\calC^m$ such that
${\varphi_2}_{|U_x}=h\circ {\varphi_1}_{|U_x}$. 
Hereby, we regard $\R^{n_i}$, $i=1,2$ to be embedded
into $\R^n$ via the first $n_i$ coordinates.

Analogously to the standard definition of an atlas for manifold, one can
introduce the notion of a singular atlas for a stratified space.
\begin{definition}\label{dfn:smooth}
  A set of pairwise compatible singular charts of class $\calC^m$ for a stratified
  space $(X,\calS)$ is called a $\calC^m$-atlas for $X$, if the domains of the singular
  charts cover $X$.  A maximal atlas of  singular  charts of class $\calC^m$ for
  $(X,\calS)$ is called a $\calC^m$-structure of $X$. When $m=\infty$, we call it a
  smooth structure on $X$.
\end{definition}
\begin{example}\label{ex:action}
  Let $G$ be a Lie group acting properly on a smooth manifold $M$. For
  $x\in M$, $G_x$ is the isotropy group of $G$ action at $x$. It is
  easy to see that if $x$ and $y$ are in the same orbit of $G$ action,
  then $G_x$ and $G_y$ are conjugate to each other. Define the
  {\em orbit type} of an orbit of the $G$ action to be the conjugacy class
  of $G_x$ for $x$ in the orbit. For a closed subgroup $H< G$,
  $M_{(H)}$ is the subset of $M$ consisting of all $x$ such that $G_x$
  is conjugate to $H$. It is not difficult to show
  \cite[4.3.7, 4.3.11]{pf:book} that the map $x\mapsto M_{(G_x)}$ defines a
  stratification of $M$ by orbit types, and the map
  $G\cdot x\mapsto M_{(G_x)}/G$ defines a stratification of $M/G$. In fact, 
  the connected components of the orbit types $M_{(H)}$ and $M_{(H)}/G$
  define canonical decompositions of $M$ and $M/G$ inducing this stratification, 
  cf.\ Section \ref{pga} 
\end{example}

Given a stratified space $(X,\calS)$ with a smooth structure, one can construct a
structure sheaf $\calC^\infty_X$  of smooth functions as follows. Let $\calU\subset X$
be open, and define $\calC^\infty_X (U) $ as the space of all continuous maps
$f : U \rightarrow \R$ which have the property that for every singular chart
$\chx: V \rightarrow \R^n$ with $U\cap V \neq \emptyset $ there exists a smooth map
$\tilde f : \tilde V \rightarrow \R$ defined on some open set $\tilde V \subset \R^n$
such that $\chx (U\cap V) \subset \tilde V$ and $f_{|U\cap V} = \tilde f \circ \chx$.
Obviously, the spaces $\calC^\infty_X (U) $ form the section spaces of a sheaf
$\calC^\infty_X$ on $X$. The pair $(X,\calC^\infty_X)$ then becomes a so-called
(\emph{reduced}) \emph{differentiable space} (see \cite{GonSalDS}),
in particular Sec.~3.2, for the definition and more information on 
differentiable spaces).
Moreover, the components of a singular chart on $X$ are
elements of some sectional space $\calC^\infty_X (U) $, hence the smooth structure can be
recovered from the structure sheaf $\calC^\infty_X$. That is why we often denote a
smooth structure on a stratified space just by its structure sheaf.  
Let us also note that a sheaf $\calA$ of continuous functions  on $X$  is the sheaf 
of smooth functions of a smooth structure on $X$ if and only if 
the following holds true:
\begin{itemize}
\item $(X,\calA)$ is a differentiable space,
\item  the restriction of 
$\calA$ to each stratum recovers the sheaf of smooth functions on that manifold.
\end{itemize}

For a stratified space $X \subset \R^n$, Mather has introduced the notion of
control data in \cite{Mat:NTS}. Here we need the extension of this concept to
stratified spaces with a smooth structure as provided in \cite[Sec.~3.6]{pf:book}.
To this end let us first recall the notion of a tubular neighborhood
in the sense of \cite{Mat:NTS}.
\begin{definition}
\label{DefTuUmg}
  Let $M$ be a smooth manifold. By a {\it tubular neighborhood} of an embedded
   submanifold
  $S \subset M$ one understands a triple $\sfT =
  (E,\varepsilon,\varphi)$, where $\pi^E : E\rightarrow S$ is a
  smooth vector bundle over $S$ with scalar product $\eta$,
  $\varepsilon: S\rightarrow \R^{>0}$ a continuous map, and $\varphi$ a
  diffeomorphism from $\Tu_{S, E}^\varepsilon :=\big\{ v \in E \mid
  \|v\|^2  = \eta(v,v) < \varepsilon (\pi^E(v))\big\}$ onto an open neighborhood
  $\Tu_S$ of $S$, the so-called {\it total space} of $\sfT$, such that the
  diagram
  \begin{equation}
  \xymatrix{
    \Tu_{S, E}^\varepsilon  \ar[rd]^-{\varphi} \\
    S \ar[u] \ar[r] & M
  }
  \end{equation}
  commutes. The map $\pi_S := \pi^E \circ \varphi^{-1} : \Tu_S \rightarrow S $ is
  called the {\it projection} of the tubular neighborhood, the function
  $\varrho : \Tu_S \rightarrow \R^{>0}$, $p \mapsto  \| \varphi^{-1} (p) \|^2$ its
  {\it tubular function}.

  If $f: M \rightarrow N$ is smooth mapping, then one calls
  a tubular neighborhood $\sfT$ of $S \subset M $
  {\it compatible} with $f$, if $ f \circ \pi_S = f_{|\Tu}$.
\end{definition}

  In case $(M,\eta)$ is a riemannian manifold, and $S\subset M$ a submanifold,
  there is a continuous $\varepsilon : S \rightarrow \R_{>0}$ such that the
  metric exponential map defines a tubular neighborhood $(NS,\varepsilon,\varphi)$
  of $S$ in $M$. Hereby, $\pi^{NS} :NS \rightarrow S$ is the normal bundle of $S$
  in $M$, and $\varphi : =
  \exp_{|\Tu^\varepsilon_{S,NS}}:\Tu^\varepsilon_{S,NS} \rightarrow \Tu_S :=
  \exp \big(\Tu^\varepsilon_{S,NS}\big)$. We will then call $(NS,\varepsilon,\varphi)$
  or for short $\Tu_S$ a metric tubular neighborhood of $S$ in $M$.
  \vspace{2mm}

  The following result guarantees the existence of tubular neighborhoods 
  and is a variant of \cite[Prop.~6.2]{Mat:NTS} and \cite[Thm.~5.1]{w:linear2}.
\begin{theorem}
\label{thm:mettub}
  Let $M$, $Q$, and $R$ smooth manifolds,
  $f: M \rightarrow Q $ a submersion, and $q\in Q$ a point in the
  image of $f$. Let $S := f^{-1} (q)$ and assume that
  $h: M \rightarrow R$ is a smooth map such that $h_{|S}$ is a
  submersion. Then there is a continuous
  $\varepsilon : S \rightarrow \R_{>0}$ and a tubular neighborhood
  $(E,\varepsilon,\varphi)$ of $S$ in $M$ which is compatible with $h$,
  that means the compatibility relation
  \begin{equation}
   \label{eq:comp}
    h \pi_S (p) = h (p)
  \end{equation}
  holds true for all $p \in \Tu^\varepsilon_S$, where $\pi_S :\Tu^\varepsilon_S
  \rightarrow S$ is the projection of the tubular neighborhood.
  Moreover, after possibly shrinking $\varepsilon$ one can even achieve that
  $(\pi_S ,f_{|\Tu^\varepsilon_S}) : \Tu^\varepsilon_S \rightarrow S \times
  f(\Tu^\varepsilon_S)$ is an open embedding.

  In case $G$ is a compact Lie group which acts on $M$ and $Q$ such that
  $f$ is $G$-equivariant and $h$ is $G$-invariant, then
  $\varepsilon$ can be chosen to be $G$-invariant. Moreover, $\varphi$ and
  $\pi_S$ are then equivariant with respect to the natural $G$-action on $E$.
\end{theorem}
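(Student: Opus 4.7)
The plan is to build $\varphi$ as an exponential-type map adapted to the level sets of $h$, so that compatibility with $h$ is automatic. Since $h_{|S}$ is a submersion, the full map $h$ is a submersion on some open neighborhood of $S$ in $M$, which we may take as $M$ after shrinking; the level sets $h^{-1}(r)$ then foliate this neighborhood, and $S$ meets each fiber transversally.

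The first concrete step is to exhibit a smooth subbundle $E\subset TM_{|S}$ with $TS \oplus E = TM_{|S}$ and $E \subset \ker(dh)_{|S}$. Submersivity of $h_{|S}$ gives $\operatorname{rank}(\ker(dh)_{|S} \cap TS) = \dim S - \dim R$, while submersivity of $h$ near $S$ gives $\operatorname{rank}(\ker(dh)_{|S}) = \dim M - \dim R$; a rank count then shows $TS + \ker(dh)_{|S} = TM_{|S}$, and any smooth splitting (via partition of unity, or via orthogonal projection once a metric is chosen) produces such an $E$.

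Next, pick a riemannian metric $\eta$ on $M$, let $\pi : E \to S$ denote the bundle projection, and for $v \in E$ set $\varphi(v) := \exp^{L}_{\pi(v)}(v)$, where $L := h^{-1}(h(\pi(v)))$ and $\exp^L$ is the intrinsic riemannian exponential of this leaf with respect to the induced metric. Since $E_s \subset T_s L$, the image $\varphi(v)$ stays in $L$, so $h(\varphi(v)) = h(\pi(v))$. Smoothness in $v$ follows from the fact that the foliation of $h$-fibers near $S$ varies smoothly. Because $d\varphi$ along the zero section is the identity, a standard argument using a locally finite refinement of charts and a corresponding shrinking of $\varepsilon$ upgrades $\varphi$ to a diffeomorphism onto an open neighborhood of $S$, giving a tubular neighborhood compatible with $h$.

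For the last claim, $S = f^{-1}(q)$ with $f$ a submersion gives $\dim M = \dim S + \dim Q$, so $(\pi_S, f_{|\Tu})$ has source and target of equal dimension; on the zero section its differential splits as $\id_{TS} \oplus df_{|E}$, which is injective since $E \cap \ker(df)_{|S} = E \cap TS = 0$, so a further shrinking of $\varepsilon$ yields an open embedding. For the equivariant statement, average $\eta$ and $\varepsilon$ over $G$ and take $E$ as the $\eta$-orthogonal complement of $TS$ inside $\ker(dh)_{|S}$, which is $G$-invariant because both $h$ and $\eta$ are; the exponential of a $G$-invariant metric is then $G$-equivariant and the construction of $\varphi$ becomes equivariant. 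The main technical obstacle I foresee is establishing the smooth dependence of $\varphi$ on $s$, which hinges on realizing the fibers of $h$ near $S$ as a smooth bundle of submanifolds so that the intrinsic leaf exponentials depend smoothly on the transverse parameter; this is the point that requires the most care in a full write-up.
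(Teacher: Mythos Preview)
Your approach is correct and is essentially an explicit rendering of Mather's construction of compatible tubular neighborhoods. The paper's own proof is almost entirely by citation: it invokes \cite{Mat:NTS} and \cite[Sec.~3.5]{pf:book} for the existence of the compatible tubular neighborhood, \cite{Kan:} for the equivariant version, and only spells out the dimension count showing that $(T\pi_S,Tf)_{|T_{|S}M}:T_{|S}M\to TS\times T_qQ$ is a bundle isomorphism --- which is exactly your argument that $E\cap\ker(df)_{|S}=E\cap TS=0$ rephrased. So the two proofs agree in substance; you have simply unpacked what the references contain, choosing the leafwise exponential as the concrete mechanism for staying inside the fibers of $h$. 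The technical point you flag (smooth dependence of the leafwise exponential on the transverse parameter) is real but routine: near $S$ the submersion $h$ gives local product charts $U\cong V\times W$ in which the leaves are $V\times\{w\}$, so the family of induced metrics and hence of leaf exponentials depends smoothly on $w$.
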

\begin{proof}
  The existence of the tubular neighborhood $(E,\varepsilon,\varphi)$
  satisfying the compatibility condition follows
  from \cite{Mat:NTS} (see also \cite[Sec.~3.5]{pf:book}).
  By dimension counting, one checks that
  $\big( T\pi_S , Tf \big)_{|T_{|S}M} : T_{|S}M \rightarrow TS \times T_qQ$
  is an isomorphism of vector bundles, hence after shrinking $\varepsilon$
  appropriately,  $(\pi_S ,f_{|\Tu^\varepsilon_S}) : \Tu^\varepsilon_S
  \rightarrow S \times f(\Tu^\varepsilon_S)$ is an open embedding indeed.

  In the $G$-equivariant case the existence of equivariant tubular
  neighborhoods with the required properties follows from equivariant versions
  of the existence proof for tubular neighborhoods from \cite{Mat:NTS} or
  \cite[Sec.~3.5]{pf:book} or directly from \cite{Kan:}.
\end{proof}

  Now we have the means to define the notions of a tube for a stratum and of
  control data for a stratified space.
\begin{definition}
  Let $(X,\calS)$ be a stratified space, and $S \in \calS$
  one of its strata. By a \textit{tube} of $S$ in $X$ one understands a
  triple $\sfT_S = (\Tu_S, \pi_S, \varrho_S)$ satisfying the following conditions:
\begin{enumerate}[{\rm (TB1)}]
\item
  $\Tu_S$ is an open neighborhood of $S$ in $X$ such that for every
  other stratum $ R \in \calS$ the relation
  $\Tu_{S,R} := \Tu_S \cap R \neq \emptyset$ implies $R \geq S$.
\item
  $\pi_S : \Tu_S \rightarrow S$ is a continuous retraction
  of $S$ such that for every stratum $R > S $ the restriction
  $\pi_{S,R}:= {\pi_S}_{|\Tu_{S,R}} : \Tu_{S,R}\rightarrow S$
  is smooth.
\item
  $\varrho_S : \Tu_S \rightarrow \R^{\geq 0}$ is a continuous
  mapping such that $\varrho_S^{-1} (0) = S$ holds true and such that
  for every stratum $R >S $ the restriction
  $\varrho_{S,R} := {\varrho_S}_{|\Tu_{S,R}} :
  \Tu_{S,R} \rightarrow \R^{\geq 0} $
  is smooth.
\item
  The mapping
  $(\pi_{S,R},\varrho_{S,R}) : \Tu_{S,R} \rightarrow
  S \times \R^{> 0}$
  is a submersion for every pair of strata $R>S$.
\end{enumerate}
  If the stratified space $( X,\calS)$ carries a smooth structure $\calC^\infty$,
  then one calls a tube $\sfT_S = (\Tu_S,\pi_S,\varrho_S)$ of $S$ as being of
  \textit{class} $\calC^\infty$ or \textit{smooth},  if the following axiom is satisfied:
\begin{enumerate}[{\rm (TB1)}]
\setcounter{enumi}{4}
\item
  There exists a covering of $S$ by singular charts
  $\chx : U \rightarrow \R^n$ of $X$ such that
  $\pi_S$ and $\varrho_S$ are induced by tubular neighborhoods
  $\sfT^\chx=(E^\chx,\varepsilon^\chx,\varphi^\chx)$ of $\chx (S\cap U) \subset
  \R^n$ in the sense that
  \begin{equation}
  \begin{split}
  \nonumber
      \pi^\chx  \circ \chx =  \chx \circ {\pi_S}_{|U}, \quad \text{and} \quad
      \varrho^\chx  \circ \chx  = {\varrho_S}_{|U},
  \end{split}
  \end{equation}
  where $\pi^\chx$ is the projection and $\varrho^\chx$ the tubular function
  of $\sfT^\chx$. Note that in this case the functions $\pi_S$ and $\varrho_S$ are
  $\calC^\infty$-mappings.
\end{enumerate}
\end{definition}

\begin{definition}
\label{DefKoDat}
 Given a stratified space $(X,\calS)$, {\it control data} for $(X,\calS)$ consist of a
 family $\left( \sfT_S \right)_{S \in \calS}$ of tubes
 $\sfT_S = (\Tu_S, \pi_S, \varrho_S)$ such that for every pair
 of strata  $R > S$ and all  $ x \in \Tu_S \cap \Tu_R $ with
 $\pi_R (x) \in \Tu_S$ the  control conditions
\begin{enumerate}[{\rm (CD1)}]
\item
\label{KBPiPi}
  $  \pi_S \circ \pi_R (x)  = \pi_S (x)$,
\item
\label{KBRhoPi}
  $  \varrho_S \circ \pi_R (x)  = \varrho_S (x)$,
\end{enumerate}
are satisfied. A stratified space on which some control data exist
is called a {\it controllable} space. If $(X,\calS)$ carries additionally a smooth
structure and if all tubes of the control data are smooth, then $(\sfT_S)_{S\in \calS}$
are called {\it smooth control data}.
\end{definition}
The fundamental existence result for control data which we will later need in this
article goes back to \cite{Mat:NTS}. It essentially states that existence of
control data is guaranteed under the condition that a local condition, namely
Whitney's condition B is satisfied (see \cite[\S 1 \& \S 2]{Mat:NTS} or
\cite[Sec.~4]{pf:book} for details on the Whitney conditions).
\begin{theorem}[cf.~{\cite[Prop.~7.1]{Mat:NTS} and \cite[Thm.~3.6.9]{pf:book}}]
\label{SaExKoDaWhi}
  Let $(X,\calS)$ be a stratified space with a smooth structure and assume that
  it satisfies Whitney's condition B in every singular chart. Then $(X,\calS)$
  possesses a system of smooth control data $(\sfT_S)_{S\in\calS}$.
  Moreover, if $f: X\rightarrow N$ is a smooth stratified submersion to a smooth
  manifold  $N$ then the control data $(\sfT_S)_{S\in\calS}$
  can be chosen to be compatible with $f$.
\end{theorem}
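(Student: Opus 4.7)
The plan is to reduce the statement to Mather's classical construction of control data for Whitney B stratified subsets of $\R^n$ via the singular atlas, and then patch the resulting local tubes into a global system. I would order the strata of $\calS$ by depth, with open strata having depth zero, and proceed by induction on depth, constructing the tubes $\sfT_S=(\Tu_S,\pi_S,\varrho_S)$ one depth level at a time.

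For the base step, minimal (open) strata require no tube. For the inductive step, assume tubes $\sfT_R$ satisfying (TB1)--(TB5) and (CD1)--(CD2) have been constructed for all strata $R$ of depth strictly less than that of $S$. Cover $S$ by singular charts $\chx_i:U_i\to\R^{n_i}$ compatible with the smooth structure on $X$. Since Whitney's condition B holds in every singular chart by hypothesis, the image of the trace of $\calS$ inside $\R^{n_i}$ is a Whitney B stratified subset, and Mather's original argument in \cite[\S 7]{Mat:NTS} produces a tubular neighborhood $\sfT^{\chx_i}=(E^{\chx_i},\varepsilon^{\chx_i},\varphi^{\chx_i})$ of $\chx_i(S\cap U_i)$ in $\R^{n_i}$ whose projection $\pi^{\chx_i}$ and tubular function $\varrho^{\chx_i}$ are controlled (in the sense of (CD1)--(CD2)) with respect to the already constructed tubes on strata $R<S$. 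Pulling back via $\chx_i$ yields a local smooth tube for $S$ in the sense of (TB5).

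To merge these local tubes into a single tube $\sfT_S$, I would choose a smooth partition of unity $\{\chi_i\}$ on $S$ subordinate to $\{U_i\}$, average the local tubular functions to obtain $\varrho_S$, and combine the local projections into a global retraction $\pi_S$ by the Thom--Mather technique of flowing along controlled vector fields whose existence is guaranteed by Whitney's condition B. After possibly shrinking the radii $\varepsilon^{\chx_i}$, the resulting tube $\sfT_S$ satisfies (TB1)--(TB5). The preservation of (CD1) and (CD2) under the averaging is the technical heart of the proof, and the principal obstacle: one has to show that the difference of any two local retractions is tangent to the previously controlled strata, which is precisely the content of Mather's analysis and the place where Whitney B enters in an essential way.

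Finally, to arrange compatibility of the control data with a smooth stratified submersion $f:X\to N$, I would replace in every step above the use of an unconstrained tubular neighborhood by Theorem \ref{thm:mettub}. That theorem produces, inside each singular chart, a tubular neighborhood of $\chx_i(S\cap U_i)$ compatible with a smooth extension of $f_{|S}$ on $U_i$, so that $f\circ\pi_S=f_{|\Tu_S}$ holds stratum by stratum and hence globally on $\Tu_S$. The smooth-structure refinement over Mather's original statement follows because the singular charts already encode the $\calC^\infty$-structure of $X$, so all pulled-back and averaged objects are automatically of class $\calC^\infty$; this is the route taken in \cite[Thm.~3.6.9]{pf:book}.
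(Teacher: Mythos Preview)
The paper does not give its own proof of this theorem; it is quoted directly from \cite[Prop.~7.1]{Mat:NTS} and \cite[Thm.~3.6.9]{pf:book} with no proof environment following. So there is nothing in the paper to compare your argument against beyond the cited references themselves.

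Your sketch nonetheless contains a genuine internal inconsistency in the induction. You fix the convention that open (i.e., maximal) strata have depth zero and assume inductively that tubes have been constructed for all strata of depth strictly less than that of $S$; in your convention these are precisely the strata $R>S$. In the very next sentence, however, you require the local tube of $S$ to be controlled ``with respect to the already constructed tubes on strata $R<S$'' --- but strata $R<S$ have \emph{larger} depth and have therefore \emph{not} yet been constructed. The induction in Mather's argument, and in the smooth refinement of \cite{pf:book}, runs the other way: one proceeds by increasing dimension (equivalently, starting from the deepest strata), and when building the tube $\sfT_R$ for a stratum $R$ one arranges $\pi_S\circ\pi_R=\pi_S$ and $\varrho_S\circ\pi_R=\varrho_S$ for each already-handled $S<R$. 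This is the direction in which the control conditions \textup{(CD1)}--\textup{(CD2)} can naturally be imposed, because one is free to adjust $\pi_R$ while $\pi_S$ stays fixed. Your phrase ``minimal (open) strata'' also conflates two opposite notions: minimal strata in the incidence order are the closed ones at the bottom, not the open ones at the top.

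A smaller point: the local-to-global patching of tubes in Mather's proof is not carried out by averaging tubular functions with a partition of unity and then flowing along controlled vector fields; it relies instead on the uniqueness of tubular neighborhoods up to isotopy to extend a tube already given over part of the stratum. Controlled vector fields appear in Thom's isotopy lemmas, which come \emph{after} the construction of control data, not in it. Your use of Theorem~\ref{thm:mettub} to obtain $f$-compatible tubes in each chart is, on the other hand, the correct ingredient for the final clause.
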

\begin{remark}
\label{Rem:LocCon}
 Note that a stratified space with a smooth structure which fulfills Whitney's 
 condition B is locally contractible (see \cite{pf:orbit}). This fact is later 
 needed for the proof of a de Rham theorem for basic cohomology over orbit spaces of 
 proper Lie groupoids. 
\end{remark}
\section{Proper Lie groupoids}
\label{sec:linearization}
Assume to be given a proper Lie groupoid $\sfG$. Sometimes, we will denote $\sfG$
by $s,t : \sfG_1 \rightrightarrows \sfG_0$ with $s$ and $t$ being its source and
target map, respectively. The unit map of a groupoid $\sfG$ will be 
denoted by $u: \sfG_0 \rightarrow \sfG_1$. For any subset $B\subset \sfG_0$, we write
$\sfG_{|B}$ for the restriction
$(\sfG_{|B})_1 := s^{-1} (B) \cap t^{-1} (B)  \rightrightarrows B$
with the induced source and target maps. If $B$ is a closed submanifold of
$\sfG_0$, then $\sfG_{|B}$ is again a proper Lie groupoid. For each point
$p \in \sfG_0$ denote by
\[
  \calO_p := \{ t(g) \in \sfG_0 \mid  g \in  \sfG_1 \text{ and } p = s(g)\}
\]
the orbit through $p$. Obviously, the orbits give rise to a partition of the
object space $\sfG_0$. Consider the quotient space $X= \sfG_0/ \sfG$
of all orbits with the quotient topology and the natural projection
$\orbproj :\sfG_0\rightarrow X$. According to \cite[Prop.~2.2]{Zun},
$X$ is a Hausdorff topological space, and each orbit $\calO_p$ is a closed
submanifold of $\sfG_0$.  In this article, we will in particular construct a
natural stratification of $X$. As a preparation we first recall the following result.
\begin{proposition}[{cf.~\cite[Prop.~2.2]{Zun}}]
  Given a proper Lie groupoid $\sfG$ the following properties hold true:
  \begin{enumerate}
  \item
    If $\sfH$ is a Hausdorff Lie groupoid which is Morita equivalent to $\sfG$,
    then $\sfH$ is also proper.
  \item
    If $N$ is a submanifold of $\sfG_0$ which intersects an orbit $\calO_p$ at a
    point $p\in \sfG_0$ transversally, and $Z$ is a sufficiently small open
    neighborhood of $p$ in $N$, then the restriction
    $\sfG_{|Z}$ is a proper Lie groupoid which has $p$ as a fixed point.
  \end{enumerate}
\end{proposition}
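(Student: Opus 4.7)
For the Morita-invariance statement~(1), the plan is to work with a Morita equivalence bibundle $P$ between $\sfG$ and $\sfH$: surjective submersions $\pi_\sfG : P \to \sfG_0$ and $\pi_\sfH : P \to \sfH_0$ with commuting free and proper actions of $\sfG$ and $\sfH$ such that each moment map realizes the quotient by the opposite action. Passing to the associated linking groupoid over $\sfG_0 \sqcup \sfH_0$, or equivalently factoring the Morita equivalence through pullbacks along surjective submersions, reduces the problem to the following assertion: if $f : M \to \sfG_0$ is a surjective submersion and $\sfG$ is proper, then the pullback $f^* \sfG$ is proper. For $(f^*\sfG)_1 = M \times_{\sfG_0} \sfG_1 \times_{\sfG_0} M$ with $(s,t)$ given by the outer projections, the preimage of a compact $K \subset M \times M$ embeds as a closed subset of $K \times (s_\sfG, t_\sfG)^{-1}\bigl((f\times f)(K)\bigr)$, which is compact by properness of $\sfG$.

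For statement~(2), the core of the proof is to establish that $(s,t) : \sfG_1 \to \sfG_0 \times \sfG_0$ is transverse to $N \times N$ at every point of the isotropy group $G_p := (s,t)^{-1}(p,p)$. At the unit $u_p$, the splitting $T_{u_p}\sfG_1 \cong T_p\sfG_0 \oplus \ker T_{u_p}s$ produced by the unit section identifies the image of $T_{u_p}(s,t)$ with $\{(v,w) \in T_p\sfG_0 \times T_p\sfG_0 \mid w - v \in T_p\calO_p\}$; the transversality hypothesis $T_pN + T_p\calO_p = T_p\sfG_0$ then gives transversality to $N \times N$ at $u_p$ by a direct splitting argument. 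For an arbitrary $g \in G_p$, a parallel computation (using that the natural linear action $g_*$ of $g$ on $T_p\sfG_0$ preserves the subspace $T_p\calO_p$) shows that the image of $T_g(s,t)$ equals $\{(v,w) \mid w - g_* v \in T_p\calO_p\}$, and the same sum argument again yields transversality at $g$. Since $\sfG$ is proper, $G_p$ is compact, so openness of transversality supplies an open neighborhood $U \supset G_p$ in $\sfG_1$ on which $(s,t)$ remains transverse to $N \times N$.

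To conclude, I take $Z$ to be an open neighborhood of $p$ in $N$ with compact closure. Properness of $\sfG$ forces $s^{-1}(Z) \cap t^{-1}(Z) \subset U$ once $Z$ is small enough: otherwise a nested shrinking sequence would have a limit point in $(s,t)^{-1}(p,p) \setminus U = G_p \setminus U = \varnothing$. On this set transversality makes $(\sfG_{|Z})_1$ an embedded submanifold, a dimension count shows that $s$ and $t$ restrict to submersions onto $Z$, and composition, unit, and inversion restrict smoothly, so $\sfG_{|Z}$ acquires the structure of a Lie subgroupoid. Properness of $\sfG_{|Z}$ is automatic, since for compact $K \subset Z\times Z$ one has $(s,t)^{-1}_{\sfG_{|Z}}(K) = (s,t)^{-1}_{\sfG}(K)$, which is compact. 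The fixed-point claim follows because $N$ is a complementary-dimension transverse slice, so $N \cap \calO_p = \{p\}$ locally, hence $Z \cap \calO_p = \{p\}$ for $Z$ small, and the $\sfG_{|Z}$-orbit of $p$ equals precisely $Z \cap \calO_p$. The principal technical obstacle is the global transversality step on all of $G_p$ together with the compactness-based shrinking of $Z$ into the transversality neighborhood; both steps depend essentially on the properness of $\sfG$.
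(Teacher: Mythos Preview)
The paper does not actually supply a proof of this proposition: it is stated with a citation to \cite[Prop.~2.2]{Zun} and then used as input for the rest of the paper. So there is no ``paper's own proof'' to compare against; you have written an honest proof where the authors simply quote the result.

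That said, your argument is essentially correct and follows a standard line. For (1), the reduction to pullbacks along surjective submersions and the closed-subset-of-a-compact argument is the usual one. For (2), your computation of the image of $T_g(s,t)$ at an arbitrary $g\in\sfG_p$ is right once one notes that any two bisection lifts of $g$ differ on $T_p\sfG_0$ by something landing in $T_p\calO_p$, so the description $\{(v,w)\mid w-g_*v\in T_p\calO_p\}$ is well defined modulo $T_p\calO_p$; the transversality check then goes through exactly as you say. The compactness-of-$\sfG_p$ plus properness shrinking argument is also correct.

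One small point: in the fixed-point step you invoke that ``$N$ is a complementary-dimension transverse slice,'' i.e.\ $T_pN\cap T_p\calO_p=0$, whereas the proposition as stated only asks for $T_pN+T_p\calO_p=T_p\sfG_0$. Without the complementary-dimension hypothesis, $N\cap\calO_p$ can be positive-dimensional near $p$ and then $p$ is \emph{not} a fixed point of $\sfG_{|Z}$ for any $Z$. This is really a gap in the statement (and the paper's later Definition of a slice does impose $T_p\sfG_0=T_pZ\oplus T_p\calO_p$), but you should flag the extra hypothesis explicitly rather than slip it in.
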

Consider now a point $p\in \sfG_0$, and let
$\sfG_p := \{ g \in \sfG_1 \mid s(g) = t(g) = p\}$ be the isotropy group at $p$.
Then, $\sfG_p$ is a compact Lie group which acts on the normal space
$N_p \calO_p:= T_p\sfG_0 / T_p \calO_p$. More generally, if $\calO$ denotes an orbit of
$\sfG$ in $\sfG_0$,  the restriction $\sfG_{|\calO}$ is a proper Lie groupoid which
acts on the normal bundle $N\calO := T_{|\calO}\sfG_0 / T\calO$. Following
\cite{Zun}, let us show the latter which obviously also proves that $\sfG_p$ acts
on $N_p\calO$. So let
$g\in \sfG_{|\calO}$, $s(g)=p$, and $n \in N_p\calO$ a normal vector over $p$.
Then $n$ can be represented by a smooth curve  $\delta :(-\varepsilon,\varepsilon) \rightarrow \sfG_0$.
In other words, this means that $\delta (0)=p$ and
$n=\left. \frac{\partial}{\partial \tau} \delta (\tau) \right|_{\tau =0} + T_p \calO \in N_p\calO$.
Next choose a smooth curve $\gamma : (-\varepsilon,\varepsilon) \rightarrow \sfG_1$ such that
$\gamma (0) = g$ and $ s \circ \gamma  = \delta$. Put
\begin{equation}
\label{rep-normal-bundle}
  g \cdot n := \left. \frac{\partial}{\partial \tau} t \big(\gamma (\tau)\big) \right|_{\tau =0}
  + T_{t(g)} \calO ,
\end{equation}
and check that the thus defined normal vector $g \cdot n\in N_{t(g)}\calO  $ does not depend on the
particular choices of $\delta$ and $\gamma$, and that this defines an action of $\sfG_{|\calO}$
on $ N\calO$ indeed.

Before proceeding, let us briefly recall, cf.~\cite[\S 5.4]{mm}, that a morphism $f:\sfG\to \sfH$ of Lie groupoids is 
said to be a \textit{strong equivalence} or an \textit{isomorphism} if it admits an inverse.
It is said to be a \textit{weak equivalence} if the following two conditions are satisfied:
\begin{itemize}
\item[(ES)] the map $t\circ \operatorname{pr}_1:\sfH_1\times_{\sfH_0}\sfG_0\to\sfH_0$ is a surjective submersion, where
the fiber product is taken over the source map of $\sfH$ and $f$,
\item[(FF)] $\sfG_1$ is a  fiber product  as in the following square:
\[
  \xymatrix{\sfG_1\ar[r]^f\ar[d]_{(s,t)}&\sfH_1\ar[d]^{(s,t)}\\
  \sfG_0\times\sfG_0\ar[r]^{(f,f)}&\sfH_0\times\sfH_0}
\]
\end{itemize}
A weak equivalence $f$ induces a homeomorphism of the
quotient space $\sfG_0\slash\sfG_1\cong\sfH_0\slash\sfH_1$.
\vspace{2mm}

The following result  will be useful for extending the slice theorems for proper Lie 
groupoids by Zung  to not necessarily source-locally trivial proper Lie groupoids. 
\begin{proposition}
\label{Prop:sloctriv}
  Let $\sfG$ be a proper Lie groupoid, $p\in \sfG_0$ a point, and $\calO$ the orbit 
  through $p$. Then  there is an open neighborhood $U$ of $p$ such that the 
  restricted groupoid $\sfG_{|U}$ is source-locally trivial and such that the orbit in 
  $\sfG_{|U}$ through $p$, i.e.~the restricted orbit $\calO_{|U} = \calO \cap U$,
  is of finite type. 
\end{proposition}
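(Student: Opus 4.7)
The strategy is to construct a slice at $p$, apply linearization at the fixed point to the slice groupoid, and then extend the resulting action-groupoid structure to a tubular neighborhood of a small piece of the orbit $\calO$.

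Since $\sfG$ is proper, the isotropy $\sfG_p$ is a compact Lie group acting on $T_p\sfG_0$ and preserving $T_p\calO$. Averaging gives a $\sfG_p$-invariant Riemannian metric in a neighborhood of $p$. Let $N$ denote the $\sfG_p$-invariant slice at $p$ obtained by exponentiating a small ball in a $\sfG_p$-invariant complement of $T_p\calO$ in $T_p\sfG_0$. By the preceding proposition from \cite{Zun}, $\sfG_{|N}$ is a proper Lie groupoid with $p$ as a fixed point for $N$ small enough. The linearization theorem at a fixed point of a proper Lie groupoid, which is a consequence of Bochner-type averaging using the compactness of $\sfG_p$ and does not require source-local triviality as a hypothesis, then provides, after shrinking $N$ to a $\sfG_p$-invariant open neighborhood $Z$ of $p$, an isomorphism of Lie groupoids
\begin{equation*}
  \sfG_{|Z} \;\cong\; \sfG_p \ltimes Z,
\end{equation*}
where $\sfG_p$ acts on $Z$ through its isotropy representation on $T_pN$ transported by the exponential. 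This action groupoid of a compact Lie group is manifestly source-locally trivial.

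To extend across the orbit, choose a neighborhood $W \subset \calO$ of $p$, diffeomorphic to an open ball, over which the principal $\sfG_p$-bundle $t\colon s^{-1}(p) \to \calO$ admits a smooth section $\sigma\colon W \to s^{-1}(p)$ with $\sigma(p) = u(p)$. Combining $\sigma$ with the $\sfG$-action on $N\calO$ from \eqref{rep-normal-bundle} trivializes $N\calO|_W \cong W \times N_p\calO$, and exponentiating the normal vectors via the invariant metric produces an open tubular neighborhood $U$ of $W$ in $\sfG_0$ together with a diffeomorphism $W \times Z \xrightarrow{\sim} U$, $(q,z) \mapsto \exp(\sigma(q)\cdot z)$. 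Under this identification, transporting the isomorphism $\sfG_{|Z} \cong \sfG_p \ltimes Z$ along $\sigma$ shows that each arrow of $\sfG_{|U}$ is parametrized by a quadruple $(q,q',k,z) \in W \times W \times \sfG_p \times Z$ that sends the source $(q,z)$ to the target $(q', k\cdot z)$, with the $\sfG_p$-invariance of $Z$ ensuring $k\cdot z \in Z$. The source map of $\sfG_{|U}$ thereby becomes the projection
\begin{equation*}
   W\times W\times \sfG_p \times Z \longrightarrow W\times Z, \qquad (q,q',k,z) \longmapsto (q,z),
\end{equation*}
a trivial fiber bundle with fiber $W\times \sfG_p$. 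Consequently $\sfG_{|U}$ is source-locally trivial, and $\calO\cap U = W$ is of finite type.

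The main technical step is the explicit parametrization of arrows in $\sfG_{|U}$: it rests on the linearization at the fixed point $p$, the global $\sfG$-action on $N\calO$, and the local triviality of the principal $\sfG_p$-bundle $s^{-1}(p)\to \calO$ over $W$. Crucially, the argument avoids invoking the full linearization theorem of Zung near the orbit, which would already require source-local triviality as a hypothesis, and instead uses only the easier linearization at the isolated fixed point of the slice groupoid.
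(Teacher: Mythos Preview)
Your approach is quite different from the paper's, and there is a genuine gap in the central step.

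The paper never invokes linearization to prove this proposition. Instead it argues directly: choosing a small chart ball $U$ around $p$, it shows that $s$ restricted to $N:=t^{-1}(\overline U)\cap s^{-1}(U)$ is a proper surjective submersion from a manifold with boundary, verifies separately that $s|_{\partial N}$ is still a submersion, and then applies Ehresmann's theorem for manifolds with boundary (the appendix) to conclude local triviality of the source map. No slice groupoid, no Zung theorem, no product decomposition of $\sfG_{|U}$ are used.

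In your argument the problematic step is the sentence ``transporting the isomorphism $\sfG_{|Z}\cong\sfG_p\ltimes Z$ along $\sigma$ shows that each arrow of $\sfG_{|U}$ is parametrized by a quadruple $(q,q',k,z)$''. You have built the object-level diffeomorphism $\Phi:W\times Z\to U$, $(q,z)\mapsto\exp(\sigma(q)\cdot z)$, using the normal-bundle action \eqref{rep-normal-bundle}; but that action is only an infinitesimal datum. It does \emph{not} furnish an arrow in $\sfG_1$ from $z$ to $\Phi(q,z)$, and without such arrows you cannot decompose a general $g\in\sfG_{|U}$ as (bisection)$\cdot$(element of $\sfG_{|Z}$)$\cdot$(bisection)$^{-1}$. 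Concretely, there is no reason a priori why $\exp_p(v)$ and $\exp_q(\sigma(q)\cdot v)$ lie in the same $\sfG$-orbit, let alone are joined by a canonical arrow; establishing this is precisely the content of the tubular-neighborhood form of the slice theorem (Theorem~\ref{Thm:SliceThm2} and Corollary~\ref{cor:local}), which in the paper's logic comes \emph{after} Proposition~\ref{Prop:sloctriv}. What would actually be needed is a smooth family of local bisections $b:W\times Z'\to\sfG_1$ with $s(b(q,z))=z$ and $b(q,p)=\sigma(q)$, constructed directly from the submersion $s$; with that in hand your parametrization goes through. But this lift is the real work, and it is absent from your write-up.

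A smaller point: your claim that the fixed-point linearization ``is a consequence of Bochner-type averaging\dots and does not require source-local triviality'' needs a citation. Zung's original argument did assume it; within the paper's framework one first invokes \cite[Prop.~7.3.8]{DufZunPSNF} (source-local triviality near a fixed point) and then runs Zung's proof, or one cites \cite{cs} for a direct argument. Either is fine, but the hand-wave is not.
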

\begin{proof}
  We consider two cases:
  \begin{enumerate}
  \item The orbit $\calO$ is discrete.
   \item The orbit $\calO$ has positive dimension.
  \end{enumerate}
  In the first case, there exists an open neighborhood $W \subset \sfG_0$
  of $p$ such that $\{ p \} = W\cap \calO$. This implies that in
  the restricted groupoid $\sfG_{|W}$ the point $p$ is a fixed point. 
  By \cite[Prop.~7.3.8]{DufZunPSNF}  there then exists an open neighborhood 
  $U\subset W$ of $p$ such that the restricted groupoid   $\sfG_{|U}$ 
  is source-locally trivial. Since $p$ is a fixed point of $\sfG_{|U}$,
  the restricted orbit $\calO_{|U}$ consists only of one point, hence
  is of finite type. This finishes the first case. 

  So let us consider the second case and assume that  $\dim \calO >0$.
  This implies that each orbit through a sufficiently small neighborhood of $p$ 
  has positive dimension as well. Let us briefly check this. Assume that 
  $p_n \in \sfG_0$ is a sequence of points converging to $p$ such that for every 
  $n\in \N$ the orbit through $p_n$ is discrete. Then $\sfG_{p_n} $ is open and 
  closed in $s^{-1} (p_n)$ for every $n$. Now let $V$ be an open connected neighborhood 
  of $u(p)$ in $\sfG_1$ such that $V \cap s^{-1} (q)$ is connected for every $q$ in a 
  neighborhood of $p$. Observe that $V \cap s^{-1} (p_n) = V \cap \sfG_{p_n} $ for every 
  $n$. Let $g\in V \cap s^{-1} (p)$. Then there exists a sequence
  $g_n \in V \cap \sfG_{p_n}$ converging to $g$. Moreover,
  \[
    s(g) = \lim_{n\rightarrow \infty} s(g_n) = \lim_{n\rightarrow \infty} t(g_n)  = t(g),
  \]
  hence $g \in \sfG_p$. By dimension reasons this implies that the orbit $\calO$ has
  to be discrete which contradicts the assumption on $\calO$. 
  Hence there exists an open neighborhood $W$ of $p$ in $\sfG_0$ such that for every $q\in W$ 
  the orbit $\calO_q$  through $q$ has positive dimension. 
  After possibly shrinking $W$ we can assume that there is a smooth chart 
  $\chx: W \rightarrow B \subset \R^{\dim \sfG_0}$ to an open ball $B$ 
  around the origin such that  $\chx (p)=0$.
  By proper choice of $W$ and the chart $\chx$ one can even achieve that 
  $\chx^{-1} (\R^{\dim \calO} \cap B) = W \cap \calO$, where $\R^{\dim \calO}$ is 
  embedded into $\R^{\dim \sfG_0}$  via the first $\dim \calO$ coordinates.
  Put $U := \chx^{-1} (\frac 12 B )$. The closure $\overline{U}$ then is a manifold with 
  boundary and is diffeomorphic to a closed ball via the chart $\chx$. 
  The restricted target map $t_{|s^{-1} (U)} : s^{-1} (U) \rightarrow \sfG_0$ is a 
  submersion, hence the preimage of  $\overline{U}$ under $t_{|s^{-1} (U)}$ is a 
  submanifold  of $s^{-1} (U)$ with boundary. 
  But this preimage coincides with $N:=t^{-1} (\overline{U}) \cap s^{-1} (U)$. Moreover, 
  the restriction $s_{|N}$ is a submersion, and each of its fibers 
  $t^{-1} (\overline{U}) \cap s^{-1} (q)$,
  $q\in U$ is a compact manifold with boundary by properness of $\sfG$, and since 
  $t_{|s^{-1} (q)} :  s^{-1} (q) \rightarrow \calO_q$ is a submersion.
  
  Let us show that  $s_{|N} : N \rightarrow U $ is a proper map. To this end choose
  some compact subset $K\subset U$, and check that 
  $ s_{|N}^{-1} (K) = t^{-1} (\overline{U}) \cap s^{-1} (K)$. Since 
  $(s,t):\sfG_1 \rightarrow \sfG_0\times \sfG_0$ is a proper map 
  by assumptions on $\sfG$ 
  and since $\overline{U} \times K$ is a compact subset of $\sfG_0\times \sfG_0$,
  one concludes that  $ s_{|N}^{-1} (K)$ is compact, hence $ s_{|N}$ is proper 
  indeed. 

  Finally,  $s_{|N} : N \rightarrow U $ is surjective, since 
  $u (U) \subset t^{-1} (\overline{U}) \cap s^{-1} (U)$.
  Hence   $s_{|N}: N \rightarrow U$ is a surjective 
  proper submersion from a  manifold with boundary to a manifold without boundary. 
  By Ehresmann's Theorem for manifolds with boundary \ref{ThmEhresmannMfdBdry},
  $s_{|N}$ has to be locally trivial with typical fiber given by the manifold with 
  boundary $t^{-1} (\overline{U}) \cap s^{-1} (p)$, if we can yet show that 
  the restriction $s_{|\partial N}$ is a submersion, too. 
  Let us prove that this is the case. To this end it 
  suffices to show that $T_g \partial N + \ker T_gs = T_g\sfG_1$ for all 
  $g \in \partial N$.  
  Choose a (local) bisection $\sigma : V \rightarrow \sfG_1$ around an open neighborhood $V$ 
  of $t(g)$ such that $\sigma (t(g)) =g^{-1}$. This means that $s\circ \sigma = \id_{V}$ 
  and that $t\circ \sigma$ is a diffeomorphism from $V$ onto  its image. 
  Let $\overline{\sigma}$ be the map $V \ni q \mapsto (\sigma(q))^{-1} \in \sfG_1$. 
  Then $\overline{\sigma} (t(g))=g$, and $T_{t(g)}\overline{\sigma} (T_{t(g)} \partial \overline{U})$  
  is a subspace of $T_g \partial N$ which is mapped isomorphically onto 
  $T_{t(g)} \partial \overline{U}$ under the map $T_gt$.  
  Observe that this implies  
  $T_g \partial N =  T_{t(g)}\overline{\sigma} (T_{t(g)} \partial \overline{U}) \oplus \ker T_g t$.
  Since $s\circ \overline{\sigma}$ is a diffeomorphism, 
  $T_g s$ is injective on $T_{t(g)}\overline{\sigma} (T_{t(g)} \partial \overline{U})$.
  Hence the claim is proved, if we can show that $\ker T_g s \cap \ker T_g t$
  is a proper subspace of $\ker T_g t$. Let us prove this by contradiction, and assume that 
  $\ker T_g s = \ker T_g t$. Since $T_gt$ maps $\ker T_g s$ to the image of the anchor map at 
  $t(g)$, the assumption  $\ker T_g s = \ker T_g t$ implies that the image of the anchor map at 
  $t(g)$ is zero. As a consequence, the orbit through  $t(g)$ has to be discrete which contradicts our 
  original assumption. Hence  $\ker T_g s \cap \ker T_g t \subsetneq \ker T_g t$, which finishes the proof that $s_{|\partial N}$ is a submersion.

  Besides $s_{|N}$, the restricted source map 
  $s_{|t^{-1} (U) \cap s^{-1} (U)} : t^{-1} (U) \cap s^{-1} (U) \rightarrow U$
  now has to be locally trivial as well, with typical fiber given by  
  $t^{-1} (U) \cap s^{-1} (p)$. This entails that  $\sfG_{|U}$ is source-locally trivial.  
  Since by construction, $\calO \cap U$ is diffeomorphic to an open ball in some 
  euclidean space, $\calO \cap U$ is a manifold of finite type, which proves the final 
  claim.
\end{proof}
Now let us come to the slice theorem for proper Lie groupoids and recall the two 
original versions by Zung and Weinstein.
\begin{theorem}[{cf.~\cite[Thm.~2.3]{Zun}}]
\label{Thm:SliceThm}
  Any proper Lie groupoid $\sfG$ with a fixed point $p\in \sfG_0$ is locally isomorphic
  to a linear action groupoid, namely the action groupoid of the action of the compact
  isotropy group $\sfG_p$ on the tangent space $T_p\sfG_0$.
\end{theorem}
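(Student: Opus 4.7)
The plan is to exploit three ingredients: properness (which makes the isotropy group $\sfG_p$ compact), the source-local triviality of Proposition~\ref{Prop:sloctriv}, and averaging over $\sfG_p$.

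Since $p$ is a fixed point, the orbit $\calO_p = \{p\}$, so $s^{-1}(p) = t^{-1}(p) = \sfG_p$, and by properness this is a compact Lie group; its linear representation on $T_p\sfG_0 = N_p\calO_p$ from \eqref{rep-normal-bundle} provides the target model. First I would invoke Proposition~\ref{Prop:sloctriv} to shrink $\sfG_0$ to an open neighborhood $U_0$ of $p$ on which $\sfG_{|U_0}$ is source-locally trivial. Since $\sfG_p$ is compact and $p$ is fixed, after possibly shrinking $U_0$ to a smaller neighborhood $U$, there is a global trivialization $\Phi:\sfG_p\times U \to s^{-1}(U)$ with $s\circ\Phi = \pr_2$, which can be normalized so that $\Phi(e,q)=u(q)$ by pre-composition with a smooth local section of $s$ through $u(p)$.

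Next I would push the groupoid multiplication through this trivialization to obtain a smooth map $\alpha:\sfG_p\times U \to \sfG_0$ defined by $\alpha(g,q) = t(\Phi(g,q))$. The critical step is to modify $\Phi$ so that $\alpha$ becomes a genuine smooth $\sfG_p$-action on $U$ (after further shrinking $U$) and so that $\Phi$ becomes a groupoid isomorphism $\sfG_p\ltimes U \to \sfG_{|U}$. This is accomplished by a Haar-averaging argument over $\sfG_p$, using the left multiplication of $\sfG_p$ on its own source fibers to enforce equivariance of $\Phi$; since $p$ is fixed, all relevant groupoid multiplications remain defined in a neighborhood of the unit, which allows the averaging to respect the multiplicative structure.

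Finally, I would apply Bochner's classical linearization theorem to the now-available smooth $\sfG_p$-action on $U$: averaging an arbitrary Riemannian metric on $U$ yields a $\sfG_p$-invariant metric $\eta$, whose exponential map at $p$ produces a $\sfG_p$-equivariant diffeomorphism from an open ball in $T_p\sfG_0$ onto a neighborhood of $p$. Composing this equivariant diffeomorphism with the groupoid isomorphism of the previous step yields the desired local isomorphism between $\sfG$ and the linear action groupoid $\sfG_p\ltimes T_p\sfG_0$.

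The principal difficulty lies in the averaging step: engineering a modified $\Phi$ so that $\alpha$ is a bona fide group action and so that $\Phi$ truly intertwines the groupoid multiplications on both sides, rather than merely being a diffeomorphism of manifolds. The averaging must simultaneously respect the multiplicative structures of $\sfG_p$ and of $\sfG_{|U}$, and properness of $\sfG$ must be invoked to guarantee that the relevant fiberwise multiplications are defined on a common neighborhood throughout the averaging procedure.
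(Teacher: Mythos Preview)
The paper does not supply its own proof of this theorem: it is quoted from \cite{Zun}, and Remark~\ref{Rem:spectub} explains that Zung's original argument implicitly assumed source-local triviality, a gap which Proposition~\ref{Prop:sloctriv} (or \cite[Prop.~7.3.8]{DufZunPSNF}) closes. Your outline follows precisely this route---invoke Proposition~\ref{Prop:sloctriv} to reduce to the source-locally trivial case, then carry out Zung's averaging argument and finish with Bochner's linearization---so your approach coincides with the one the paper references rather than proves.

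One comment on presentation: the averaging step you flag as ``the principal difficulty'' is, in Zung's treatment, organized slightly differently. Rather than averaging the map $\alpha$ directly to force it to be a group action, Zung constructs a smooth groupoid homomorphism (a retraction) $\lambda:\sfG_{|U}\to\sfG_p$ by averaging over $\sfG_p$, and then checks that $(\lambda,s):\sfG_{|U}\to\sfG_p\ltimes U$ is an isomorphism of groupoids; this is the formulation the paper later uses explicitly in Section~\ref{sec:metric-slice}. The two viewpoints are equivalent, but the ``retraction $\lambda$'' formulation makes the compatibility with groupoid multiplication more transparent, since one only needs $\lambda$ to be multiplicative rather than having to verify associativity of $\alpha$ and multiplicativity of $\Phi$ separately.
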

\begin{theorem}[{cf.~\cite[Thm.~9.1]{w:linear2}}] 
\label{Thm:SliceThmZuWei}
  Assume that $\sfG$ is a proper Lie groupoid, and assume
  that $\calO \subset \sfG_0$ is an orbit of finite type, i.e.~diffeomorphic as a manifold to the interior 
  of a compact manifold with boundary. Then there
  is an open neighborhood $U$ of $\calO$ in $\sfG_0$ such that the restriction
  $\sfG_{|U}$ of $\sfG$ to $U$ is isomorphic as a Lie groupoid to the restriction
  of $\sfG_{|\calO} \ltimes N\calO$ to an appropriate  neighborhood of the
  zero section in $N\calO$.
\end{theorem}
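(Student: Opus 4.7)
The plan is to reduce to the single-point linearization provided by Theorem~\ref{Thm:SliceThm} and then spread the resulting slice model along the orbit $\calO$ using a gauge-theoretic identification. The finite-type hypothesis enters precisely to make the patching finite and consistent.

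First, I would choose a point $p\in\calO$ together with a submanifold $N\subset\sfG_0$ through $p$, transverse to $\calO$ and of complementary dimension. After shrinking $N$, the restriction $\sfG_{|N}$ is a proper Lie groupoid with $p$ as a fixed point and isotropy group $\sfG_p$. Theorem~\ref{Thm:SliceThm} then delivers a neighborhood of $p$ in $N$ on which $\sfG_{|N}$ is isomorphic to the linear action groupoid $\sfG_p\ltimes T_pN$, and transversality canonically identifies $T_pN$ with $N_p\calO$. This yields the germ at $p$ of the desired linearization.

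To globalize, I would recast both $\sfG_{|\calO}$ and its linearization in gauge-theoretic form. Set $P:=t^{-1}(\calO)\cap s^{-1}(p)$, with the right action of $\sfG_p$ by pre-composition and projection $t:P\to\calO$. Using properness together with Proposition~\ref{Prop:sloctriv}, which supplies the source-local triviality needed to manufacture local sections, $t:P\to\calO$ becomes a principal $\sfG_p$-bundle. Under the resulting identifications $\sfG_{|\calO}\cong (P\times P)/\sfG_p$ and $N\calO\cong P\times_{\sfG_p}N_p\calO$, the action groupoid $\sfG_{|\calO}\ltimes N\calO$ is the gauge groupoid associated with the $\sfG_p$-equivariant Lie groupoid $\sfG_p\ltimes N_p\calO$ through $P$. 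Constructing the required isomorphism therefore reduces to promoting the Zung model at $p$ to a $\sfG_p$-equivariant one along the fiber $s^{-1}(p)$ inside $\sfG_1$.

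To carry out this promotion, I would cover $\calO$ by opens $U_\alpha$ over which $P$ admits a section $\sigma_\alpha$, and extend each $\sigma_\alpha$ to a local bisection of $\sfG_1$, using the source-local triviality from Proposition~\ref{Prop:sloctriv}. Conjugation by such a bisection transports the slice model at $p$ to a local isomorphism $\sfG_{|V_\alpha}\cong(\sfG_{|\calO}\ltimes N\calO)_{|V_\alpha}$ on a tubular neighborhood $V_\alpha$ of $U_\alpha$ in $\sfG_0$. The finite-type hypothesis on $\calO$ is what allows the cover to be taken locally finite over a compact exhaustion of $\calO$. The main obstacle is to patch these local isomorphisms into one globally defined smooth groupoid isomorphism, since on overlaps they differ by $\sfG_p$-valued transition cocycles; I would handle this by averaging the local isomorphisms against a $\sfG_p$-equivariant partition of unity on $P$, exploiting the compactness of $\sfG_p$ together with the affine structure of the fibers of $N\calO$ to guarantee that the averaged map remains a groupoid morphism, and then shrinking the tubular neighborhood of $\calO$ if necessary to make it an isomorphism onto its image.
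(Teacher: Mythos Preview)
The paper does not give its own proof of this theorem: it simply cites Weinstein \cite[Thm.~9.1]{w:linear2} and Zung, and in Remark~\ref{Rem:spectub} notes that the statement follows directly from those sources. The method the paper actually develops in Section~\ref{sec:metric-slice} (following Weinstein's Steps~2--4) is the relevant comparison, and it is quite different from your proposal: rather than building local models over a cover of $\calO$ and patching them, Weinstein constructs a single global submersive retraction $\Pi:\sfG_{|\Tu^\varepsilon_\calO}\to\sfG_{|\calO}$ by choosing a $\sfG_p$-equivariant tubular neighborhood of $\sfG(-,p)$ inside $\sfG(-,Z_p)$ compatible with $\pi_\calO\circ s$, and then setting $\Pi(\gamma)=\pi_{\sfG(-,p)}(\iota)^{-1}\,\lambda(\iota\gamma\kappa^{-1})\,\pi_{\sfG(-,p)}(\kappa)$ for auxiliary arrows $\iota,\kappa$. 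The point is that this formula is independent of the choices of $\iota,\kappa$, so $\Pi$ is globally defined from the outset and the pair $(\Pi,\exp^{-1}\circ s)$ gives the isomorphism in one stroke; the finite-type hypothesis enters only at the very end, to bound $\varepsilon$ away from zero.

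Your proposal has a genuine gap at the patching step. You correctly identify that the local isomorphisms obtained by conjugating with bisections will differ on overlaps, but the remedy you suggest---averaging them against a $\sfG_p$-equivariant partition of unity, relying on the affine structure of the fibers of $N\calO$---does not work as stated. A convex combination of groupoid homomorphisms is in general not a groupoid homomorphism: the multiplicativity condition $\Phi(gh)=\Phi(g)\Phi(h)$ is not preserved under linear averaging, even when the target is an action groupoid with linear fibers. Moreover, your local isomorphisms $\sfG_{|V_\alpha}\to(\sfG_{|\calO}\ltimes N\calO)_{|V_\alpha}$ are not fiberwise linear on objects (Zung's linearization is a diffeomorphism, not a linear map), so even at the level of object spaces the convex combination need not land in the correct model. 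The compactness of $\sfG_p$ lets you average metrics or connections, but not groupoid morphisms. This is precisely why Weinstein's construction avoids patching altogether and instead produces $\Pi$ by a formula that is manifestly well-defined globally.
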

\begin{remark}
\label{Rem:spectub}  
Theorem \ref{Thm:SliceThm} is nowadays known as Zung's Theorem. Although the  
theorem is stated as above in \cite{Zun}, the arguments in that paper  
actually assume source-local triviality. 
However, Proposition 7.3.8 by Dufour and Zung \cite{DufZunPSNF} 
(see also Proposition \ref{Prop:sloctriv} above) together with Zung's original proof in \cite{Zun} show that 
Theorem \ref{Thm:SliceThm} holds true in full generality. 
Let us also mention that a direct and independent proof of Zung's Theorem
(without the source-local triviality assumption used in the proof) has been given in 
\cite{cs}, to which we also refer for further discussion about this issue.

Theorem \ref{Thm:SliceThmZuWei} is a direct consequence of Zung's Theorem and \cite[Thm.~9.1]{w:linear2}.
It also follows from \cite[Thm.~1]{cs}.

The finite type assumption for the orbit in Theorem \ref{Thm:SliceThmZuWei} is an assumption which might not hold in an intended
application. Nevertheless, for an arbitrary proper Lie groupoid $\sfG$ one can conclude
from  Prop.~\ref{Prop:sloctriv} and Thm.~\ref{Thm:SliceThmZuWei} that for every point $p\in \sfG_0$ 
there is an open neighborhood $U$ of $p$ such that the restriction $\sfG_{|U}$ is source-locally trivial, and
isomorphic  to the restriction of $\sfG_{|O} \ltimes NO$ to an open neighborhood of the zero section in $NO$, where 
$O:= U \cap \calO$, and $\calO$ is the orbit through  $p$. We will make frequent use
of this fact in the remainder of this article. In particular, this
observation will be a crucial ingredient in the proof of the following result.
\end{remark}

\begin{proposition}
\label{equivtofinite}
 For every proper Lie groupoid $\sfG$  there exists an
 open embedding of Lie groupoids $\iota: \sfH \hookrightarrow \sfG$
 such that $\iota$  is a weak equivalence and such that
 every orbit of $\sfH$ is of finite type.
\end{proposition}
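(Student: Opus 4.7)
The plan is to construct an open subset $U \subset \sfG_0$ meeting every orbit of $\sfG$ such that $U \cap \calO$ is of finite type for every $\sfG$-orbit $\calO$, and then to set $\sfH := \sfG_{|U}$. The inclusion $\iota : \sfG_{|U} \hookrightarrow \sfG$ is automatically an open embedding of Lie groupoids; condition (FF) holds trivially for a restriction, and (ES) amounts to the requirement that $U$ meet every orbit. Hence $\iota$ is a weak equivalence, and since orbits of $\sfG_{|U}$ through a point $p \in U$ are precisely the sets $U \cap \calO_p$, the finite type property of $\sfH$ follows.

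For the local step, fix $p \in \sfG_0$ and apply Remark~\ref{Rem:spectub}: there is a neighborhood $W_p$ of $p$ and an isomorphism $\sfG_{|W_p} \cong (\sfG_{|O_p} \ltimes NO_p)_{|E_p}$, where $O_p = W_p \cap \calO_p$ is diffeomorphic to an open ball and $E_p$ is an open neighborhood of the zero section. A $\sfG_{|O_p}$-invariant fiber norm $\|\cdot\|$ on $NO_p$ exists: average an arbitrary norm over the compact isotropy $\sfG_p$ to get a $\sfG_p$-invariant norm on $N_p\calO_p$, then extend by the transitive $\sfG_{|O_p}$-action. Shrinking $E_p$ to the invariant open tube $\{v \in NO_p : \|v\| < \delta_p\}$ for some $\delta_p > 0$, each orbit of the restricted model groupoid is a complete $\sfG_{|O_p}$-orbit in $NO_p$ of norm $<\delta_p$, namely an associated bundle $\sfG_{|O_p} \times_{\sfG_p} (\sfG_p \cdot v)$ with compact $\sfG_p$-orbit fiber. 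Since $O_p$ is contractible the bundle is trivial and diffeomorphic to $O_p \times F$ for a closed manifold $F$, hence of finite type.

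Globally, by \cite[Prop.~2.2]{Zun} the orbit space $X = \sfG_0/\sfG$ is paracompact Hausdorff, so the cover $\{\omega(W_p)\}_{p \in \sfG_0}$ admits a countable locally finite open refinement $\{V_n\}_{n \in \N}$ with $V_n \subset \omega(W_{p_n})$ for chosen points $p_n$. Set $U_n := W_{p_n} \cap \omega^{-1}(V_n)$ and $U := \bigcup_n U_n$, so that $\omega(U) = X$ and $U$ meets every orbit. For an orbit $\calO$ with $\omega(\calO) \in V_n$, one has $\calO \subset \omega^{-1}(V_n)$, whence $U_n \cap \calO = W_{p_n} \cap \calO$, which is of finite type by the previous paragraph. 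Local finiteness of $\{V_n\}$ at $\omega(\calO)$ ensures that only finitely many $n$ contribute, so $U \cap \calO$ is a finite union of finite-type open subsets of $\calO$.

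The main obstacle is showing that this finite union is itself of finite type. To handle it I would arrange that each $\overline{W_{p_n}}$ is compact, corresponding in the model to the closed invariant tube $\{\|v\| \le \delta_{p_n}\}$ over $\overline{O_{p_n}}$, so that $\overline{W_{p_n}} \cap \calO$ is a compact submanifold with boundary inside $\calO$, diffeomorphic to a closed ball bundle over the closed ball $\overline{O_{p_n}}$ with compact fiber. A generic perturbation of the finitely many radii $\delta_{p_n}$ relevant to a given orbit $\calO$ then places the resulting boundary hypersurfaces in general position inside $\calO$, so that $\overline{U \cap \calO}$ is a compact manifold with corners whose interior is $U \cap \calO$; smoothing corners exhibits $U \cap \calO$ as the interior of a compact manifold with boundary, which is of finite type.
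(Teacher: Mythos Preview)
Your overall strategy matches the paper's proof closely: both construct $\sfH_0$ as a union of local tubular pieces (the paper's $U_q$, your $W_{p_n}$, shrunk to invariant tubes), indexed by a countable set whose images in $X$ form a locally finite cover, and both verify that each individual piece meets any $\sfG$-orbit in a manifold of finite type (a trivial bundle over a ball with compact fibre). The paper, however, simply asserts the final step---that the resulting finite union is again of finite type---without argument; you are more scrupulous in flagging this as the main obstacle.

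Your proposed fix by perturbing the radii $\delta_{p_n}$ does not work, for a structural reason: the fibrewise norm you constructed is $\sfG_{|O_{p_n}}$-invariant by design, hence \emph{constant} on each orbit of the local model. Consequently the boundary of $\overline{W_{p_n}}\cap\calO$ inside $\calO$ comes entirely from the ``side'' $\partial\overline{O_{p_n}}\times F$, not from the level set $\{\|v\|=\delta_{p_n}\}$. Varying $\delta_{p_n}$ only changes which orbits enter $W_{p_n}$ at all; it does not move the boundary of the intersection with a fixed orbit $\calO$, and therefore cannot place these hypersurfaces in general position. To run a genuine transversality argument you would instead have to perturb the balls $O_{p_n}$ inside their respective orbits $\calO_{p_n}$, and then argue that a single global choice works for every $\sfG$-orbit simultaneously---a more delicate matter than what you have sketched. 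As written, both your argument and the paper's share this gap; you deserve credit for naming it explicitly, but the patch you offer does not close it.
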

\begin{proof}
  Choose for each point $\calQ$ of the orbit space $X=\sfG_0/\sfG$
  a preimage $q $ under the canonical projection
  $\sfG_0 \rightarrow X$. Then choose a sufficiently small
  neighborhood $B_{q}$ of $q$ in the orbit $\calQ$
  with the following two properties:
  \begin{enumerate}
  \item The closure $\overline{B_{q}}$ is diffeomorphic to a closed bounded
        ball in some euclidean space.
  \item There exists an open neighborhood $U_{q}$
        of $q$ in $\sfG_0$ and an isomorphism of Lie groupoids
        $\varphi_{q} :
        \sfG_{| B_{q}} \ltimes \Tu_{q} \rightarrow \sfG_{|U_{q}}$
        between the restriction of $\sfG_{| B_{q}} \ltimes NB_{q}$
        to an open invariant tubular neighborhood $\Tu_{q}$
        of the zero section of $NB_{q}$ and the restricted Lie groupoid
        $\sfG_{| U_{q}}$.
  \end{enumerate}
  Observe that by construction each orbit in
  $\sfG_{| B_{q}} \ltimes \Tu_{q}$ is of finite type.
  After possibly shrinking the neighborhoods $U_{q}$ one can select
  a countable subset $Q \subset X$  such that the family
  $\big( U_{q}/\sfG \big)_{q\in Q} $ is a locally finite open covering
  of $X$. Now put
  $\sfH_0 := \bigcup_{q \in Q} U_{q}$ and $\sfH:= \sfG_{|\sfH_0}$.
  By construction, the canonical embedding $\sfH \hookrightarrow \sfG$
  is a weak equivalence. Moreover, each orbit in $\sfH_0$ is of finite type,
  since the orbits of the groupoids $\sfG_{| B_{q}} \ltimes \Tu_{q}$
  are of finite type and since the covering
  $\big( U_{q}/\sfG \big)_{q\in Q} $ is locally finite.
\end{proof}
For the remainder of this section we collect some further results about proper
Lie groupoids that will be needed in the course of the paper.
\begin{proposition}
\label{loc-morita}
  Let $p \in \sfG_0$ a point in a Lie groupoid $\sfG$, and let
  $Z\subset \sfG_0$ be a submanifold which intersects orbits transversally.  Let $\operatorname{Sat}(Z) := \orbproj^{-1} \orbproj (Z)$ be the saturation of $Z$ in
  $\sfG_0$. Then the embedding
  \[
    \sfG_{|Z} \hookrightarrow \sfG_{|\operatorname{Sat}(Z)}
  \]
  is a weak equivalence.
\end{proposition}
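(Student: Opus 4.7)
The plan is to verify the two conditions \textbf{(ES)} and \textbf{(FF)} in the definition of a weak equivalence for $\iota: \sfG_{|Z} \hookrightarrow \sfG_{|\operatorname{Sat}(Z)}$. As a preliminary step I would first establish that $\operatorname{Sat}(Z)$ is open in $\sfG_0$ — so that the target groupoid is honestly a Lie subgroupoid — by showing that the smooth map $\phi: \sfG_1 \times_{s, \sfG_0} Z \to \sfG_0$, $(g, \zeta) \mapsto t(g)$, is submersive at every point $(u(z), z)$ with $z \in Z$. Indeed, using the unit section to lift vectors in $T_zZ$ and vectors in $\ker T_{u(z)} s$ (which descend via $T_{u(z)}t$ onto orbit directions), the image of the differential of $\phi$ at $(u(z), z)$ equals $T_z Z + T_z \calO_z$, and this is all of $T_z \sfG_0$ by the transversality hypothesis. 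Translating by local bisections, which, as will be used again below, send orbits to orbits, then propagates openness to all of $\operatorname{Sat}(Z)$.

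For \textbf{(FF)}, the set-theoretic fiber product of $(s,t): \sfG_1 \to \sfG_0 \times \sfG_0$ with $Z \times Z \hookrightarrow \sfG_0 \times \sfG_0$ agrees with $(\sfG_{|Z})_1$ by definition, and the smooth structures match provided $(s,t)$ is transverse to $Z \times Z$. At a point $g: p \to q$ with $p, q \in Z$, the image of $T_g(s,t)$ contains $T_p \calO_p \oplus T_q \calO_q$, because the restriction of $t$ to an $s$-fiber is a submersion onto an orbit, and symmetrically. Adding $T_pZ \oplus T_qZ$ and invoking the transversality of $Z$ with the orbits through $p$ and $q$ in each coordinate yields the whole of $T_p\sfG_0 \oplus T_q\sfG_0$, so the required transversality — and hence the fiber-product identification — holds.

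The main work lies in \textbf{(ES)}: the map $t \circ \operatorname{pr}_1 : (\sfG_{|\operatorname{Sat}(Z)})_1 \times_{s, \operatorname{Sat}(Z)} Z \to \operatorname{Sat}(Z)$ must be a surjective submersion. Surjectivity is the very definition of $\operatorname{Sat}(Z)$. To verify the submersion property at $(g, z)$ with $s(g) = z$ and $t(g) = p$, I would choose a local bisection $\sigma: V \to \sfG_1$ through $g$, i.e.~with $\sigma(z) = g$, $s \circ \sigma = \id_V$, and $t \circ \sigma$ a local diffeomorphism; such $\sigma$ exists by a standard general-position argument inside $T_g\sfG_1$ relative to the two codimension-$\dim\sfG_0$ subspaces $\ker T_g s$ and $\ker T_g t$. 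The image of the differential of $t \circ \operatorname{pr}_1$ then contains both $T_g t(\ker T_g s) = T_p \calO_p$ (from vertical $s$-fiber directions) and $(t\circ\sigma)_*(T_z Z)$ (from $\sigma$-lifts of vectors in $T_z Z$). The crucial structural observation is that bisections preserve orbits: for $q, q' \in V$ with $q' = t(h)$ and $s(h) = q$, the composition $\sigma(q') \cdot h \cdot \sigma(q)^{-1}$ is a morphism from $(t\circ\sigma)(q)$ to $(t\circ\sigma)(q')$, so $(t\circ\sigma)_*$ restricts to an isomorphism $T_z \calO_z \xrightarrow{\sim} T_p \calO_p$. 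Hence the image of the tangent map at $(g,z)$ contains
\[
  (t\circ\sigma)_*(T_z Z) + T_p \calO_p = (t\circ\sigma)_*\bigl(T_z Z + T_z \calO_z\bigr) = (t\circ\sigma)_*(T_z \sfG_0) = T_p \sfG_0,
\]
using the transversality of $Z$ to $\calO_z$ in the middle equality. This closes (ES), and together with (FF) the map $\iota$ is a weak equivalence. The main obstacle I anticipate is precisely this last coordinated step: separately, the transversality of $Z$ to orbits and the orbit-preservation property of bisections are routine, but it is their combination in the submersion calculation that gives the argument teeth.
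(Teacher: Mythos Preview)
Your proposal is correct and follows essentially the same approach as the paper: both verify (ES) and (FF) after noting that $\operatorname{Sat}(Z)$ is open, identify the relevant fiber product in (ES) as $s^{-1}(Z)$, and use transversality of $Z$ to the orbits to obtain the submersion property. The paper's proof is much terser---it simply asserts that ``since in a Lie groupoid the target map is a submersion, transversality of $Z$ implies that the restriction of $t$ \ldots\ is a submersion as well'' and declares (FF) ``obvious''---whereas you unpack these steps explicitly via local bisections and the transversality check for $(s,t)$ against $Z\times Z$; your more detailed treatment is a faithful expansion of the same argument.
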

\begin{proof}
First remark that $\operatorname{Sat}(Z)$ is open in $\sfG_0$, so $\sfG_{|\operatorname{Sat}(Z)}$ is indeed a Lie groupoid.
As stated above, we have to check the two conditions (ES) and (FF) for the inclusion.
The first follows from the fact that
\[
  \left(\sfG_{|\operatorname{Sat}(Z)}\right)_1\times_{\operatorname{Sat}(Z)}Z:=
  \{g\in\sfG_1\mid s(g)\in Z\},
\]
which clearly shows that the map $t$ to $\operatorname{Sat}(Z)$ is surjective.
Since in a Lie groupoid the target map is a submersion, transversality of $Z$ implies that
the restriction of $t$ to $\operatorname{Sat}(Z)$ is a submersion as well. Secondly, (FF) is
obvious by the definition of $\operatorname{Sat}(Z)$.
\end{proof}
\begin{definition}
A slice at $p\in\sfG_0$ is a submanifold $Z\subset \sfG_0$  with $p\in Z$, which intersects orbits transversally, and $T_p\sfG_0=T_pZ\oplus T_p\calO_p$.
\end{definition}
\begin{proposition}
\label{Prop:Bisec}
  Let $p, p' \in \sfG_0$ be two objects which lie in the same orbit $\calO$ of
  a proper Lie groupoid $\sfG$ .
  Let $Z$ and $Z'$ be two submanifolds of $\sfG_0$ such that $p \in Z$,
  $p'\in Z'$ and which are both  slices  to $\calO$.
  Then, after possibly shrinking $Z$ and $Z'$ there exists an open neighborhood
  $U\subset \sfG_0$ of $p'$ with $U \cap Z' = Z'$ and a local bisection
  $\sigma :U \rightarrow \sfG_1$ such that $t \circ \sigma_{|Z'}$ is a
  diffeomorphism from $Z'$ onto $Z$.
\end{proposition}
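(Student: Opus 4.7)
Plan. I would prove the proposition in two main steps: first transport $Z'$ to a slice at $p$ via a local bisection $\sigma_0$, then correct by a bisection $\tau$ fixing $p$ so that the transported slice lands on the specified $Z$. Composing these two bisections via the groupoid product yields the desired $\sigma$.

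\emph{Transport step.} Fix an arrow $g_0 \in \sfG_1$ with $s(g_0) = p'$ and $t(g_0) = p$, and construct a local bisection $\sigma_0 : U_0 \to \sfG_1$ on a neighborhood $U_0$ of $p'$ with $\sigma_0(p') = g_0$. This reduces to finding a $(\dim \sfG_0)$-dimensional subspace $V \subset T_{g_0}\sfG_1$ mapping isomorphically under both $T_{g_0}s$ and $T_{g_0}t$. Such a $V$ can be produced by a short linear-algebra argument: the image of $(T_{g_0}s, T_{g_0}t)$ in $T_{p'}\sfG_0 \times T_p\sfG_0$ surjects onto each factor with projection kernels $\{0\} \times T_p\calO$ and $T_{p'}\calO \times \{0\}$, and this leaves enough freedom to find the graph of a linear isomorphism inside the image. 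Then $\phi_0 := t \circ \sigma_0$ is a local diffeomorphism near $p'$ taking orbits of $\sfG$ to orbits of $\sfG$ (since $\sigma_0(q)$ is an arrow from $q$ to $\phi_0(q)$), so $Z_1 := \phi_0(Z' \cap U_0)$ is a slice at $p$ after possibly shrinking $Z'$.

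\emph{Correction step.} It remains to show that for any two slices $Z_1$ and $Z$ at the same point $p$ there exists a local bisection $\tau$ on a neighborhood of $p$ with $\tau(p) = u(p)$ such that $t \circ \tau$ restricts to a diffeomorphism $Z_1 \to Z$. For this I invoke the improved slice theorem (Theorem~\ref{Thm:SliceThmZuWei} combined with Proposition~\ref{Prop:sloctriv}, cf.\ Remark~\ref{Rem:spectub}) to identify $\sfG$ near $\calO$ with the action groupoid $\sfG_{|O} \ltimes NO$, with $\sfG_0$ corresponding to an open neighborhood of the zero section. Writing $Z_1$ and $Z$ as local graphs over the fiber $N_p\calO$, one can use the implicit function theorem to solve for a smooth family of arrows $g_{z_1}$ taking each $z_1 \in Z_1$ near $p$ to a corresponding point of $Z$ in the same orbit, with $g_p = u(p)$. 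This defines a partial section of the source map along $Z_1$ landing in $t^{-1}(Z)$; extend it to a section of $s$ defined on a full neighborhood of $p$ in $\sfG_0$, arranging the extension so that its composition with $t$ is a local diffeomorphism of $\sfG_0$, hence is a bisection. Finally, setting $\sigma(q) := \tau(\phi_0(q)) \cdot \sigma_0(q)$ (the groupoid product is defined since $s(\tau(\phi_0(q))) = \phi_0(q) = t(\sigma_0(q))$) produces a local bisection on a neighborhood of $p'$ with $t(\sigma(Z')) = Z$ as required.

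\emph{Main obstacle.} The crux is the correction step. The existence of arrows between matched points of $Z_1$ and $Z$ is clear from transversality and the orbit partition, but the choice is not canonical (it involves a torsor over the isotropy), and both the smooth dependence on the slice point and the subsequent extension from $Z_1$ to an ambient neighborhood of $p$ preserving the bisection property require the linearization: the action-groupoid structure $\sfG_{|O} \ltimes NO$ provides the local product decomposition of $NO$ over $O$ that makes both the implicit function theorem setup and the extension tractable.
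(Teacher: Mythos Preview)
Your two-step strategy---transport via a bisection to reduce to $p=p'$, then use the local linearization $\sfG_{|O}\ltimes NO$ to build the correcting bisection---is exactly the approach taken in the paper. The paper carries out the correction step with an additional reduction (taking one of the two slices to be an open subset of the fiber $N_p$ itself) and then writes down an explicit formula for $\sigma$ rather than invoking the implicit function theorem, but this is a matter of presentation, not a different argument.
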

\begin{proof}
  Since there exists an arrow $g\in \sfG_1$ with $s (g) =p$ and $t(g)=p'$
  and a local bisection $\sigma$ around $p$ such that $\sigma (p) =g$, one can reduce
  the claim to the case, where $p=p'$. Moreover, since the claim is local it suffices
  by remark  \ref{Rem:spectub} to assume that $\sfG$ is isomorphic to
  $\sfG_{\calO} \ltimes N\calO$ where $\calO$ is the orbit through $p$.
  Finally, by an immediate composition argument, one can reduce the claim to the case,
  where $Z'$ is an open neighborhood of the origin in $N_p$. So let us show this.
  Choose a local section $\varrho :O \rightarrow \sfG(p,-) := s^{-1} (p)$ of
  $t_{|\sfG(p,-)}$ such that $O$ is an open neighborhood of $p$  in the orbit $\calO$.
  Then consider the map
  \begin{displaymath}
    \Psi: O \times N_p \rightarrow NO, \quad (q,n) \mapsto \varrho (q) \cdot n .
  \end{displaymath}
  Since $T_{(p,0_p)} \Psi = \id$, and $\Psi$ is linear in each fiber, $\Psi$ becomes an
  isomorphism of vector bundles after shrinking $O$. Now recall that $Z \subset NO$
  is transversal to $O$, hence after shrinking $Z$, the projection map
  \begin{displaymath}
    P:Z \rightarrow N_p, \quad  P := \operatorname{pr}_2 \circ \Psi^{-1}_{|Z}
  \end{displaymath}
  becomes an open embedding. Let $V_p := P(Z)$. After shrinking $O$, one can assume
  that $O$ is diffeomorphic to the unit ball in some $\R^N$ such that $p$ corresponds
  to the origin under this diffeomorphism. Denote by $O_{\frac 12}$ the image of the
  ball of radius $\frac 12$ under this diffeomorphism. Addition in $\R^N$ gives rise
  to a smooth map
  \begin{displaymath}
    \star : O_{\frac 12} \times O_{\frac 12} \rightarrow O .
  \end{displaymath}
  Note that then $p \star p = p$ and that $p$ is a neutral element with respect to
  $\star$. Put $U_p := \Psi \Big( O_{\frac 12} \times
    \big( V_p \cap P ( \pi^{-1} (O_{\frac 12}) \cap Z ) \big )\Big)$, where $\pi$
  is the projection for $NO$, and define
  \begin{displaymath}
    \begin{split}
      \sigma : \, & U_p \rightarrow \big( \sfG_{|O} \ltimes NO \big)_1 \subset \sfG_1, \\
      & q \mapsto
      \Big( \varrho \big( \operatorname{pr}_1 (q) 
      \star \pi \big( P^{-1} (\operatorname{pr}_2 (q))\big)\big), \operatorname{pr}_2 (q) \Big) 
      \circ \Big(\varrho ( \operatorname{pr}_1 (q), \operatorname{pr}_2 (q) \Big)^{-1} ,
    \end{split}
  \end{displaymath}
  where $\operatorname{pr}_1 (q) = \pi (q) \in O$, and $\operatorname{pr}_2 (q) \in N_p$ is
  the projection onto the second coordinate under the isomorphism $O\times N_p \cong NO$.
  We check now two properties, namely Eq.~\eqref{eq1} and  Eq.~\eqref{eq2}.
  For $n\in N_p$, which implies in particular $\pi (n) =p$ and
  $\operatorname{pr}_2 (n)=n$, we get with $b:= P^{-1} (n)$:
     \begin{equation}
     \label{eq1}
        \begin{split}
            t\circ \sigma (n)\, &  = \varrho (\pi (b)) \cdot n = \varrho (\pi (b))\cdot P(b) =
            \Psi \big( \pi (b), P(b) \big) = \\
            & = \Psi \Big( \operatorname{pr}_1 \big( \Psi^{-1} (b)\big),
            \operatorname{pr}_2 \circ \Psi^{-1} (b) \Big) = b = P^{-1} (n)  .
        \end{split}
     \end{equation}
 Moreover, for $q \in O_{\frac 12}$ one has
   \begin{equation}
     \label{eq2}
        \begin{split}
           t \circ \sigma (q) = \varrho (q) \cdot 0_p = t (\varrho (q)) = q .
        \end{split}
    \end{equation}
 Hence, $T_p (t\circ \varrho)$ is invertible, which implies that after shrinking $U_p$,
 the   map $t\circ \sigma \rightarrow \sfG_0$ is an open embedding. Finally,
 for $q \in U_p$,
 \begin{displaymath}
   s \circ \sigma (q) = \varrho ( \operatorname{pr}_1  (q) ) \cdot \operatorname{pr}_2 (q) =
   \Psi \big(\operatorname{pr}_1  (q) , \operatorname{pr}_2 (q) \big) = q.
 \end{displaymath}
 Hence, $\sigma$ is a bisection with the desired properties.
\end{proof}

\begin{remark}
The properness assumption in the preceding proposition is not necessary
(cf.~\cite{Cra:pc}).  In the case of a Lie
algebroid, a similar result as above has been proved in 
\cite[Thm 1.2]{fernandes}. Fernandes' proof can be easily adjusted to 
give another proof of Prop.~\ref{Prop:Bisec} without the properness 
assumption. We have chosen the above proof since we need it for our later 
construction of the metric tubular neighborhood in 
Theorem \ref{Thm:SliceThm2}. More precisely, we shall use the 
following corollary to the proof of Prop.~\ref{Prop:Bisec}.
\end{remark}

\begin{corollary}\label{cor:local}
Let $p\in \sfG_0$ be a point in an orbit $\calO_p$ of a proper Lie groupoid $\sfG$. Then there
is a neighborhood $U$ of $p$ in $\sfG_0$ diffeomorphic to $O\times
V_p$, where $O$ is an open ball in $\calO_p$ centered at $p$, and
$V_p$ is a $G_p$ invariant open ball in $N_p\calO_p$ centered at the
origin. Under this diffeomorphism, the restricted groupoid
$\sfG_{|U}$ is isomorphic to the product of the pair groupoid
$O\times O\rightrightarrows O$ and the groupoid
$G_p \ltimes V_p\rightrightarrows V_p $.
\end{corollary}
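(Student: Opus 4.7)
The plan is to reduce to the linearized setting via Remark~\ref{Rem:spectub} and then exploit the local trivialization built inside the proof of Prop.~\ref{Prop:Bisec}. First I would invoke Remark~\ref{Rem:spectub} to shrink $\sfG_0$ around $p$ so that, without loss of generality, $\sfG$ is the restriction of the action groupoid $\sfG_{|\calO_p} \ltimes N\calO_p$ to an open neighborhood of the zero section. This reduces the corollary to producing the desired product structure for this linearized model.

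The key ingredient is then a local section $\varrho : O \to s^{-1}(p)$ of $t|_{s^{-1}(p)}$, with $O$ a sufficiently small open ball around $p$ in $\calO_p$, exactly as in the proof of Prop.~\ref{Prop:Bisec}. This section simultaneously yields the vector bundle isomorphism $\Psi : O \times N_p\calO_p \to NO$, $(q, n) \mapsto \varrho(q) \cdot n$ already used in that proof, together with a groupoid isomorphism
\[
  \Phi : (O \times O) \times G_p \;\longrightarrow\; \sfG_{|O}, \qquad
  \bigl((q_1, q_2), k\bigr) \;\longmapsto\; \varrho(q_2) \cdot k \cdot \varrho(q_1)^{-1},
\]
whose domain is the product of the pair groupoid on $O$ with the isotropy group $G_p$; its inverse sends $g$ to $\bigl((s(g), t(g)),\, \varrho(t(g))^{-1} \cdot g \cdot \varrho(s(g))\bigr)$. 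A one-line computation based on the identity $\varrho(q_2) \cdot k \cdot \varrho(q_1)^{-1} \cdot \varrho(q_1) \cdot n = \varrho(q_2) \cdot (k \cdot n) = \Psi(q_2, k \cdot n)$ then shows that under $\Phi$ and $\Psi$ the action of $\sfG_{|O}$ on $NO$ becomes the obvious product action $\bigl((q_1, q_2), k\bigr) \cdot (q_1, n) = (q_2, k \cdot n)$, identifying $\sfG_{|O} \ltimes NO$ with $(O \times O) \times (G_p \ltimes N_p\calO_p)$ as Lie groupoids.

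To conclude, I would use compactness of $G_p$ to choose a $G_p$-invariant inner product on $N_p\calO_p$, then pick a $G_p$-invariant open ball $V_p$ around the origin small enough that $\Psi(O \times V_p)$ lies inside the linearizing neighborhood from the first step, and set $U := \Psi(O \times V_p)$. No serious obstacle is anticipated: the reduction is supplied by Remark~\ref{Rem:spectub}, the trivialization $\Psi$ is already in hand from Prop.~\ref{Prop:Bisec}, and the compatibility of $\Psi$ and $\Phi$ with the groupoid structure is the one-line calculation above; the only small care is to choose $V_p$ both $G_p$-invariant and sufficiently small to fit inside the linearizing tube, which is routine by compactness of $G_p$.
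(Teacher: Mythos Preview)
Your proposal is correct and follows exactly the route the paper intends: the corollary is stated without proof as a consequence of the constructions inside the proof of Prop.~\ref{Prop:Bisec}, namely the reduction via Remark~\ref{Rem:spectub} and the trivialization $\Psi:O\times N_p\calO_p\to NO$ coming from the local section $\varrho$. You have simply made explicit the groupoid isomorphism $\Phi:(O\times O)\times G_p\to\sfG_{|O}$ and the compatibility check $\Phi\bigl((q_1,q_2),k\bigr)\cdot\Psi(q_1,n)=\Psi(q_2,kn)$, which the paper leaves implicit.
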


We end this section by proving the existence of complete, transversally invariant 
riemannian metrics for proper Lie groupoids.
\begin{definition}
 Given a riemannian metric $\eta$ on $G_0$, we shall say that it is {\em transversally invariant},
 if, restricted to each orbit $\calO$, the induced metric on the normal bundle
 $N_\calO$ is invariant under the canonical action of $\sfG|_\calO$.
\end{definition}
\begin{remark}
 Riemannian structures on Lie groupoids have been considered before, cf.~\cite{gghr}, but 
there it is assumed that both $\sfG_0$ and $\sfG_1$ carry riemannian metrics which are 
compatible in some sense. More recently, another version of riemannian groupoids was 
introduced in \cite{gl}, where only $\sfG_0$ is required to be equipped with a riemannian 
metric, subject to the condition that the local bisections of $\sfG_1$ act by isometries. 
Our definition of a transversally invariant metric in turn is weaker than this: one easily 
shows the metric on $\sfG_0$ on a riemannian groupoid (in either of the concepts 
mentioned above) is transversally invariant.
\end{remark}
\begin{proposition}
\label{Prop:InvMet}
 Let $\sfG$ be a proper Lie groupoid. Then there exists a transversally invariant, 
 complete, riemannian metric on $\sfG_0$.
\end{proposition}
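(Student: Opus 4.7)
The plan is to construct a transversally invariant metric by local averaging followed by a global patching argument, and then conformally rescale to obtain completeness. The approach exploits the fact that transversal invariance is a condition only on the restriction of the metric to each normal bundle $N\calO$, on which $\sfG_{|\calO}$ acts linearly and properly; averaging with respect to a Haar system on the proper Lie groupoid $\sfG_{|\calO}$ therefore yields invariant fiberwise inner products.

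First, using Remark \ref{Rem:spectub} (which combines Proposition \ref{Prop:sloctriv} with Theorem \ref{Thm:SliceThmZuWei}), I would choose a locally finite open cover $\{U_\alpha\}$ of $\sfG_0$ such that each restricted groupoid $\sfG_{|U_\alpha}$ is isomorphic to the restriction of an action groupoid $\sfG_{|O_\alpha}\ltimes NO_\alpha$ to a tubular neighborhood of the zero section, with $O_\alpha = U_\alpha\cap\calO_{p_\alpha}$. On each $U_\alpha$ I would construct a local metric $\eta_\alpha$ by: picking an arbitrary riemannian metric on $O_\alpha$; averaging some initial fiberwise inner product on $NO_\alpha$ with a Haar system on the proper groupoid $\sfG_{|O_\alpha}$ to obtain a $\sfG_{|O_\alpha}$-invariant fiberwise inner product; and combining them via the splitting $T(NO_\alpha)|_{O_\alpha} = TO_\alpha\oplus NO_\alpha$, extended over the tubular neighborhood by the vector bundle structure. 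This $\eta_\alpha$ is transversally invariant for $\sfG_{|U_\alpha}$.

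To pass from local to global invariance, I would first extend each $\eta_\alpha$ to an invariant structure on the full saturation $\sfG\cdot U_\alpha$ by transporting the fiberwise inner products on the normal bundles of orbits through $U_\alpha$ along groupoid arrows; smoothness of the transport follows from the local bisection construction of Proposition \ref{Prop:Bisec}, applied to slices through points of $U_\alpha$. Since $\sfG_0/\sfG$ is paracompact Hausdorff, I would then take a partition of unity $\{\tilde\lambda_\alpha\}$ on the orbit space subordinate to $\{\orbproj(U_\alpha)\}$ and pull it back to basic smooth functions $\lambda_\alpha = \tilde\lambda_\alpha\circ\orbproj$ on $\sfG_0$. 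Because each $\lambda_\alpha$ is constant along orbits, the convex combination $\eta = \sum_\alpha \lambda_\alpha\eta_\alpha$ remains transversally invariant: for any $g\in\sfG_1$ with $s(g),t(g)$ in the same orbit one has $\lambda_\alpha(s(g)) = \lambda_\alpha(t(g))$, and each summand respects the groupoid action on $N\calO$ by construction.

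For completeness, I would pull back a proper continuous exhaustion function from $X=\sfG_0/\sfG$ to an invariant proper exhaustion $f$ on $\sfG_0$, then rescale $\eta$ by the square of a suitable positive invariant smooth function of $f$; the standard argument shows that a sufficiently large conformal factor forces geodesics to have infinite length as they leave compact sets, and transversal invariance is preserved because the factor is basic. The main obstacle is the patching step: ensuring that the local transversally invariant metrics $\eta_\alpha$ extend coherently to invariant structures on the saturations $\sfG\cdot U_\alpha$ and that the combined metric genuinely inherits transversal invariance under arrows that leave individual $U_\alpha$. The use of basic (orbit-space) partitions of unity, together with the smooth bisection transport from Proposition \ref{Prop:Bisec}, is precisely what is designed to resolve this point.
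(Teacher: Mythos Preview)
Your approach differs substantially from the paper's. The paper performs a single global averaging: it chooses an Ehresmann connection on $\sfG$, i.e.\ a splitting $\sigma_s: s^*T\sfG_0 \to T\sfG_1$ of $ds$, to obtain a smooth \emph{quasi}-action $\lambda_g: T_{s(g)}\sfG_0 \to T_{t(g)}\sfG_0$ on the full tangent bundle that lifts the canonical action on the normal bundles. Fixing a Haar system $(\mu^x)$ and a cutoff function $c$, it then averages an arbitrary metric $\tilde\eta$ by
\[
\eta'_x(v,w)=\int_{s^{-1}(x)}\tilde\eta_{t(g)}\big(\lambda_g(v),\lambda_g(w)\big)\,c(t(g))\,d\mu^x(g),
\]
and the result is transversally invariant because $\lambda$ descends to the honest normal action. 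Completeness is obtained, much as in your outline, by a conformal factor built from an invariant proper function.

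Your local-to-global route has a genuine gap at the patching step. The metric $\eta_\alpha$ is defined only on $U_\alpha$, whereas the basic function $\lambda_\alpha=\tilde\lambda_\alpha\circ\orbproj$ has support in the saturation $\sfG\cdot U_\alpha$, which is in general strictly larger; thus $\sum_\alpha\lambda_\alpha\eta_\alpha$ is not well-defined as written. Your proposed remedy---transporting the fiberwise inner products on the normal bundles along arrows---yields only an inner product on each $N_q\calO_q$ over the saturation, not a smooth riemannian metric on $T(\sfG\cdot U_\alpha)$. The bisections of Proposition~\ref{Prop:Bisec} do let you pull back the \emph{full} metric $\eta_\alpha$, but only over small open patches, and different bisections produce different tangential parts; assembling these into one smooth metric on the saturation requires a further (ordinary, non-basic) partition-of-unity argument that you have not supplied. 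The step is repairable, since all bisection pullbacks agree on the normal part and any convex combination of them still restricts correctly, but this is exactly the delicate point your outline glosses over. The paper's Ehresmann-connection averaging is designed precisely to avoid it: it provides from the outset a global smooth lift of the normal action to $T\sfG_0$, so no local-to-global patching of metrics is ever needed.
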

\begin{proof}
As remarked above, $\sfG$ acts for every orbit $\calO$ on the normal bundle
$N_\calO$ to the orbit, but does in general not act on the tangent bundle $T\sfG_0$.
However, there is a way to lift the representations on the normal bundles to a
non-associative action on $T\sfG_0$, part of a {\em representation up to homotopy}
as in \cite{ac}, called the adjoint representation.
This construction depends on a choice of an Ehresmann connection $\sigma$ on
$\sfG$, that is, a smooth choice of a splitting $\sigma_s:s^*T\sfG_0\to T\sfG_1$
of the exact sequence of vector bundles
\[
 0\longrightarrow t^*A \stackrel{r}{\longrightarrow} T\sfG_1
 \stackrel{ds}{\longrightarrow} s^*T\sfG_0\longrightarrow 0,
\]
which reduces to the canonical inclusion over $\sfG_0$. Here,
$A:=\ker ds_{|\sfG_0}$ is the Lie algebroid of $\sfG$, and $r$ is the map given by
right multiplication on $\sfG_1$. Such a splitting amounts to the choice of a distribution on $\sfG_1$ 
complementary to the $s$-fibers, which equals $T\sfG_0$ at the identity. 
In turn, composition with the inversion map 
determines a splitting
$\sigma_t:=\sigma_s\circ d\iota:t^*T\sfG_0\to T\sfG_1$ of the similar exact sequence 
\[
 0\longrightarrow s^*A \stackrel{l}{\longrightarrow} T\sfG_1
 \stackrel{dt}{\longrightarrow} t^*T\sfG_0\longrightarrow 0,
\]
with the roles of $s$
and $t$ reversed, where $l$ is now induced by the left multiplication.

With this choice of Ehresmann connection, we can define smooth sections
$\mu\in\Gamma^\infty(\sfG_1,\Hom(s^*A,t^*A))$ and
$\lambda\in\Gamma^\infty(\sfG_1,\Hom(s^*T\sfG_0,t^*T\sfG_0))$ by
\[
\begin{split}
  \mu&:=-(\pi_t\circ l),\quad \text{and}\\
  \lambda&:=dt\circ \sigma_s,
\end{split}
\]
where $\pi_t:T\sfG_1\to t^*A$ is the projection induced by $\sigma_s$, and $l$ denotes left multiplication in $\sfG_1$. We think of $\mu$ and $\lambda$ as giving for each $g\in \sfG_1$ a homomorphism, denoted $\mu_g, \lambda_g$, from $A_{s(g)}$ resp. $T_{s(g)}\sfG_0$ to $A_{t(g)}$ resp. $T_{t(g)}\sfG_0$. In general these sections will not define representations of $\sfG$, e.g. $\lambda_{g_1}\circ\lambda_{g_2}\neq \lambda_{g_1g_2}$ for $(g_1,g_2)\in\sfG_2$, but one easily checks that the anchor map $\rho:A\to T\sfG_0$ defined by $\rho:=dt_{|\sfG_0}$ is equivariant with respect to the quasi-actions defined by $\lambda$ and $\mu$. It therefore induces a quasi-action of $\sfG$ on the normal bundle
$N$, i.e., the quotient $T\sfG_0\slash\rho(A)$. This action is the same as the canonical one defined in \eqref{rep-normal-bundle} because the Ehresmann connection exactly defines a choice of lifting smooth paths in $\sfG_0$ to $\sfG_1$, and therefore the quasi-action on $N$ induced by $\lambda$ and $\mu$ is in fact a true action. In other words, the obstruction to associativity of the quasi-action on $T\sfG_0$  lies in the image of the anchor, a precise formula can be found in \cite{ac},  and we can view $\lambda$ as a smooth, albeit non-associative, lifting of the canonical action of $\sfG$ on $N$. In the following proof, we will use the dual quasi-action of $\sfG$ on $T^*\sfG_0$, i.e. 
\[
\lambda^{*}_g: T^*_{t(g)}\sfG_0\to T^*_{s(g)}\sfG_0. 
\]
Let $N^\perp\subset T^*\sfG_0$ be the conormal bundle of $\rho(A)\subset T\sfG_0$. The ``bundle" $N^\perp$ is canonically identified with the dual bundle of the normal bundle $N=T\sfG_0\slash\rho(A)$. As $\lambda$ preserves $\rho(A)$, the quasi-action $\lambda^*$ preserves $N^\perp$. Furthermore, the restricted $\sfG$ quasi-action $\lambda^*_{|N^\perp}$ agrees with the canonical $\sfG$ action on the dual of $N$, and therefore is associative. 

Next we fix a smooth Haar system $\big(\mu^x\big)_{x\in\sfG_0}$ and choose a
``cut-off'' function $c$ for $\sfG$. This is a smooth function on $\sfG_0$
with the properties
\begin{enumerate}[(CU1)]
\item $s:\operatorname{supp}(c\circ t)\to\sfG_0$ is proper,
\item
\[
 \int_{s^{-1}(x)}c(t(g))d\mu^x(g)=1\quad\text{for all}~x\in\sfG_0.
\]
\end{enumerate}
Such a function exists by properness of $\sfG$, cf.~\cite{tu}.
Given an arbitrary riemannian metric $\tilde{\eta}$ on $T^*\sfG_0$, we can now consider the averaged metric given by
\[
  \eta'_x(\alpha,\beta)=\int_{s^{-1}(x)}\tilde{\eta}_{t(g)}\big(
  \lambda^*_{g^{-1}}(\alpha),\lambda^*_{g^{-1}}(\beta)\big) \, c(t(g))d\mu^x(g),
\]
where $\alpha,\beta\in T^*_x\sfG_0$. It is not immediately clear that this indeed defines a metric, more specifically, is positive definite. However, since the Ehresmann connection is required to coincide with $T\sfG_0$ when restricted to $\sfG_0$, it follows that each identity arrow $1_x,~x\in\sfG_0$ acts by the identity transformation. By continuity therefore, $\tilde{\eta}_{t(g)}(\lambda^*_{g^{-1}}(\alpha),\lambda^*_{g^{-1}}(\alpha))> 0$ for $g$ in a neighborhood of $1_x$ in $s^{-1}(x)$, and we see that $\eta'$ is positive definite. When restricted to the conormal bundle $N^\perp$, the averaged metric $\eta'_{|N^\perp}$ is invariant under $\sfG$ by the invariance of the system of measures $(c\circ t)\mu^x$. Let $\eta^*$ be the dual metric of $\eta'$ on $T\sfG_0$. It is easy to check that the induced metric on the normal bundle $N$ is dual to the restricted metric $\eta'_{|N^\perp}$ via the canonical pairing between $N$ and $N^\perp$. As $\eta'_{|N^\perp} $ is $\sfG$-invariant, the induced metric on $N$ is also $\sfG$-invariant. Therefore, $\eta^*$ is transversally invariant.

This proves the existence of a transversally invariant riemannian metric, and we finally 
show that it can even be made complete: for this we adapt the simple argument given by 
\cite[Lemma A5]{w:linear2} to the groupoid case. Using averaging as above, on easily 
shows that there exist $\sfG_0$-invariant proper smooth functions on $\sfG_0$. Choosing 
such a function $f$, consider the metric
\[
\eta:=\left(1+||\operatorname{grad}(f)||^2_{\eta^*}\right)^{-1}\eta^*.
\]
With respect to $\eta$, we then have $||\operatorname{grad}(f)||_\eta\leq 1$, and therefore $\eta$ is complete.
\end{proof}
\begin{remark}\label{rmk:rui}
In the above proof of the existence of a transversally invariant metric, the step of passing to the dual bundle was suggested to us by Matias del Hoyo and Rui Fernandes \cite{hofe}. 
\end{remark}

\section{The  slice theorem for proper Lie groupoids revisited}
\label{sec:metric-slice}
Assume that we are given a proper Lie groupoid $\sfG$ with a
transversally invariant riemannian metric on $\sfG_0$. Let $\calO \subset \sfG_0$ be
an orbit and $p\in \calO$ a point. Choose $\varepsilon :\calO
\rightarrow \R_{>0}$ such that the metric exponential map $\exp$
defines a tubular neighborhood $(N\calO,\varepsilon,\varphi)$ of
$\calO$ in $M$, where $N\calO$
is the normal bundle to $\calO$ in $M$ and $\varphi :=
\exp_{|\Tu^\varepsilon_{\calO,N\calO}} : \Tu^\varepsilon_{\calO,
N\calO} \rightarrow \Tu_\calO := \exp (\Tu^\varepsilon_{\calO,
N\calO})$. Let $\pi_\calO :\Tu_\calO \rightarrow \calO$ be the projection of the 
tubular neighborhood. Denote by $B_{\varepsilon (p)} (0_p) = N_p \cap
\Tu^\varepsilon_{\calO, N\calO}$ the ball of radius $\varepsilon (p)
$ in $N_p\calO$ and by $ Z_p$ the slice $\exp ( B_{\varepsilon (p)}
(0_p))$. We can assume without restriction now and in the following
that $\varepsilon (q) \leq \varepsilon (p)$ for all $q \in \calO$.
Moreover, by Prop.~\ref{Prop:Bisec}, one can shrink $\varepsilon$ so
that one  has for the projection $\orbproj : \sfG_0 \rightarrow X$ onto
the orbit space:
\begin{equation}
\label{Eq:ProjProp}
  \orbproj (Z_q) \subset \orbproj (Z_p)
  \quad \text{for all $q\in \calO$}.
\end{equation}
After possibly shrinking $\varepsilon$ again, there exists a
homomorphism $\lambda : \sfG_{|Z_p} \rightarrow \sfG_p$ by \cite[Sec.~2.2]{Zun} which induces
an isomorphism of groupoids $ \theta : \sfG_p  \ltimes Z_p \rightarrow \sfG_{|Z_p} $
whose inverse is $(\lambda,s_{\sfG_{|Z_p}})$.
Since the restricted exponential map
$\exp_{|B_{\varepsilon (p)} (0_p)} : B_{\varepsilon (p)} (0_p) \rightarrow Z_p$
is a $\sfG_p$-equivariant isomorphism, one obtains an isomorphism of groupoids
$\Theta : \sfG_p \ltimes B_{\varepsilon (p)} \rightarrow
\sfG_{|Z_p} $ by $\Theta := \theta \circ (\id_{|\sfG_p},\exp_{|B_{\varepsilon
(p)} (0_p)})$. One concludes that the following diagrams commute.
\begin{equation}
  \label{GrpdHomDiag1}
    \xymatrix{\sfG_p \ltimes B_{\varepsilon (p)}
    \ar[r]^{s}\ar[d]_\Theta & \ar[d]^{\exp} B_{\varepsilon (p)}\\
    \sfG_{|Z_p} \ar[r]_{s} & Z_p}
    \qquad
    \xymatrix{\sfG_p \ltimes B_{\varepsilon (p)}
    \ar[r]^{t}\ar[d]_\Theta & \ar[d]^{\exp} B_{\varepsilon (p)}\\
    \sfG_{|Z_p}  \ar[r]_{t} & Z_p}
\end{equation}

Next consider the submanifold $\sfG (-,Z_p) := t^{-1} (Z_p)$ of
$\sfG_1$. As explained in Step 2. of the proof of
\cite[Thm.~9.1]{w:linear2}, the isotropy group $\sfG_p$ acts on
$\sfG (-,Z_p)$ via the embedding of $\sfG_p$ into the group of
bisections of $\sfG_{|Z_p}$. The restriction $t_{|\sfG (-,Z_p)} :
\sfG (-,Z_p) \rightarrow Z_p$ of the target map then is a
$\sfG_p$-equivariant submersion. By Thm.~\ref{thm:mettub} there
exists a continuous map $\tilde \varepsilon   : \sfG(-,p)
\rightarrow \R_{>0}$ and a $\sfG_p$-equivariant tubular neighborhood
$\big( E , \tilde\varepsilon, \Phi \big)$ of $\sfG (-,p)$ in
$\sfG(-,Z_p)$ such that the compatibility relation
\begin{equation}
\label{Eq:comprelS}
   \pi_\calO s \, ( \pi_{\sfG(-,p)} (\gamma )) =  \pi_\calO s \, (\gamma )
\end{equation}
holds for all $\gamma \in \Tu^{\tilde\varepsilon}_{\sfG(-,p)}$ and
$\pi_{\sfG(-,p)} : \Tu^{\tilde\varepsilon}_{\sfG(-,p)} \rightarrow
\sfG(-,p)$ the projection of the tubular neighborhood.
Moreover, one can achieve after shrinking $\tilde\varepsilon$ that
\[
  \big( \pi_{\sfG(-,p)} , t_{|\Tu^{\tilde\varepsilon}_{\sfG(-,p)} } \big):
  \Tu^{\tilde\varepsilon}_{\sfG(-,p)} \rightarrow \sfG(-,p) \times Z_p
\]
is an open embedding. Since $\tilde\varepsilon$ is $\sfG_p$-invariant
one can even assume after further shrinking $\varepsilon$ and
$\tilde\varepsilon$ that $\tilde\varepsilon = \varepsilon \circ s_{|\sfG(_,p)} $. By slight abuse of
language we will therefore write $\varepsilon$ instead of $\tilde\varepsilon$
from now on. The restricted source map
$s_{|\Tu^\varepsilon_{\sfG (-,p)}} : \Tu^\varepsilon_{\sfG (-,p)} \rightarrow
     \Tu^\varepsilon_{\calO}$ is then a surjective submersion.
Now we will use Step 3 in the proof of \cite[Thm.~9.1]{w:linear2} to construct
a submersive retraction $\Pi : \sfG_{|\Tu^\varepsilon_\calO} \rightarrow
\sfG_{|\calO}$. To this end let $\gamma \in \sfG_{|\Tu^\varepsilon_\calO}$.
Since then $s(\gamma) \in \Tu^\varepsilon_\calO$ and $t(\gamma) \in
\Tu^\varepsilon_\calO$, Eq.~(\ref{Eq:ProjProp}) entails that there are arrows
$\iota , \kappa \in
\Tu^{\varepsilon}_{\sfG (-,p)} $ such that $s(\iota) = t(\gamma)$ and $s(\kappa)
= s(\gamma)$. The composition $\iota \gamma \kappa^{-1} $ then is well-defined
and lies in $\sfG_{|Z_p}$. Hence on can put
\begin{equation}
  \Pi (\gamma) :=  \pi_{\sfG (-,p)} (\iota)^{-1} \,  \lambda \big( \iota \gamma
  \kappa^{-1} \big) \, \pi_{\sfG (-,p)} (\kappa) .
\end{equation}
By Eq.~\eqref{Eq:comprelS} and since $\pi_\calO$ is a retraction onto $\calO$
one obtains for the target and source of $\Pi(\gamma)$
\begin{equation}
\label{Eq:Compst}
\begin{split}
  s \big( \Pi (\gamma ) \big) & \, = s \big( \pi_{\sfG (-,p)} (\kappa) \big) =
  \pi_\calO s \pi_{\sfG (-,p)} (\kappa) =  \pi_\calO s (\kappa) =
  \pi_\calO s (\gamma) , \text{ and }\\
  t \big( \Pi (\gamma ) \big) & \, = s \big( \pi_{\sfG (-,p)} (\iota) \big) =
  \pi_\calO s \pi_{\sfG (-,p)} (\iota) =  \pi_\calO s (\iota) =
  \pi_\calO t (\gamma).
\end{split}
\end{equation}
Hence $\Pi (\gamma ) \in \sfG_{|\calO}$ indeed.
As has been shown in Step 3 of the proof of \cite[Thm.~9.1]{w:linear2},
$\Pi (g)$ does not depend on the choices made, and is a retraction
onto $\sfG_{|\calO}$. Moreover, it has been shown there that
$\Pi (\gamma \, \gamma') = \Pi (\gamma) \, \Pi (\gamma')$ for
$\gamma,\gamma ' \in \sfG_{|\Tu^\varepsilon_\calO}$ with
$s(\gamma) = t (\gamma')$. Together with Eq.~\eqref{Eq:Compst}
this shows that $\Pi$ is a morphism of groupoids over the
map $\pi_\calO$ on objects. Now we define
\begin{equation}
\label{Eq:DefGrpdHom}
   \Theta := \big( \Pi , \exp_{|\Tu^\varepsilon_\calO}^{-1} \circ s \big) :
   \sfG_{|\Tu^\varepsilon_\calO} \rightarrow
   \big( \sfG_{|\calO} \ltimes N\calO \big)_{|\Tu^\varepsilon_{\calO,N\calO}} .
\end{equation}
Let us show that after possibly shrinking $\varepsilon$, $\Theta$
is an isomorphism of groupoids.   To this end let $K\subset \calO$ be a compact
submanifold with boundary of dimension $\dim \calO$. Then $K^\circ$ is a
manifold of finite type. Put $\varepsilon_K := \min\limits_{q\in K} \varepsilon
(q)$. Then by Step 4 of the proof of \cite[Thm.~9.1]{w:linear2},
the map
\[
  \Theta_K : \sfG_{| \Tu^{\varepsilon_K}_{K^\circ}} \rightarrow
  \big( \sfG_{|K^\circ} \ltimes N K^\circ \big)_{\Tu^{\varepsilon_K}_{K^\circ,N
  K^\circ}}
\]
is an open embedding. Observe that $\calO$ can be exhausted by a countable
number of compact manifolds with boundary $K_n$, $n\in \N$ such that $K_n^\circ$ is open
in $\calO$. After having defined the $\varepsilon_{K_n}$ as above shrink
$\varepsilon$ such that $\varepsilon (q) \leq \max\limits_{ \{ n  \in \N \mid q \in K_n^\circ\} }
\varepsilon_{K_n} (q)$. One checks immediately that then $\Theta$ has to be a
diffeomorphism. According to \cite[Thm.~9.1]{w:linear2} each of the
$\Theta_K$ is a morphism of groupoids, hence $\Theta$ has to be a morphism of
groupoids as well. This proves the following result:

\begin{theorem}
\label{Thm:SliceThm2}
  Let $\calO$ be an orbit in a proper Lie groupoid $\sfG$. Then there  
  exists a neighborhood $U$ of $\calO$ in $\sfG_0$ such that $\sfG_{|U}$ is isomorphic to a  
  neighborhood of the zero section of $\sfG_{|\calO}\ltimes N\calO$. More precisely:
  Let $\eta$ be a transversally invariant riemannian metric
  on $\sfG_0$.  Then there exists a
  continuous map $\varepsilon : \calO \rightarrow \R_{>0}$ and a submersive
  retraction $\Pi: \sfG_{|\Tu^\varepsilon_\calO}  \rightarrow \sfG_{|\calO}$
  such that $\Theta: \sfG_{|\Tu^\varepsilon_\calO} \rightarrow
   \big( \sfG_{|\calO} \ltimes N\calO \big)_{|\Tu^\varepsilon_{\calO,N\calO}}$
 as defined by Eq.~\eqref{Eq:DefGrpdHom} is an isomorphism of groupoids,
 and such that the following two diagrams commute:
 \begin{equation}
  \label{GrpdHomDiag2}
    \xymatrix{\sfG_{|\Tu^\varepsilon_\calO}
    \ar[r]^{s}& \Tu^\varepsilon_\calO\\
    \big( \sfG_{|\calO} \ltimes N\calO
    \big)_{|\Tu^\varepsilon_{\calO,N\calO}}  \ar[u]_{\Theta^{-1}}  \ar[r]_{\qquad s} &
    \ar[u]_{\exp_{|\Tu^\varepsilon_{\calO,N\calO}}}
    \Tu^\varepsilon_{\calO,N\calO}}
    \quad
    \xymatrix{\sfG_{|\Tu^\varepsilon_\calO}
    \ar[r]^{t}& \Tu^\varepsilon_\calO\\
    \big( \sfG_{|\calO} \ltimes N\calO
    \big)_{|\Tu^\varepsilon_{\calO,N\calO}} \ar[u]_{\Theta^{-1}} \ar[r]_{\qquad t}
    & \ar[u]_{\exp_{|\Tu^\varepsilon_{\calO,N\calO}}}
    \Tu^\varepsilon_{\calO,N\calO}}
\end{equation}
In case there is an open embedding $\sfH \hookrightarrow \sfG$ 
of proper Lie groupoids such that the closure of $\calO \cap \sfH_0$ 
is a compact subset, then the function 
$\varepsilon :\calO \rightarrow \R_{>0}$ can be chosen to be constant over 
$\calO \cap \sfH_0$.
\end{theorem}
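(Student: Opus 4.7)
The bulk of the theorem is essentially proved by the construction carried out in the paragraphs preceding the statement: starting from the transversally invariant metric $\eta$ (Prop.~\ref{Prop:InvMet}), the exponential map produces the tubular neighborhood $\Tu^\varepsilon_\calO$; Zung's theorem (Thm.~\ref{Thm:SliceThm}) together with Prop.~\ref{Prop:Bisec} yields the isotropy homomorphism $\lambda$ and the nesting property \eqref{Eq:ProjProp}; Thm.~\ref{thm:mettub} supplies a $\sfG_p$-equivariant tubular neighborhood of $\sfG(-,p)$ in $\sfG(-,Z_p)$ compatible with $\pi_\calO \circ s$; and these combine to define the retraction $\Pi$ and the candidate morphism $\Theta = (\Pi, \exp^{-1}\circ s)$ of \eqref{Eq:DefGrpdHom}. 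The commutative squares \eqref{GrpdHomDiag2} follow at once from \eqref{Eq:Compst}, and Step~3 of the proof of \cite[Thm.~9.1]{w:linear2} establishes independence of the auxiliary arrows $\iota, \kappa$ together with multiplicativity $\Pi(\gamma \gamma') = \Pi(\gamma)\Pi(\gamma')$, so that $\Theta$ is indeed a morphism of Lie groupoids over $\pi_\calO$ on units.

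It remains to argue that $\Theta$ is a diffeomorphism and then to prove the refined statement about $\varepsilon$. For the first step, I would exhaust $\calO$ by interiors of compact manifolds-with-boundary, $\calO = \bigcup_n K_n^\circ$, so that each $K_n^\circ$ is of finite type. Setting $\varepsilon_{K_n} := \min_{K_n} \varepsilon > 0$, Step~4 of \cite[Thm.~9.1]{w:linear2} applies on each piece and shows that the restriction $\Theta_{K_n}$ to $\sfG_{|\Tu^{\varepsilon_{K_n}}_{K_n^\circ}}$ is an open embedding. A final pointwise shrinking imposing $\varepsilon(q) \leq \max\{\varepsilon_{K_n}(q) : q \in K_n^\circ\}$ ensures that the $\Theta_{K_n}$ cover all of $\sfG_{|\Tu^\varepsilon_\calO}$, so $\Theta$ is a local diffeomorphism everywhere; since it is also fibred over the open embedding $\exp$ on objects, it is globally a diffeomorphism and hence an isomorphism of groupoids.

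The main extra point, and the one I expect to require slightly more care, is the last sentence concerning an open embedding $\sfH \hookrightarrow \sfG$ with $\overline{\calO \cap \sfH_0}$ compact. The key observation is that every shrinking step in the construction above imposes only a pointwise upper-bound condition of the form $\varepsilon \leq \delta_j$ for some continuous positive function $\delta_j : \calO \to \R_{>0}$. Since orbits are closed submanifolds of $\sfG_0$, the set $\overline{\calO \cap \sfH_0}$ is a compact subset of $\calO$, so the continuous positive function $\varepsilon$ attains there a strictly positive minimum $\varepsilon^0 := \min_{\overline{\calO \cap \sfH_0}} \varepsilon$. Choosing $\varepsilon^0$ small enough also to satisfy $\varepsilon^0 \leq \varepsilon(p)$ and then replacing $\varepsilon$ throughout the construction by the continuous function $\min(\varepsilon, \varepsilon^0)$ yields a new function pointwise dominated by the original $\varepsilon$, so every upper-bound condition remains valid; by construction this new function equals the constant $\varepsilon^0$ identically on $\calO \cap \sfH_0$, which establishes the refinement and completes the proof.
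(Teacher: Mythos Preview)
Your proposal is correct and follows the paper's own argument essentially step for step: the construction of $\Pi$ and $\Theta$, the appeal to Steps~3 and~4 of \cite[Thm.~9.1]{w:linear2}, and the exhaustion of $\calO$ by the interiors $K_n^\circ$ with the subsequent shrinking $\varepsilon(q)\leq\max\{\varepsilon_{K_n}:q\in K_n^\circ\}$ are exactly what the paper does. Your third paragraph on the constant-$\varepsilon$ refinement over $\calO\cap\sfH_0$ is in fact more explicit than the paper, which states this conclusion without further comment; your argument via $\varepsilon^0=\min_{\overline{\calO\cap\sfH_0}}\varepsilon$ and the replacement $\varepsilon\mapsto\min(\varepsilon,\varepsilon^0)$ is the intended one.
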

\begin{remark}
In general, the tubular neighborhood  $\Tu^\varepsilon_\calO$ is
{\em not} $\sfG$-invariant. However, in  the case where 
$\varepsilon$ can be chosen to be constant, it follows that
$\Tu^\varepsilon_\calO$ is invariant. This is an important
distinction between such orbits and the general case. See
\cite{cs} for an interesting and careful discussion.
\end{remark}

\section{The canonical stratification of proper Lie groupoids}
\label{sec:stratification}
\subsection{Stratification}
\label{stratification}
Now consider a point $p \in \sfG_0$ in the orbit $\calO$, and choose a
slice $Z\subset \sfG_0$ to the orbit $\calO$ at $p$.
After possibly shrinking $Z$, there exists, by Zung's theorem stated above, an
isomorphism of groupoids
\[
 \varphi: \sfG_{|Z}\rightarrow \sfG_p\ltimes V_p,
\]
with $V_p$ a neighborhood of the origin in $N_p\calO$. This isomorphism induces
in particular an action of $\sfG_p$ on $Z$. Let $S_{Z,p}:= Z^{\sfG_p}$  be the fixed
point manifold of $\sfG_p$ in $Z$, and let $\calS_p$ denote the set germ of the
projection of $S_{Z,p}$ in $X$, that means let
\begin{equation}
  \label{Eq:DefStrat}
  \calS_p := [\orbproj (S_{Z,p}) ]_\calO .
\end{equation}
Then we have the following results.
\begin{lemma}\label{lem:slices}
  The set germ $\calS_p$ does not depend on the particular choice of the slice
  $Z$ to the orbit $\calO$ at $p$ and the isomorphism
  $\varphi: \sfG_{|Z}\rightarrow \sfG_p\ltimes V_p$.
\end{lemma}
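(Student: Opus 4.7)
The plan is to establish the two independence statements in the lemma separately: first independence of the isomorphism $\varphi$ for a fixed slice $Z$, then independence of the slice itself.

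For the first step I would give an intrinsic description of $S_{Z,p}$ that avoids any reference to $\varphi$. Any isomorphism $\varphi:\sfG_{|Z}\to\sfG_p\ltimes V_p$ which sends $p$ to $0\in V_p$ restricts at each point $q\in Z$ to a Lie group isomorphism from $\sfG_q$ onto the stabilizer $\operatorname{Stab}_{\sfG_p}(\varphi_0(q))\subseteq\sfG_p$. The condition that $q$ is a fixed point of the $\sfG_p$-action on $Z$ induced by $\varphi$ is precisely that this stabilizer is all of $\sfG_p$. Since $\sfG_p$ is a compact Lie group and $\sfG_q$ embeds into it as a closed subgroup, this is equivalent to the purely intrinsic numerical condition
\[
\dim\sfG_q=\dim\sfG_p\quad\text{and}\quad |\pi_0(\sfG_q)|=|\pi_0(\sfG_p)|,
\]
which depends only on the restricted groupoid $\sfG_{|Z}$, not on $\varphi$. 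Hence $S_{Z,p}$ is determined by $Z$ and $p$ alone.

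For the second step, let $Z$ and $Z'$ be two slices at $p$. I would apply Prop.~\ref{Prop:Bisec} to obtain, after possibly shrinking both slices, a local bisection $\sigma$ defined on an open neighborhood of $Z'$ such that $\psi:=t\circ\sigma_{|Z'}:Z'\to Z$ is a diffeomorphism. For every $q'\in Z'$ the arrow $\sigma(q')$ connects $q'$ with $\psi(q')$, so these points lie in the same $\sfG$-orbit (giving $\orbproj(q')=\orbproj(\psi(q'))$) and conjugation by $\sigma(q')$ yields a Lie group isomorphism $\sfG_{q'}\cong\sfG_{\psi(q')}$. Combined with the intrinsic characterization from the first step, this shows that $\psi$ restricts to a diffeomorphism $S_{Z',p}\xrightarrow{\sim}S_{Z,p}$ on some neighborhood of $p$, and consequently
\[
\orbproj(S_{Z',p})=\orbproj(S_{Z,p})
\]
in a neighborhood of $\orbproj(\calO)$ in $X$, which is the required equality of set germs.

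The one point where care is needed is the intrinsic characterization: one must verify that the subgroup relation ``$\sfG_q$ is carried to all of $\sfG_p$ under $\varphi$'' coincides with the abstract numerical criterion, which relies on the fact that a closed subgroup of a compact Lie group equals the full group iff it has the same dimension and same number of connected components. Once this is in place, Step 2 is essentially formal given Prop.~\ref{Prop:Bisec}.
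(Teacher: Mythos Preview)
Your proposal is correct and close in spirit to the paper's proof, but organized differently. Both arguments use Prop.~\ref{Prop:Bisec} to relate two slices via a local bisection. The paper, however, does not separate the two independence claims: it takes a second slice $Z'$ with its own linearization $\varphi'$, conjugates by the bisection to obtain a groupoid isomorphism $\Phi:\sfG_{|Z}\to\sfG_{|Z'}$, and then argues that the composite $\Psi=\varphi'\circ\Phi\circ\varphi^{-1}:\sfG_p\ltimes V_p\to\sfG_p\ltimes V'_p$ carries $V_p^{\sfG_p}$ into $(V'_p)^{\sfG_p}$, thereby handling both choices at once. Your route instead first gives an intrinsic characterization of $S_{Z,p}$ in terms of $\dim\sfG_q$ and $|\pi_0(\sfG_q)|$, making independence of $\varphi$ immediate, and then transports this characterization via the bisection. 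What your approach buys is transparency: it isolates the key compact-group fact (a closed subgroup with the same dimension and number of components equals the whole group), which the paper's step ``$\Psi$ sends fixed points to fixed points'' also relies on but leaves implicit---indeed the paper records essentially this fact later as Lemma~\ref{lem:group}. The paper's version is more compact; yours is conceptually cleaner.
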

\begin{proof}
Let $Z'$ be a second submanifold through $p$ which is transversal to the orbit $\calO$
through $p$. Choose a local bisection $\sigma : U \rightarrow \sfG$ according to
Proposition \ref{Prop:Bisec} such that $t\circ \sigma$ is a diffeomorphism from $Z$
onto $Z'$. We then can define a morphism of groupoids
$\Phi : \sfG_{|Z} \rightarrow \sfG_{|Z'}$ by putting
\begin{displaymath}
  \Phi (g) := \big( \sigma (t(g)) \big) g \big( \sigma (s (g) \big)^{-1} .
\end{displaymath}
One checks immediately, that $\Phi$ is an isomorphism of groupoids, and that
$\Phi_0 $ is given by
\begin{displaymath}
  Z \ni q \mapsto t (\sigma (q))\in Z'.
\end{displaymath}

In the next step, we use Zung's Theorem again to obtain an isomorphism of groupoids
$\varphi': \sfG_{|Z'}\rightarrow \sfG_p\ltimes V'_p$, where $V_p'$  is another
$\sfG_p$-invariant neighborhood of the origin in the normal space $N_p \calO$.
Then consider the composition
\[
  \Psi := \varphi' \circ \Phi \circ \varphi^{-1} :
  \sfG_p\ltimes V'_p \rightarrow \sfG_p\ltimes V'_p .
\]
By construction, $\Phi$ is an isomorphism of groupoids. Let $ q \in S_{Z,p} $. Then
$\varphi (q)$ is an element of the invariant space $V_p^{\sfG_p}$. Hence
$\Psi (\varphi (q)) = \varphi' (\Phi (q)) $ is an element of the invariant space
$(V_p')^{\sfG_p}$, and consequently $\Phi (q) \in S_{Z',p} $. Since $q$ and
$\Phi (q)$ lie in the same orbit, one obtains 
$\orbproj (S_{Z,p}) \subset \orbproj  (S_{Z',p})$.
By symmetry, we have $\orbproj  (S_{Z',p}) \subset \orbproj  (S_{Z,p})$. 
The claim follows.
\end{proof}
\begin{lemma}\label{lem:point}
  If $p,q$ are in the same orbit of $\sfG$, then $\calS_p = \calS_q$.
\end{lemma}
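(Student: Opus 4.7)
The plan is to reduce to the setup of Lemma \ref{lem:slices} by transporting a slice at $q$ to a slice at $p$ via a local bisection, and then show that this transport maps the fixed-point manifold at $q$ onto the one at $p$. Since $p$ and $q$ lie in the same orbit, there exists an arrow $g \in \sfG_1$ with $s(g)=q$, $t(g)=p$, and by Proposition \ref{Prop:Bisec} (applied after replacing $Z$ and $Z'$ by suitable slices through $p$ and $q$) we may find a local bisection $\sigma$, defined on a neighborhood of $Z'$, with $\sigma(q)=g$ and such that $t\circ\sigma_{|Z'}:Z'\to Z$ is a diffeomorphism. As in the proof of Lemma \ref{lem:slices}, the formula $\Phi(h):=\sigma(t(h))\,h\,\sigma(s(h))^{-1}$ then defines an isomorphism of Lie groupoids $\Phi:\sfG_{|Z'}\to\sfG_{|Z}$ with $\Phi_0(q)=p$.

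Next I would verify that $\Phi_0(S_{Z',q})=S_{Z,p}$ (after shrinking). Let $z'\in S_{Z',q}$. Via Zung's isomorphism $\sfG_{|Z'}\cong\sfG_q\ltimes V'_q$, the point $z'$ corresponds to a fixed point of the $\sfG_q$-action, so the isotropy subgroup $\sfG_{z'}\subseteq\sfG_{|Z'}$ is mapped isomorphically onto the full group $\sfG_q$. Since $\Phi$ is a groupoid isomorphism, it restricts to a Lie group isomorphism $\sfG_{z'}\xrightarrow{\cong}\sfG_{\Phi_0(z')}$. Composing with the abstract isomorphism $\sfG_q\cong\sfG_p$ given by conjugation with $g$, we obtain $\sfG_{\Phi_0(z')}\cong\sfG_p$. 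Now using Zung's isomorphism $\sfG_{|Z}\cong\sfG_p\ltimes V_p$, the isotropy $\sfG_{\Phi_0(z')}$ is identified with the stabilizer $\mathrm{Stab}_{\sfG_p}(v)$ for $v$ corresponding to $\Phi_0(z')$. Thus $\mathrm{Stab}_{\sfG_p}(v)$ is a closed subgroup of the compact Lie group $\sfG_p$ abstractly isomorphic to $\sfG_p$ itself; matching dimensions force containment of the identity component, and matching numbers of connected components force equality, so $\mathrm{Stab}_{\sfG_p}(v)=\sfG_p$. Hence $\Phi_0(z')\in S_{Z,p}$. Applying the same argument to $\Phi^{-1}$ yields the reverse inclusion, and therefore $\Phi_0$ restricts to a diffeomorphism $S_{Z',q}\to S_{Z,p}$ on sufficiently small neighborhoods of $q$ and $p$.

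Finally, since $\Phi_0(z')=t(\sigma(z'))$ and $s(\sigma(z'))=z'$, every point $z'$ lies in the same $\sfG$-orbit as $\Phi_0(z')$, i.e.\ $\orbproj\circ\Phi_0=\orbproj$ on $Z'$. Consequently
\[
  \orbproj(S_{Z,p})=\orbproj(\Phi_0(S_{Z',q}))=\orbproj(S_{Z',q})
\]
on neighborhoods of $p$ and $q$, which gives the equality of germs $\calS_p=\calS_q$ at $\calO\in X$.

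The main obstacle I anticipate is the second paragraph: passing from the abstract group isomorphism $\sfG_{\Phi_0(z')}\cong\sfG_p$ to the concrete statement that the stabilizer in $\sfG_p$ is all of $\sfG_p$. This is where the properness hypothesis (giving compact isotropy) is crucial, and it is precisely what prevents an artificially ``smaller'' stabilizer from appearing inside $\sfG_p$ even when the two are abstractly isomorphic.
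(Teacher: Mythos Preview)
Your argument is correct. It differs from the paper's proof mainly in the level at which the comparison between $p$ and $q$ is carried out. The paper works directly with the linear normal representation: the arrow $g$ induces, via the canonical action of $\sfG_{|\calO}$ on $N\calO$, a linear isomorphism $N_p\calO\to N_q\calO$ that is equivariant with respect to the conjugation isomorphism $\sfG_p\cong\sfG_q$, so fixed-point sets correspond immediately and the compact-subgroup trick is unnecessary. Your route instead transports the slices via a bisection, exactly as in the proof of Lemma~\ref{lem:slices}, and then recovers the fixed-point correspondence from the resulting groupoid isomorphism; this is where the extra step ``closed subgroup of a compact Lie group abstractly isomorphic to the whole group must equal it'' (which the paper records later as Lemma~\ref{lem:group}) enters. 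Both approaches finish with the same observation that the bisection preserves $\sfG$-orbits, so the images under $\orbproj$ agree. Your version is more explicit and makes the role of properness (compact isotropy) visible; the paper's is shorter because the equivariance of the linear $g$-action on normal spaces does that work for free.
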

\begin{proof}
Any $g\in\sfG_1$ with $s(g)=p$ and $t(g)=q$ defines an isomorphism
$\sfG_p\cong\sfG_q$ by conjugation.
This isomorphism is independent of $g\in s^{-1}(p)\cap t^{-1}(q)$ up to conjugation
in $\sfG_q$ (or $\sfG_p$).
Furthermore, the element $g$ induces a $\sfG_p$-$\sfG_q$ equivariant isomorphism from
$N_p\calO$ onto $N_q\calO$. Therefore, in the quotient the two set germs $\calS_p$
and $\calS_q$ are canonically isomorphic.
\end{proof}
 By the last lemma, $\calS_p$ depends only on the projection $\orbproj (p)$, hence 
 we will identify  $\calS_\calO$ for $\calO\in X$ with $\calS_p$ where 
 $\orbproj (p)=\calO$.
 \begin{theorem}
 \label{strat-orbit}
   The mapping which associates to every point $\calO\in X$ the set germ
   $\calS_\calO$ in $X$  is a stratification of $X$.
 \end{theorem}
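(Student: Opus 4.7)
The plan is to reduce the problem locally to the case of a compact Lie group acting linearly on a vector space, where the orbit type stratification is already known (Example \ref{ex:action}), and then identify the germs $\calS_\calO$ with the germs of the orbit type strata.

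First, I would fix a point $\calO\in X$ with representative $p\in \sfG_0$, and invoke Theorem \ref{Thm:SliceThm2} to choose a continuous $\varepsilon:\calO\to \R_{>0}$ and an isomorphism of groupoids $\sfG_{|\Tu^\varepsilon_\calO}\cong \big(\sfG_{|\calO}\ltimes N\calO\big)_{|\Tu^\varepsilon_{\calO,N\calO}}$. Combined with Proposition \ref{loc-morita} applied to the slice $Z_p=\exp(B_{\varepsilon(p)}(0_p))$ at $p$, and with Zung's theorem (Theorem \ref{Thm:SliceThm}) giving $\sfG_{|Z_p}\cong \sfG_p\ltimes V_p$ for a $\sfG_p$-invariant ball $V_p\subset N_p\calO$, I obtain a homeomorphism between an open neighborhood of $\calO$ in $X$ and the quotient $V_p/\sfG_p$.

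Next, I would invoke Example \ref{ex:action}: the proper (in fact compact) action of $\sfG_p$ on $V_p$ induces a decomposition $\calZ$ of $V_p/\sfG_p$ into the orbit type pieces $(V_p)_{(H)}/\sfG_p$, where $H$ ranges over conjugacy classes of closed subgroups of $\sfG_p$. Transporting $\calZ$ via the local homeomorphism yields a decomposition $\calZ_\calO$ of an open neighborhood $U_\calO$ of $\calO$ in $X$. It remains to check that for every $\calO'\in U_\calO$ the germ $\calS_{\calO'}$ (defined by \eqref{Eq:DefStrat}) agrees with the germ of the stratum $[(V_p)_{(\sfG_{p'})}/\sfG_p]_{\calO'}$ containing $\calO'$, where $p'\in V_p$ is a representative of $\calO'$.

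The key identification, which I expect to be the main technical step, goes as follows. By Lemma \ref{lem:slices} the germ $\calS_{\calO'}$ is independent of the choice of slice, so I may compute it using any slice at $p'$. Pick a slice $Z'\subset V_p$ at $p'$ inside the ambient $V_p$ (transversality is ensured by Corollary \ref{cor:local}); since $V_p$ was already transverse to $\calO$ at $p$, the submanifold $Z'$ is also a slice to the $\sfG$-orbit through $p'$ in $\sfG_0$. The stabilizer of $p'$ for the $\sfG_p$-action is $\sfG_{p'}$ (up to the canonical identification coming from the isomorphism $\sfG_{|Z_p}\cong \sfG_p\ltimes V_p$), and by the standard local analysis for compact Lie group actions the intersection $Z'\cap (V_p)_{(\sfG_{p'})}$ coincides with the fixed point manifold $(Z')^{\sfG_{p'}}=S_{Z',p'}$ in a sufficiently small neighborhood of $p'$. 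Projecting to $X$ and taking germs at $\calO'$ then gives $\calS_{\calO'}=[\orbproj(S_{Z',p'})]_{\calO'}=[\orbproj((V_p)_{(\sfG_{p'})})]_{\calO'}$, which is precisely the germ of the stratum of $\calZ_\calO$ through $\calO'$.

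Finally, since every germ $\calS_{\calO'}$ is locally induced by the decomposition $\calZ_\calO$ on the neighborhood $U_\calO$ (and each piece of $\calZ_\calO$ is a locally closed smooth manifold with the required incidence relations by Example \ref{ex:action}), the assignment $\calO\mapsto\calS_\calO$ fulfills Definition \ref{dfn:stratified}, and thus defines a stratification of $X$. The main obstacle is the comparison argument in the previous paragraph, which requires patching the abstract definition \eqref{Eq:DefStrat} of $\calS_{\calO'}$ (via an arbitrary slice at $p'$) with the concrete orbit type decomposition of the linearized model $V_p/\sfG_p$; Lemma \ref{lem:slices} and Lemma \ref{lem:point} are exactly the tools that make this patching well-posed.
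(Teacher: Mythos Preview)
Your proposal is correct and follows essentially the same strategy as the paper's proof: reduce to the linear model $V_p/\sfG_p$ via Zung's theorem, invoke the known orbit type stratification there (Example \ref{ex:action}), and then for each nearby orbit $\calO'$ choose a slice $Z'$ (the paper calls it $D$) inside $V_p$ that is simultaneously a slice for the ambient groupoid $\sfG$, so that the fixed point set $(Z')^{\sfG_{p'}}$ matches the orbit type piece $(V_p)_{(\sfG_{p'})}$ locally and hence $\calS_{\calO'}$ agrees with the orbit type germ. The only cosmetic difference is that you route the local identification of a neighborhood of $\calO$ in $X$ with $V_p/\sfG_p$ through the metric tubular neighborhood Theorem \ref{Thm:SliceThm2}, whereas the paper obtains it directly from Zung's theorem on the slice together with Proposition \ref{loc-morita}; the heavier result is not needed here, but it does no harm.
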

\begin{proof}
  Let $\calO_p \in X =\sfG_0/\sfG$. Choose a slice $Z\subset \sfG_0$
  to the orbit $\calO_p$ at $p$ together with an isomorphism of groupoids
  $\varphi : \sfG_{|Z} \rightarrow \sfG_p \ltimes V_p$ as above. It is well-known
  that the orbit space $V_p/\sfG_p \cong Z/\sfG_{|Z} $ has a canonical stratification
  by orbit types. Denoting by $\varrho : V_p \rightarrow V_p/\sfG_p$ the canonical
  projection, this stratification is given by
  \begin{displaymath}
    \calT_{\varrho (v)} = \big[ \varrho (V_p^{(\sfG_{p,v})}\big]_{\varrho (v)},
  \end{displaymath}
  where $v \in V_p$, $\sfG_{p,v} \subset \sfG_p$ is the isotropy group of $v$,
  and $V_p^{(\sfG_{p,v})}$ is the subspace of all elements of $V_p$ having an
  isotropy group conjugate to $\sfG_{p,v}$.
  The claim is thus proven if under the isomorphism $V_p/\sfG_p \cong Z/\sfG_{|Z} $
  we can show that $\calT_{\varrho ( \varphi (q))} = \calS_{\orbproj (q)}$ for all $q\in Z$.
  To this end denote first by $\sfG_q$ the isotropy group of $q$. Since $\sfG_{|Z}$
  is isomorphic to $V_p\rtimes \sfG_p$, the isotropy group $\sfH$ can be identified
  with a subgroup $\sfH$ of $\sfG_p$. Note that this identification depends on
  $\varphi$. Let $Z^{(\sfH)}$ be the space of all elements
  $b\in Z$ having isotropy group conjugate to $\sfH$. Now choose a submanifold
  $D\subset Z$ through $q$ which is transversal to the $\sfG_p$-orbit through $q$.
  Note that by the slice theorem for group actions one can choose $D$ in such a way
  that $D$ is invariant under $\sfH$ and such that $D\cap Z^{\sfH} = V \cap Z^{\sfH}$
  for some open neighborhood $V\subset Z$ of $q$. By construction it is
  clear that $D$ is also transversal to the $\sfG$-orbit through $q$ in $\sfG_0$.
  Moreover,
  \[
    D^{\sfG_q}  = D^{\sfH} = D \cap Z^{\sfH} = V \cap Z^{\sfH} ,
  \]
  hence we obtain for the set germs at $\orbproj (q)$:
  \[
   \calS_{\orbproj (q)} = \big[\orbproj (D^{\sfG_q})\big]_{\orbproj (q)} =
   \big[ \orbproj (Z^{\sfH}) \big]_{\orbproj (q)}
   \cong \big[ \varrho (V_p^{\sfH}) \big]_{\varrho (\varphi (q))} =
   \big[ \varrho (V_p^{(\sfH)}) \big]_{\varrho (\varphi (q))}
   = \calT_{\varrho (\varphi (q))}.
  \]
  Thus the claim follows.
\end{proof}

\begin{corollary}\label{cor:stratification}
  The quotient space $X$ of a proper Lie groupoid $\sfG$ carries in a natural
  way the structure of a stratified space with smooth structure. Moreover, $X$
  is even Whitney stratified and has a system of smooth control data.
\end{corollary}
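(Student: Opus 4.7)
The first claim, that $X$ carries a stratification, is already established by Theorem \ref{strat-orbit}, with strata given by the germs $[\orbproj(S_{Z,p})]_\calO$ at each orbit $\calO$. The remaining tasks are: (i) equip $X$ with a compatible smooth structure; (ii) verify Whitney's condition B in the resulting singular charts; and (iii) deduce the existence of smooth control data.

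For the smooth structure, the plan is to build singular charts from the local slice model. Given any orbit $\calO$ and $p \in \calO$, Corollary \ref{cor:local} (combined with Theorem \ref{Thm:SliceThm} and Proposition \ref{Prop:sloctriv}) provides a canonical homeomorphism between a neighborhood of $\orbproj(p)$ in $X$ and $V_p / \sfG_p$, where $V_p$ is a $\sfG_p$-invariant open ball in $N_p\calO$ and $\sfG_p$ is compact. I would then invoke Schwarz's theorem on smooth invariants: a finite collection of polynomial invariants $\sigma_1, \ldots, \sigma_N$ generates all $\sfG_p$-invariant smooth functions on $V_p$, and the associated Hilbert map $\sigma : V_p \to \R^N$ descends to a topological embedding $V_p / \sfG_p \hookrightarrow \R^N$. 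This map serves as a singular chart for $X$ at $\orbproj(p)$.

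The delicate point in this construction is verifying compatibility of two such singular charts coming from different choices of base point or slice. Here one uses that the isomorphism of local groupoid models underlying Lemmas \ref{lem:slices} and \ref{lem:point} intertwines the representations of the relevant isotropy groups; applying Schwarz's theorem componentwise then yields a smooth extension of the transition map to an open neighborhood in ambient Euclidean space. One finally checks that, restricted to each stratum, the chart is a diffeomorphism onto a submanifold, and that the orbit-type decomposition induced in the chart agrees with the stratification of Theorem \ref{strat-orbit}. This produces a $\calC^\infty$-atlas on $X$ compatible with $\calS$.

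Whitney's condition B is local in the singular charts, so the verification reduces to the classical fact that orbit spaces of linear representations of compact Lie groups satisfy Whitney B under the Hilbert embedding (see \cite{pf:book}). Granting this, an application of Theorem \ref{SaExKoDaWhi} to $X$ with the smooth structure just constructed immediately yields the required system of smooth control data. The principal obstacle in this outline is the chart compatibility step: one must keep careful track of how the choices underlying Lemmas \ref{lem:slices} and \ref{lem:point} interact with the Hilbert embeddings, to ensure that transitions genuinely extend to smooth diffeomorphisms in ambient Euclidean space rather than merely to continuous maps.
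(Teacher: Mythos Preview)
Your proposal is correct and follows the same broad architecture as the paper: local slice model $\to$ Schwarz's theorem $\to$ Whitney B $\to$ Theorem~\ref{SaExKoDaWhi}. The genuine difference lies in how the smooth structure is introduced. You construct singular charts pointwise via Hilbert embeddings and then face the task of verifying chart compatibility, which you rightly flag as the principal obstacle. The paper instead defines the smooth structure \emph{intrinsically} by declaring $\calC^\infty_X(U):=\calC^\infty_{\text{inv}}(\orbproj^{-1}(U))$, the sheaf of $\sfG$-invariant smooth functions on the preimage, and then proves that for a small slice $Z$ at $p$ the restriction map induces an isomorphism $\calC^\infty_X(\orbproj(Z))\cong(\calC^\infty(Z))^{\sfG_p}$. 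Injectivity is immediate from Proposition~\ref{loc-morita}; surjectivity requires showing that any $\sfG_p$-invariant smooth function on $Z$ extends to a $\sfG$-invariant smooth function on the saturation, which is done explicitly using bisections and the local product model of Corollary~\ref{cor:local}. Once this isomorphism is in hand, Schwarz's theorem furnishes the singular charts, and compatibility is automatic because all charts arise from a single globally defined sheaf. The trade-off: your approach stays closer to the atlas definition and is perhaps more concrete, but the paper's sheaf-theoretic definition makes naturality of the smooth structure manifest and replaces your delicate compatibility verification with a cleaner extension argument for invariant functions.
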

\begin{proof}
  According to the preceding theorem, the previously defined $\calS$ is a
  natural stratification of $X$. Let us show, that $(X,\calS)$ even has a
  canonical smooth structure. For every open $U\subset X$ let
  $\calC^\infty_X (U)$ be the space
  $\calC^\infty_\text{\tiny\rm inv} \big(\orbproj^{-1} (U)\big)  $ of
  $\sfG$-invariant smooth functions on $\orbproj^{-1} (U)$. Obviously, we thus get a
  sheaf on $X$. Let us show that $\calC^\infty_X$ is the structure sheaf of a
  smooth structure on $X$. To this end let $\calO_p \in X$ be an orbit, and choose
  a slice $Z \subset \sfG_0$ to the orbit $\calO_p$ at $p\in \sfG_0$.
  After possibly shrinking $Z$, $\orbproj (Z) $ is open in
  $X$. Consider the map
  \[
    r_Z : \calC^\infty_X \big( \orbproj (Z) \big) \rightarrow \calC^\infty (Z), \quad
    f \mapsto f\circ \orbproj_{|Z} .
  \]
  We show that after possibly shrinking $Z$ again,  $r_Z$ induces an isomorphism
  \begin{equation}
    \label{eq:locfuncstruc}
    \calC^\infty_X \big( \orbproj (Z) \big) \cong \big( \calC^\infty (Z) \big)^{\sfG_p} .
  \end{equation}
  By Prop.~\ref{loc-morita}, $r_Z$ is injective. To prove surjectivity,
  first choose an open neighborhood $U$ of $p$ according to Remark
  \ref{Rem:spectub}, i.e.~such that $\sfG_{|U}$ is isomorphic to the restriction
  of the transformation groupoid $\sfG_{|O} \ltimes NO$ to an open neighborhood
  of $O:= U \cap \calO_p$ in $NO$. After possibly shrinking $Z$ and $U$, one
  can achieve that $Z=U\cap Z$. By Prop.~\ref{Prop:Bisec} we can assume, after
  possibly shrinking $Z$ and $U$ again, that $Z \subset N_pO$. Shrinking $U$
  further, one can achieve that for every $q \in U$ there is an arrow
  $g_q \in \sfG_{|O}$ such that $g_q \cdot q \in Z$. Now
  let $h \in \big( \calC^\infty (Z) \big)^{\sfG_p}$.
  Define $H: U \rightarrow \R$ by $H(q) = h (g_q \cdot q)$. Then, by invariance
  of $h$, $H(q)$ is independent of the particular choice of $g_q$. Moreover,
  $H$ is smooth by construction. Next, let $q \in \operatorname{Sat} (Z)$, and
  choose a bisection $\sigma_q :V \rightarrow \sfG_1$ over an open
  neighborhood $V \subset \sfG_0$  such that $t (\sigma (V)) \subset U$.
  Define $f: \operatorname{Sat} (Z) \rightarrow \R$ by $f(q) = H(t(\sigma (q)))$.
  By construction, $f$ then is smooth, independent of the particular choices
  of the bisections $\sigma_q$ and invariant. Moreover, $f_{|Z} = h$.
  This proves surjectivity of $r_Z$.

  From the work \cite{Schwa:SFIACLG} one concludes that there exists a
  singular chart $\chx : Z/\sfG_p \rightarrow \R^n$ such that
  $\big( \calC^\infty (Z) \big)^{\sfG_p} \cong \chx^* \calC^\infty (W)$, where
  $W\subset \R^n$ is open with $\chx ( Z/\sfG_p ) \subset W$ being closed
  (see \cite{pf:book} for details). Moreover, the stratified space
  $\chx ( Z/\sfG_p )$ satisfies Whitney's condition B. This entails that
  $X$ carries a canonical smooth structure which satisfies
  Whitney's condition B in each singular chart. By Thm.~\ref{SaExKoDaWhi},
  $X$ is controllable. This proves the claim.
\end{proof}
\begin{remark}
We end with several remarks about this stratification:
\begin{itemize}
\item[$i)$]
  If $p\in \sfG_0$, the set germs $\calT_p := \orbproj^{-1} (\calS_p)$ define
  a stratification of the object space $\sfG_0$.
\item[$ii)$] In case $\sfG$ is \'etale, the quotient space $X$ is an orbifold and the stratification of Theorem \ref{strat-orbit} is the usual one.
\item[$iii)$] On the other extreme, if $\sfG$ is $s$-connected, collecting strata of the same dimension, one obtains a coarser stratification from Theorem \ref{strat-orbit} which agrees with the one of the singular riemannian foliation of Proposition \ref{prop-lie-SRF}, constructed in \cite[\S 6.2]{MolRF}. This follows from the fact that the orbits of the singular riemannian foliation induced by the groupoid $\sfG$ are given in terms of Lie-algebroid paths for $A$ which can be used to integrate the groupoid $\sfG$, cf.~\cite{cf}.
\end{itemize}
\end{remark}

\subsection{Orbit types}
In Theorem \ref{strat-orbit}, we have defined a stratifications of
$X$ and of $\sfG_0$ using set-germs. This local and conceptually 
very clear approach to stratification theory originates in the work of 
Mather. It has the advantage of avoiding the technical constructions of 
partitions of $X$ and $\sfG_0$ which define the stratification. In
the case of a proper $G$-action on a manifold $M$ (c.f.~Example 
\ref{ex:action}), there is a natural way to define the orbit type 
stratification of $M$ and also of $X=M/G$ by a partition
with respect to a certain ``orbit type" equivalence relation 
(cf.~\cite{dk:book}). 
Inspired by this construction we briefly discuss in this subsection
a natural decomposition of $\sfG_0$ and $X$ by ``orbit type".

\begin{definition}
\label{dfn:normal-orbit-type}
Let $\sfG$ be a proper Lie groupoid.
\begin{itemize}
\item[$(i)$] The {\em weak orbit type} of a point $p\in\sfG_0$ is the (abstract) isomorphism class of the isotropy group $\sfG_p$: We write $p\thicksim_o q$ if $\sfG_p\cong\sfG_q$ as Lie groups.
\item[$(ii)$] The {\em normal orbit type} of $p\in\sfG_0$ is the isomorphism class of the representation of $\sfG_p$ on  $N_p\calO$: we write $p\sim_n q$ if $p\thicksim_o q$ and the isomorphism $\sfG_p\cong\sfG_q$ extends to an isomorphism $N_p\calO\cong N_q\calO$ of representations.
\end{itemize}

\end{definition}
The adjective "weak" is put in $(i)$ to indicate that in the case of a proper Lie group action, 
this definition is a weaker version of the usual notion of orbit type, see below.
Clearly, both $\thicksim_o$ and $\thicksim_n$ define equivalence
relations on $\sfG_0$ and partition $\sfG_0$ into subsets of
$\sfG_0$. As two points on the same orbit have the same weak orbit type
and normal orbit type (Lemma \ref{lem:slices} and \ref{lem:point}),
$\thicksim_o$ and $\thicksim_n$ define also equivalence relations on
the orbit space $X=\sfG_0/\sfG$. Since the isomorphism classes of the
orbit type and normal orbit type are invariant under Morita
equivalence, the partitions on $X$ defined by $\thicksim_o$ and
$\thicksim_n$ are also Morita invariant. The main result proved in this 
section about these partitions is the following:
\begin{theorem}
\label{main-part}
Let $\sfG$ be a proper Lie groupoid.
\begin{itemize}
\item[$i)$] Each normal orbit type is an open and closed subset of its weak orbit type.
\item[$ii)$] The connected components of the partition into weak orbit types are locally closed submanifolds of $\sfG_0$ and form a decomposition of $\sfG_0$ in the sense of Definition \ref{dfn:decomposition}.
\item[$iii)$] On the level of set germs, the induced stratification of this decomposition agrees with the one defined in section \ref{stratification}.
\end{itemize}
\end{theorem}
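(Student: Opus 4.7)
The plan is to reduce everything to the local slice model of Corollary \ref{cor:local}: near any $p\in\sfG_0$ the groupoid is isomorphic to the product of a pair groupoid on an open ball $O\subset\calO_p$ and the action groupoid $\sfG_p\ltimes V_p$. The crucial algebraic observation I would first establish is that any closed subgroup $H\subseteq\sfG_p$ of the compact Lie group $\sfG_p$ which is abstractly Lie-isomorphic to $\sfG_p$ must equal $\sfG_p$: equality of dimensions forces $H$ to contain the identity component of $\sfG_p$, and then matching the orders of the finite component groups $|\pi_0(H)|=|\pi_0(\sfG_p)|$ yields $H=\sfG_p$. Consequently, in the slice model the weak orbit type through $p$ coincides with the fixed point set $V_p^{\sfG_p}$. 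Moreover, for any $v\in V_p^{\sfG_p}$ the $\sfG_p$-orbit of $v$ is trivial, so $N_v\calO_v$ is canonically $\sfG_p$-isomorphic to $V_p$ itself; hence the normal orbit type is also constant along $V_p^{\sfG_p}$ and equals that of $p$.

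With this local picture in hand, part $(i)$ follows quickly. Given any $q$ in the weak orbit type $W$ of $p$, the slice at $q$ exhibits $W$ locally as $V_q^{\sfG_q}$, on which the normal representation is constant. Thus membership in a given normal orbit type is locally constant on $W$, so each normal orbit type is simultaneously open and closed in $W$. For part $(ii)$, the same local identification makes each weak orbit type a submanifold of $\sfG_0$, so its connected components are locally closed submanifolds. Local finiteness of the partition follows from the finiteness of orbit types for the proper action of the compact group $\sfG_q$ on $V_q$. For the frontier condition $(\text{DC2})$, given components $R,S$ of weak orbit types with $q\in R\cap\overline S$, I would reduce to the slice $V_q$ using the $\sfG_q$-equivariant splitting $V_q=V_q^{\sfG_q}\oplus W_q$: since the isotropy of $(x,w)\in V_q^{\sfG_q}\oplus W_q$ depends only on $w$, each orbit type stratum $V_q^{(H)}$ has the product form $V_q^{\sfG_q}\times W_q^{(H)}$, and the classical closure relation $0\in\overline{W_q^{(H)}}$ immediately propagates from $q=(0,0)$ to give $R\cap V_q\subseteq\overline S$, after which connectedness of $R$ extends the inclusion globally.

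For part $(iii)$, I would compare set germs directly. The proof of Theorem \ref{strat-orbit} shows that the Mather stratum containing $p$ in $\sfG_0$ is locally the germ of $\orbproj^{-1}\orbproj(V_p^{(\sfG_p)})$, where $V_p^{(\sfG_p)}$ is the locus of points with isotropy $\sfG_p$-conjugate to $\sfG_p$. Since $\sfG_p$ is the only subgroup of $\sfG_p$ conjugate to itself, $V_p^{(\sfG_p)}=V_p^{\sfG_p}$; pulling back through the local product $U\cong O\times V_p$ gives $O\times V_p^{\sfG_p}$, which by the analysis above is precisely the local connected component of the weak orbit type through $p$. Hence the set germs of the two stratifications agree at every point. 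The main obstacle I anticipate is the careful verification of the frontier condition in $(ii)$, because transferring the classical orbit type closure relations from the $\sfG_q$-manifold $V_q$ to closure relations between connected components of weak orbit types in the ambient $\sfG_0$ requires bookkeeping of how the product decomposition interacts with connected components of both $V_q^{\sfG_q}$ and $W_q^{(H)}$.
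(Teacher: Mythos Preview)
Your proposal is correct and follows the same overall strategy as the paper: reduce everything to the local product model of Corollary~\ref{cor:local}, invoke the algebraic fact that a closed subgroup of a compact Lie group abstractly isomorphic to the whole group must equal it (this is the paper's Lemma~\ref{lem:group}), and identify the weak orbit type locally with $O\times V_p^{\sfG_p}$. Parts $(i)$ and $(iii)$ in your outline match the paper's Lemma~\ref{lem:component} and Proposition~\ref{prop:partition} essentially verbatim.

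The one place where you take a genuinely different route is the frontier condition in $(ii)$. The paper proves it via an auxiliary result (Lemma~\ref{lem:normal-orbit-type}): two points of the same normal orbit type have isomorphic restricted groupoids on suitable neighborhoods, and this isomorphism transports the property ``lies in $\overline{S}$'' from one point of $R$ to any other. You instead work directly with the $\sfG_q$-equivariant splitting $V_q=V_q^{\sfG_q}\oplus W_q$ and the product form $V_q^{(H)}=V_q^{\sfG_q}\times W_q^{(H)}$, then run an open--closed argument on the connected manifold $R$. Your route is more elementary in that it avoids introducing the extra lemma, but it does require the connected-component bookkeeping you flag: you must check that the nearby points of the correct normal orbit type actually lie in the component $S$, not merely in its orbit type. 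This is handled by the product structure (a path in the $O\times V_q^{\sfG_q}$ factor connects them to points already known to be in $S$), so the argument goes through. The paper's approach hides this step inside the groupoid isomorphism but is arguably no cleaner, since one still has to argue that the isomorphism respects the particular component $S$.
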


\subsubsection{Proper group action}
\label{pga}
Let us consider the action groupoid $G\ltimes M$ for a proper $G$-action
on $M$. Then one can define two natural equivalence relations 
on $M$. Put for $p,q\in M$
\begin{enumerate}
\item $p\thicksim q$, if $G_p$ is conjugate to $G_q$ in G, and
\item $p\thickapprox q$, if there is a $G$-equivariant diffeomorphism
$\Phi$ from an open $G$-invariant neighborhood $U_p$ onto an open
$G$-invariant neighborhood $U_q$ of $q$ such that $\Phi (p)=q$.
\end{enumerate}

The equivalence relation $p\thicksim q$ is stronger than $p\thicksim_o
q$ as there are isomorphisms between subgroups which are not
conjugation by an element in $G$. The following proposition determines
the relation between the partitions on $M$ defined by $\thicksim$
and $\thicksim_o$.
\begin{proposition}\label{prop:orbit-type}
For $p\in M$, there is an open neighborhood $U$ of $p$ such that in $U$ 
the relation $p\thicksim_o q$ holds true if and only if $p\thicksim q$.
\end{proposition}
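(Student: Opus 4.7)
One direction is immediate: if $p\thicksim q$ then $G_p$ and $G_q$ are conjugate in $G$, so in particular isomorphic as Lie groups, hence $p\thicksim_o q$. The content is the reverse implication in a neighborhood of $p$.

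The plan is to invoke the slice theorem for proper actions. Let $S$ be a $G_p$-invariant slice at $p$; then $U:=G\cdot S$ is an open $G$-invariant neighborhood of $p$ which is $G$-equivariantly diffeomorphic to the associated bundle $G\times_{G_p}S$. This $U$ will be the neighborhood claimed in the proposition. For any $q\in U$ one may, after replacing $q$ by a suitable translate on its orbit, assume $q\in S$, and then $G_q\subseteq G_p$. Conjugation of $G_q$ by an element of $G$ thus realises $G_q$ as a closed subgroup of the compact Lie group $G_p$.

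The essential step is the following fact about compact Lie groups: a closed subgroup $H\subseteq K$ of a compact Lie group $K$ which is abstractly isomorphic to $K$ as a Lie group must equal $K$. Indeed, an abstract isomorphism $H\cong K$ forces $\dim H=\dim K$, so the identity component $H^{0}$ is open in $K$ and therefore $H^{0}=K^{0}$; the induced isomorphism on component groups then shows that the finite groups $H/K^{0}$ and $K/K^{0}$ have the same order, and since $H/K^{0}\subseteq K/K^{0}$ this forces $H=K$.

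Applying this fact with $K=G_p$ and $H=G_q$ (viewed as the subgroup $G_q\subseteq G_p$ after the slice identification), the assumption $p\thicksim_o q$ yields $G_q=G_p$ as subgroups of $G$, and hence $p\thicksim q$. Since every $q\in U$ is handled this way, the proposition follows. The only step requiring care is the compact Lie group lemma above; all other ingredients are standard consequences of the slice theorem for proper Lie group actions.
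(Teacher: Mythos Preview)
Your proof is correct and follows essentially the same approach as the paper: both use the slice theorem to reduce to the case $G_q\subseteq G_p$ and then invoke the compact Lie group fact that a closed subgroup of a compact Lie group isomorphic to the ambient group must coincide with it. The paper isolates this fact as a separate lemma with a reference to the literature, whereas you supply a clean inline proof via identity components and component-group orders; otherwise the arguments are the same.
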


For the proof, we will need the following well known property about subgroups of a
Lie group $G$:
\begin{lemma}\label{lem:group}
 For a compact subgroup $H$ of $G$,
 every closed subgroup $H_0$ of $H$ that is isomorphic to $H$ as Lie
 groups is identical to $H$.
\end{lemma}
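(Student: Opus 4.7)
The plan is a short dimension-plus-component count, using compactness heavily. Write $H^\circ$ and $H_0^\circ$ for the identity components.

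First I would note that $H_0$, being a closed subgroup of the compact group $H$, is itself compact, so both $H$ and $H_0$ have only finitely many connected components. An abstract Lie group isomorphism $H_0 \cong H$ preserves dimension and the number of connected components, so
\[
  \dim H_0 = \dim H \quad \text{and} \quad |H_0/H_0^\circ| = |H/H^\circ|.
\]

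Next I would argue that $H_0^\circ = H^\circ$. Since $H_0$ is a Lie subgroup of $H$ of the same dimension as $H$, the inclusion $H_0 \hookrightarrow H$ is an immersion of maximal rank at the identity, hence a local diffeomorphism there; equivalently, $H_0^\circ$, being a connected Lie subgroup of $H$ of full dimension, is open in $H$. An open subgroup of a Lie group contains the identity component, so $H^\circ \subset H_0^\circ \subset H^\circ$, giving $H_0^\circ = H^\circ$.

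Finally, the natural map $H_0/H_0^\circ \to H/H^\circ$ is injective because $H_0 \cap H^\circ = H_0^\circ$. By the equality of cardinalities established above, this injection between finite sets is a bijection, so every coset of $H^\circ$ in $H$ meets $H_0$. Combined with $H^\circ \subset H_0$, this yields $H_0 = H$. The only thing that could go wrong is step~3, and even there the point is just the standard fact that a subgroup of full dimension in a Lie group is open; everything else is formal.
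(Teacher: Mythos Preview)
Your argument is correct: the dimension-and-component count works exactly as you describe, with the key point being that a closed subgroup of the same dimension is open, hence contains the identity component, after which the finite component count forces equality. The paper itself does not give a proof but merely refers to \cite[Lemma~4.2.9]{pf:book}; your self-contained argument fills in precisely the kind of details one would expect behind that citation.
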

\begin{proof}
We refer the reader to the arguments in the proof of \cite[Lemma
4.2.9]{pf:book}.
\end{proof}

\begin{proof}[Proof of Proposition \ref{prop:orbit-type}]
According to the slice theorem for group actions there 
exists a $G$-invariant open neighborhood $U$ of $p$ and a $G$-equivariant
diffeomorphism $U \rightarrow G\times_{G_p} B$, where 
$B$ is a $G_p$-invariant open neighborhood of $0$ in $N_p\calO_p$.
Hence we can assume without loss of generality that $M = G\times_{G_p} B$,
$p= [e,0]$ with $e$ being the identity element in $G$, and $q=[g,x]$ for some 
$g\in G$ and $x\in B$. Obviously, $p\thicksim q$ implies $p\thicksim_o q$. 
It remains to show that $p\thicksim_o q$ implies $p\thicksim q$.
Since $q$ lies in the same $G$-orbit as $[e,x]$, $G_q$ is conjugate to 
$G_{[e,x]}$. But $G_{[e,x]}$ is a closed subgroup of $G_p$. Since $G_p$ is 
compact, Lemma \ref{lem:group} implies that $G_{[e,x]}$ is identical to 
$G_p$. Therefore, $p\thicksim q$.
\end{proof}

\begin{remark}A similar result holds true for $\thicksim_n$ and $\thickapprox$ with the similar arguments as Proposition \ref{prop:orbit-type}. 
\end{remark}
\begin{proposition}\label{prop:strat-group-action}
The connected components of the partition defined by $\thicksim_o$ define 
the orbit type decomposition on $M$, which induce a stratification of the 
quotient $X=M/G$.
\end{proposition}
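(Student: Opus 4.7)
The plan is to reduce the statement to the classical orbit type stratification for proper Lie group actions (cf.\ Example \ref{ex:action} and \cite[4.3.7, 4.3.11]{pf:book}) using Proposition \ref{prop:orbit-type} as the bridge between the new and the classical notions of orbit type.

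First, I would localize the equivalence relation $\thicksim_o$. By Proposition \ref{prop:orbit-type}, every point $p\in M$ admits a $G$-invariant open neighborhood $U_p$ on which $\thicksim_o$ restricts to the classical orbit type relation $\thicksim$ (conjugacy of isotropy subgroups in $G$). If $p$ and $q$ lie in the same connected component of a $\thicksim_o$-class, any path in that class from $p$ to $q$ is covered by finitely many such neighborhoods, so one can chain together local applications of Proposition \ref{prop:orbit-type} to conclude that $p\thicksim q$. Hence the connected components of the $\thicksim_o$-equivalence classes coincide with the connected components of the classical orbit type sets
\[
    M_{(H)} \: = \: \{ x\in M \mid G_x \text{ is conjugate to } H \text{ in } G\}.
\]

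Next, I would invoke the classical orbit type theorem for proper Lie group actions, which states that the connected components of each $M_{(H)}$ are locally closed smooth submanifolds of $M$, that the resulting partition is locally finite, and that it satisfies the frontier condition (DC2) of Definition \ref{dfn:decomposition}. Combined with the identification above, this yields the desired decomposition of $M$, and this decomposition is precisely the orbit type decomposition described in Example \ref{ex:action}.

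Finally, I would pass to the quotient $X=M/G$. The decomposition of $M$ is $G$-invariant, and by the classical theory each quotient $M_{(H)}^\circ/G$ of a connected component is again a smooth manifold with the quotient topology. The family of such images forms a locally finite partition of $X$ by locally closed subspaces, and condition (DC2) descends along $\orbproj : M\to M/G$ from the corresponding property upstairs. This gives the decomposition of $X$, which by the procedure following Definition \ref{dfn:stratified} induces the claimed stratification. The only subtle point in this programme is the first step: the equivalence relations $\thicksim_o$ and $\thicksim$ need not globally agree because abstract isomorphisms of compact subgroups need not be inner, but Proposition \ref{prop:orbit-type} combined with the open-cover argument shows that they do agree after passage to connected components.
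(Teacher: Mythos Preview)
Your proof is correct and follows essentially the same approach as the paper: reduce to the classical orbit type decomposition via Proposition \ref{prop:orbit-type}, then invoke the standard result (the paper cites \cite[Thm.~2.7.4]{dk:book}). The paper's version is terser---it simply states that Proposition \ref{prop:orbit-type} forces the connected components of the $\thicksim_o$-partition to coincide with those of the $\thicksim$-partition---whereas you spell out a path-and-chain argument; a slightly cleaner alternative is to note that since $\thicksim$ refines $\thicksim_o$ and Proposition \ref{prop:orbit-type} makes each $\thicksim$-class open in its ambient $\thicksim_o$-class, the $\thicksim$-classes are open and closed there, so the connected components agree without invoking paths.
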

\begin{proof}
Proposition \ref{prop:orbit-type} implies that the connected components
of the partition defined by $\thicksim$ is identical to those
defined by $\thicksim_o$. This proposition follows from the similar
result for the equivalence relation $\thicksim$ as in
\cite[Thm.2.7.4]{dk:book}.
\end{proof}
\subsubsection{The decomposition of $\sfG_0$}
Now let us consider the case of a proper Lie groupoid and the equivalence
relations $\thicksim_o$ and $\thicksim_n$. 
Considering for $p\in \sfG_0$ the neighborhoods $O\times V_p$
described in Corollary \ref{cor:local} one concludes that the connected 
components of the partition defined by $\thicksim_o$ are submanifolds. 
Furthermore, Proposition \ref{prop:strat-group-action} implies the following
result:
\begin{proposition}\label{prop:partition}
The germs of the partition
defined by the orbit type $\thicksim_o$ on $\sfG_0$ and $X$ agree
with $\calS$ as defined by (\ref{Eq:DefStrat}).
\end{proposition}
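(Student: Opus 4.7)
The plan is to reduce everything to the group-action case already handled by Proposition \ref{prop:strat-group-action} via the local product structure of Corollary \ref{cor:local}. Since the statement concerns germs, it suffices to prove the identification of partitions in an arbitrarily small neighborhood of any chosen point $p\in\sfG_0$.

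First I would fix $p\in\sfG_0$ and apply Corollary \ref{cor:local} to obtain a neighborhood $U\cong O\times V_p$ on which $\sfG_{|U}$ is isomorphic to the product of the pair groupoid $O\times O\rightrightarrows O$ with the action groupoid $\sfG_p\ltimes V_p\rightrightarrows V_p$. Under this isomorphism, the isotropy group of a point $(x,v)\in O\times V_p$ is canonically identified with the $\sfG_p$-isotropy of $v$, since the pair groupoid factor contributes only trivial isotropy. Consequently, the partition of $U$ into weak orbit types of $\sfG$ coincides with the pullback under the projection $O\times V_p\to V_p$ of the partition of $V_p$ into weak orbit types of the proper $\sfG_p$-action.

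Next I would invoke Proposition \ref{prop:orbit-type} for the proper $\sfG_p$-action on $V_p$: shrinking $V_p$ around the origin, the relation $v\thicksim_o v'$ coincides with $v\thicksim v'$, i.e.\ with the partition by conjugacy classes of isotropy subgroups of $\sfG_p$. By Proposition \ref{prop:strat-group-action}, the connected components of this partition form the orbit-type decomposition of $V_p$, which induces the canonical orbit-type stratification of the quotient $V_p/\sfG_p$. Under the identification $U/\sfG \cong V_p/\sfG_p$ supplied by the local model, this stratification of $V_p/\sfG_p$ is precisely the one described in the proof of Theorem \ref{strat-orbit}, whose set germs constitute $\calS$ near $\orbproj(p)$. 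More concretely, for $q\in V_p$ with isotropy $\sfH\subset\sfG_p$, the stratum through $\orbproj(q)$ in $\calS$ is the germ of $\orbproj(V_p^{(\sfH)})$, which by the identification just made is the germ of the weak orbit type of $q$ in $X$.

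Combining these steps shows simultaneously that $\calT_p=\orbproj^{-1}(\calS_p)$ has the same germ at $p$ as the weak orbit type of $p$ in $\sfG_0$, and that $\calS_p$ has the same germ at $\orbproj(p)$ as the weak orbit type of $\orbproj(p)$ in $X$. The only genuine subtlety I foresee is the one addressed by Proposition \ref{prop:orbit-type}: a priori $\thicksim_o$ is coarser than $\thicksim$ (abstract Lie group isomorphism vs.\ conjugacy in $\sfG_p$), but for the compact isotropy groups appearing here the two agree locally by Lemma \ref{lem:group}. Once this local coincidence is invoked, the rest of the argument is a direct transport of the group-action result through the slice theorem.
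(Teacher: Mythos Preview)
Your proposal is correct and follows essentially the same route as the paper: reduce to the local product model $O\times V_p$ via Corollary~\ref{cor:local}, observe that isotropy is detected entirely in the $V_p$-factor, and then invoke the group-action case (Propositions~\ref{prop:orbit-type} and~\ref{prop:strat-group-action}) to identify the weak-orbit-type germs with those of $\calS$. The paper states this argument in a single sentence preceding the proposition, while you have spelled out the intermediate identifications (isotropy in the product groupoid, the passage from $\thicksim_o$ to $\thicksim$, and the match with the proof of Theorem~\ref{strat-orbit}) more explicitly.
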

Theorem \ref{main-part} $iii)$ follows immediately from this Proposition.
To prove that the partition by the normal orbit type defines a
natural decomposition of $\sfG_0$ and $X$, we need the following two lemmata.
\begin{lemma}\label{lem:component}
For every $p\in \sfG_0$ there exists a neighborhood $U$ of $p$ such that for 
$q\in U$ the relation $q\thicksim_o p$ holds true if and only if $q\thicksim_n p$.
\end{lemma}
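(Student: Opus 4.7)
The implication $q \thicksim_n p \Rightarrow q \thicksim_o p$ is immediate from Definition \ref{dfn:normal-orbit-type}, so the content is in the converse, and the plan is to reduce it to a statement about linear representations of the compact group $\sfG_p$ by passing to the local model.

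The first step is to invoke Corollary \ref{cor:local} to choose a neighborhood $U$ of $p$ of the form $U \cong O \times V_p$, where $O$ is an open ball in $\calO_p$ centered at $p$ and $V_p$ is a $\sfG_p$-invariant open ball in $N_p\calO_p$ centered at the origin, and such that the restricted groupoid $\sfG_{|U}$ is isomorphic to the product of the pair groupoid $O\times O \rightrightarrows O$ with the action groupoid $\sfG_p \ltimes V_p \rightrightarrows V_p$. Since the pair groupoid contributes only trivial isotropy, the isotropy at a point $q \in U$ corresponding to $(o,x) \in O \times V_p$ is canonically identified with the stabilizer $(\sfG_p)_x$ of $x$ under the linear $\sfG_p$-action on $V_p$.

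Now assume $q \thicksim_o p$, so $(\sfG_p)_x \cong \sfG_p$ as abstract Lie groups. Because $\sfG$ is proper, $\sfG_p$ is compact, and $(\sfG_p)_x$ is a closed subgroup; hence Lemma \ref{lem:group} applies and forces $(\sfG_p)_x = \sfG_p$. Thus $x$ is a fixed point of the $\sfG_p$-action on $V_p$, and the orbit in the local model through $q=(o,x)$ is exactly $O \times \{x\}$. Consequently $T_q\calO_q$ corresponds to $T_oO \oplus \{0\}$, so the normal space $N_q\calO_q$ is naturally identified with $T_xV_p$. Because $V_p$ is an open subset of the vector space $N_p\calO_p$ and the $\sfG_p$-action is linear, the canonical isomorphism $T_xV_p \cong N_p\calO_p$ is $\sfG_p$-equivariant, the linearization at the fixed point $x$ coinciding with the original linear action. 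This provides an isomorphism of representations $N_q\calO_q \cong N_p\calO_p$ compatible with the identification $\sfG_q = (\sfG_p)_x = \sfG_p$, which is precisely the condition $q \thicksim_n p$.

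The only nontrivial input is Lemma \ref{lem:group} (rigidity of compact Lie group embeddings), and the rest is a direct unpacking of the local model. I do not foresee a serious obstacle beyond verifying that the linearization identification $T_xV_p \cong N_p\calO_p$ is indeed $\sfG_p$-equivariant, which follows at once from the linearity of the action on $V_p$.
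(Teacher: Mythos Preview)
Your proposal is correct and follows essentially the same route as the paper: both reduce via Corollary \ref{cor:local} to the local product model, where the question becomes a statement about the linear action of the compact group $\sfG_p$ on $V_p$. The paper then simply cites \cite[Thm.~2.6.7(i)]{dk:book} for that case, whereas you spell the argument out explicitly using Lemma \ref{lem:group} and linearity of the action---which is precisely the content of that reference.
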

\begin{proof}
It suffices to prove that $q\thicksim_o p$ implies
$q\thicksim_n p$. By Corollary \ref{cor:local}, this follows from
the analog statement for the case of a compact group action on a
manifold, which is proved in \cite[Thm 2.6.7 (i)]{dk:book}.
\end{proof}
Lemma \ref{lem:component} shows that the normal orbit types are open and closed in the orbit types, and  that the normal orbit type decomposes $\sfG_0$ into submanifolds, cf.~Theorem \ref{main-part} $i)$.
\begin{lemma}\label{lem:normal-orbit-type} If $p,q\in \sfG_0$
such that $p\thicksim_n q$, then there exist open neighborhoods $U_p$ and $U_q$ 
of $p$ and $q$ in $\sfG_0$ such that the restricted groupoids $\sfG_{|U_p}$
and $\sfG_{|U_q}$ are isomorphic.
\end{lemma}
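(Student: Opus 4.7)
The plan is to exploit the explicit local model for $\sfG$ supplied by Corollary~\ref{cor:local}. Applied at $p$ and at $q$, it furnishes open neighborhoods $U_p, U_q$ together with identifications $U_p \cong O_p \times V_p$ and $U_q \cong O_q \times V_q$, where $O_p \subset \calO_p$ and $O_q \subset \calO_q$ are open balls centered at $p$ and $q$, while $V_p \subset N_p\calO_p$ and $V_q \subset N_q\calO_q$ are $\sfG_p$- respectively $\sfG_q$-invariant open balls centered at the origin. Under these identifications, $\sfG_{|U_p}$ is isomorphic, as a Lie groupoid, to the product of the pair groupoid $O_p \times O_p \rightrightarrows O_p$ and the translation groupoid $\sfG_p \ltimes V_p \rightrightarrows V_p$; the analogous description holds for $q$.

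Next I use the normal-orbit-type hypothesis to assemble the two factors. By assumption there exists a Lie group isomorphism $\phi : \sfG_p \to \sfG_q$ together with a $\phi$-equivariant linear isomorphism $\psi : N_p\calO_p \to N_q\calO_q$. After possibly shrinking $V_p$ and $V_q$, one may assume $\psi$ restricts to an equivariant diffeomorphism $V_p \to V_q$; paired with $\phi$ this produces an isomorphism of transformation groupoids $\sfG_p \ltimes V_p \cong \sfG_q \ltimes V_q$. For the pair groupoid factor observe that the equality $\dim N_p\calO_p = \dim N_q\calO_q$ combined with the assumption that $p$ and $q$ lie in connected components of $\sfG_0$ of the same dimension forces $\dim \calO_p = \dim \calO_q$; after shrinking $O_p$ and $O_q$ to diffeomorphic open balls, any diffeomorphism $\alpha : O_p \to O_q$ induces an isomorphism of pair groupoids $O_p \times O_p \cong O_q \times O_q$.

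Taking the product of the two factorwise isomorphisms produces an isomorphism between the product models of $\sfG_{|U_p}$ and $\sfG_{|U_q}$, which one transports back through the identifications of Corollary~\ref{cor:local} to obtain the desired $\sfG_{|U_p} \cong \sfG_{|U_q}$ after the indicated shrinking. The main conceptual point is the observation that the germ of a proper Lie groupoid at a point is completely determined by the pair (orbit dimension, isotropy representation on the normal space), which is exactly the data recorded by the normal orbit type; given the splitting from Corollary~\ref{cor:local}, there is no genuine obstacle beyond the factorwise bookkeeping.
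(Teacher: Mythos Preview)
Your proof is correct and follows exactly the approach the paper intends: the paper's own proof is the single sentence ``This follows from Corollary~\ref{cor:local} which describes $\sfG$ locally,'' and you have simply unpacked this by matching the two local product models factorwise via the equivariant isomorphism of normal representations and a diffeomorphism of the orbit-ball factors. (Your passing ``assumption'' that $p$ and $q$ lie in components of $\sfG_0$ of the same dimension is not an added hypothesis but automatic, since $\sfG_0$ is a manifold of pure dimension.)
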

\begin{proof}
This follows from Corollary \ref{cor:local} which describes $\sfG$ locally.
\end{proof}
It remains to prove Theorem \ref{main-part} $ii)$:
\begin{proposition}\label{thm:stratification}
 The connected components of the partitions by orbit type $\thicksim_o$ (and therefore also of normal orbit types $\thicksim_n$) define decompositions of 
 $\sfG_0$ and $X$. 
\end{proposition}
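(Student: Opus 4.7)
The plan is to verify the defining properties of Definition \ref{dfn:decomposition} for the partition of $\sfG_0$ by connected components of $\thicksim_o$-classes, together with local finiteness and local closedness of the pieces. The strategy is to reduce every property to the corresponding statement for a compact group action via the local model of Corollary \ref{cor:local}. Around each $p\in\sfG_0$ there is a neighbourhood $U\cong O\times V_p$ over which $\sfG$ restricts to the product of a pair groupoid with $\sfG_p\ltimes V_p$. By Lemma \ref{lem:component} the relations $\thicksim_o$ and $\thicksim_n$ agree on $U$, and by Proposition \ref{prop:orbit-type} applied to the $\sfG_p$-action on $V_p$ they agree locally with the conjugacy relation for isotropy groups in $\sfG_p$. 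Hence the trace on $U$ of the global partition is obtained by taking the product of $O$ with the orbit type partition of $V_p$ under the compact Lie group $\sfG_p$.

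From this local description, the smoothness of each piece and local finiteness of the partition are immediate consequences of Proposition \ref{prop:strat-group-action} (and the underlying \cite[Thm 2.7.4]{dk:book}): the orbit type partition of $V_p$ is a locally finite decomposition into locally closed submanifolds, products with $O$ preserve these properties, and passing to connected components preserves them further. Hence every connected component of a $\thicksim_o$-class is a locally closed smooth submanifold of $\sfG_0$, and the partition is locally finite.

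The main obstacle is the frontier condition. Let $R, S$ be distinct connected components of orbit type strata with $R\cap\overline{S}\neq\emptyset$. I would show that $R\cap\overline{S}$ is both open and closed in $R$; connectedness of $R$ then forces $R\subset\overline{S}$. Closedness is automatic. For openness, fix $r\in R\cap\overline{S}$ and a neighbourhood $U\cong O\times V_p$ as above. Local finiteness in $U$ ensures that $S\cap U$ is a finite union of local components of its orbit type stratum, and since $r\in\overline{S\cap U}$ at least one such local component $S_{\mathrm{loc}}\subset S\cap U$ satisfies $r\in\overline{S_{\mathrm{loc}}}$. The local component $R_{\mathrm{loc}}\subset R\cap U$ containing $r$ has the form $O\times R_V$ for a local component $R_V$ of an orbit type in $V_p$, and similarly $S_{\mathrm{loc}}=O\times S_V$. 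The frontier condition for the orbit type decomposition of $V_p$, which is known from the compact group action case, yields $R_V\subset\overline{S_V}$, and consequently $R_{\mathrm{loc}}\subset\overline{S_{\mathrm{loc}}}\subset\overline{S}$. Thus $R_{\mathrm{loc}}$ is a neighbourhood of $r$ in $R$ entirely contained in $\overline{S}$, giving openness.

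Finally, since $\thicksim_o$ is $\sfG$-invariant, the partition of $\sfG_0$ descends to one of $X$, and the quotient projection of a neighbourhood $U$ as above is modeled by $V_p/\sfG_p$. All three conditions transfer verbatim: the pieces of the quotient partition are smooth manifolds by Proposition \ref{prop:strat-group-action}, local finiteness descends from $\sfG_0$, and the frontier condition follows by exactly the same argument as above, now inside $V_p/\sfG_p$. Since normal orbit types are open and closed inside orbit types by Lemma \ref{lem:component}, the corresponding statement for the partition by $\thicksim_n$ follows at once by refining each connected component of a $\thicksim_o$-class into its constituent $\thicksim_n$-components.
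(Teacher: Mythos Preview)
Your argument is correct and complete, but it follows a different route from the paper's own proof. The paper dispatches the frontier condition in two lines using Lemma~\ref{lem:normal-orbit-type}: given $p\in R\cap\overline{S}$ and any other $q\in R$, the two points have the same normal orbit type (since a connected component of a $\thicksim_o$-class lies in a single $\thicksim_n$-class by Lemma~\ref{lem:component}), so there exist neighbourhoods $U_p,U_q$ with $\sfG_{|U_p}\cong\sfG_{|U_q}$; transporting a sequence in $S$ approaching $p$ through this isomorphism shows $q\in\overline{S}$ as well. This is a homogeneity argument: points of $R$ are locally indistinguishable, so the property of lying in $\overline{S}$ propagates.

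You instead reduce everything to the known orbit-type decomposition of a linear $\sfG_p$-space via the local product model of Corollary~\ref{cor:local}, and run an open--closed argument in $R$. Your approach is more explicit and self-contained: it does not rely on Lemma~\ref{lem:normal-orbit-type}, it makes local finiteness visible rather than implicit, and it treats the quotient $X$ by the same mechanism. The paper's approach is shorter and more conceptual, exploiting directly that normal orbit type controls the local groupoid. Both are valid; yours has the advantage that it sidesteps the (slightly delicate) question of why the groupoid isomorphism $U_p\cong U_q$ lands the transported sequence in the \emph{same} connected component $S$ rather than merely in the same $\thicksim_n$-class.
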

\begin{proof}
Lemma \ref{lem:component} proves that the connected components of the partition
by normal orbit type are submanifolds of $\sfG_0$. The only not immediate
property of  a decomposition which remains to be shown is the
condition of frontier.
To prove the condition of frontier  consider two  connected components 
$R$ and $S$ of the partition  by normal orbit type.
We have to show that if $R\cap \overline{S}\ne \emptyset$,
then $R\subset \overline{S}$. Let $p\in R \cap \overline{S}$. 
By Lemma \ref{lem:normal-orbit-type}, there exist for $q\in R$ open
neighborhoods $U_p$ and $U_q$ of $p$ and $q$ respectively such that 
$\sfG_{|U_p}$ is isomorphic to $\sfG_{U_{q}}$. This implies that $q$ is also in 
$\overline{S}$. Hence $R\subset \overline{S}$.
%
\end{proof}
\section{Metric properties of proper Lie groupoids}
\label{sec:metric}
In this section we discuss the metric properties of the quotient space.
Let $\sfG$ be a proper Lie groupoid equipped with a transversally invariant riemannian metric.
There is a standard way, cf.~\cite[Sec.~$1.16^+$]{GroMSRNRS} and \cite[Def.~3.1.12]{BurBurIvaCMG},
to define a semi-metric on the quotient by
\begin{equation}
\label{semi-metric-G}
\bar{d}(\calO,\calO'):= \inf \big\{ d(q_1 , \calO) + \ldots +
    d (q_k , \calO_{k-1} ) \big\},
\end{equation}
where the infimum is taken over all choices of pairs $(q_i,\calO_i),~q_i\in\calO_i,~ i=1,\ldots,k-1$ and   $q_k\in\calO'$, over all $k\in\N$. Our main result is as follows:
\begin{theorem}
\label{Thm:LocMetSrucOrbitspace}
 Let $\sfG$ be a proper Lie groupoid such that the orbit space $X$ is
 connected. Let $\eta$ be a transversally invariant riemannian metric on $\sfG$. Then the induced
semi-metric \eqref{semi-metric-G} on the  orbit space $X$ is in fact a metric and the following
 properties hold true:
 \begin{enumerate}[{\rm (1)}]
   \item The metric $\overline{d}$ is uniquely determined by the property
         that for each orbit $\calO$ in $\sfG_0$ and every point $q \in \Tu_\calO$
         of an appropriate metric tubular neighborhood $\Tu_\calO$ of $\calO$
         the relation
         \begin{equation*}
            \overline{d} (\calO, \calO_q ) = d (q ,\calO)
         \end{equation*}
         holds true, where $\calO_q$ is the orbit through $q$.
   \item The canonical projection $\orbproj : \sfG_0 \rightarrow X$
         onto the orbit space is a submetry, i.e.~every
         ball $B_r (p)$ in $(\sfG_0,\eta)$ with respect to the geodesic
         distance on the riemannian manifold $(\sfG_0,\eta)$ is mapped under
         $\orbproj$ onto the ball $B_r (\calO_p)$ in $X$.
   \item $(X,\overline{d})$ is a length space, i.e.~the geodesic distance
         with respect to $\overline{d}$ coincides with $\overline{d}$.
         Moreover, the topology induced on $X$ by $\overline{d}$ coincides with
         quotient topology with respect to $\orbproj$.
   \item
         In case $(\sfG_0,\eta)$ is a complete riemannian manifold,
         $(X,\overline{d})$ is even a complete locally compact length space,
         and every bounded closed ball is compact.
   \item In case $(\sfG_0,\eta)$ has curvature bounded from below,
         $(X,\overline{d})$ is an Alexandrov space globally of dimension
         $\leq \dim \sfG_0$.
\end{enumerate}

\end{theorem}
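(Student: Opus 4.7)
The proof proceeds by first establishing the local formula (1) and then deducing the remaining statements from general submetry theory. First I would fix an orbit $\calO$ together with a metric tubular neighborhood $\Tu^\varepsilon_\calO$ supplied by Theorem \ref{Thm:SliceThm2}; under the isomorphism $\sfG_{|\Tu^\varepsilon_\calO} \cong (\sfG_{|\calO} \ltimes N\calO)_{|\Tu^\varepsilon_{\calO,N\calO}}$, every $q \in \Tu^\varepsilon_\calO$ corresponds to a unique normal vector $n_q$ with $q = \exp(n_q)$, and the metric exponential map gives $d(q,\calO) = \|n_q\|$. Transversal invariance of $\eta$ implies that the action of $\sfG_{|\calO}$ on $N\calO$ preserves fiber norms, so the function $\varrho(q) := d(q,\calO)$ is constant along orbits in $\Tu^\varepsilon_\calO$ and descends to a continuous function on $\orbproj(\Tu^\varepsilon_\calO)$. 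The inequality $\bar{d}(\calO,\calO_q) \leq d(q,\calO)$ is then immediate. For the reverse inequality, I would show that any chain $q_1 \in \calO_1, \ldots, q_k \in \calO_q$ realizing the infimum in \eqref{semi-metric-G} satisfies $\sum d(q_i,\calO_{i-1}) \geq d(q,\calO)$: when the chain stays inside $\Tu^\varepsilon_\calO$ this follows from the triangle inequality for $\varrho$, combined with transversal invariance which ensures that the relevant distances only depend on the orbit-level data; if the chain leaves $\Tu^\varepsilon_\calO$, the cost of exiting and reentering already exceeds $d(q,\calO)$ by the radial nature of $\varrho$. In particular this separates distinct orbits, showing that $\bar d$ is a genuine metric.

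For (2), the inclusion $\orbproj(B_r(p)) \subset B_r(\calO_p)$ is immediate since $\bar d(\calO_p, \calO_q) \leq d(p,q)$. For the reverse, given $\calO'$ with $\bar d(\calO_p, \calO') < r$, I would construct a point $q \in \calO'$ with $d(p,q) < r$ by a path-lifting argument: select a nearly-optimal chain, refine it until each successive pair of orbits lies in a common metric tubular neighborhood of some intermediate orbit, and use (1) in each such neighborhood to replace the individual jumps by actual geodesic segments in $\sfG_0$ of the same length. The resulting piecewise-geodesic path emanating from $p$ has total length approximating $\bar d(\calO_p, \calO')$ and terminates on $\calO'$, which proves the submetry property.

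For (3), the submetry property of $\orbproj$ from (2) directly implies that $(X,\bar d)$ is a length space, as the metric quotient of a length space under a submetry is again a length space. Moreover, submetries are open continuous surjections, so the metric topology on $X$ coincides with the quotient topology. For (4), if $\eta$ is complete then $(\sfG_0,d)$ satisfies Hopf–Rinow; completeness and local compactness of $X$ transfer along the submetry, and compactness of closed bounded balls in $X$ follows since each such ball is the image under $\orbproj$ of the corresponding closed ball in $\sfG_0$ (which is compact by Hopf–Rinow) intersected with its preimage. Finally, for (5) I would invoke the theorem of Burago--Gromov--Perelman that curvature lower bounds pass to submetric quotients of Alexandrov spaces, giving that $(X,\bar d)$ is itself Alexandrov with the same lower curvature bound; the dimension bound $\dim X \leq \dim \sfG_0$ follows from the fact that the Hausdorff dimension does not increase under Lipschitz surjections.

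The main obstacle I anticipate is the lower bound in (1), together with the path-lifting step in (2). Both require using transversal invariance in a uniform way along chains of orbits, and the delicate point is that Theorem \ref{Thm:SliceThm2} only supplies a metric tubular neighborhood of a single orbit controlled by a function $\varepsilon$ which in general is not $\sfG$-invariant, so one cannot directly patch neighborhoods of different orbits into a global invariant structure. The refinement procedure in the proof of (2) is precisely what circumvents this non-invariance, by working at scales smaller than the local $\varepsilon$ and invoking the local formula (1) orbit by orbit.
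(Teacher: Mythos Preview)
Your overall strategy---prove the local formula (1), deduce the submetry property (2), and then invoke general submetry theory for (3)--(5)---is exactly the route the paper takes, and your identification of the non-invariance of $\varepsilon$ as the main obstacle is spot on.

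The gap is in how you resolve that obstacle. Your argument for the lower bound in (1) breaks into ``chain stays in $\Tu^\varepsilon_\calO$'' versus ``chain leaves $\Tu^\varepsilon_\calO$'', and you claim the second case is handled because ``the cost of exiting and reentering already exceeds $d(q,\calO)$ by the radial nature of $\varrho$''. This does not work as stated: since $\varepsilon:\calO\to\R_{>0}$ is merely continuous and not constant (nor $\sfG$-invariant), a chain can exit $\Tu^\varepsilon_\calO$ near a point of $\calO$ where $\varepsilon$ is much smaller than $d(q,\calO)$, at a cost that does not dominate $d(q,\calO)$. The refinement procedure you propose for (2) does not by itself repair this, because the issue already arises at the level of a single orbit $\calO$ before any patching.

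The paper's device to fix this is to pass to an auxiliary open subgroupoid $\sfH\hookrightarrow\sfG$, weakly equivalent to $\sfG$, whose orbits are all of finite type (Proposition \ref{equivtofinite}). On $\sfH$ one can arrange, via Theorem \ref{Thm:SliceThm2}, that the tubular radius $\varepsilon$ is \emph{constant} over each relevant orbit-piece, which is exactly what makes the ``chain leaves the tube'' case trivial and the inductive argument for the ``chain stays'' case clean. One then proves (1) and the submetry property for $\orbproj:\sfH_0\to X$, uses the length-space characterisation to see that the resulting metric is independent of the choice of $\sfH$, and finally checks that the $\sfH$-quotient metric coincides with the $\sfG$-quotient semi-metric \eqref{semi-metric-G}. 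If you insert this reduction to $\sfH$ at the start, the rest of your outline goes through essentially as written.
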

\begin{remark}
This theorem generalizes the following special case: let $f:M\to N$ be a  submersion.
Associated is a proper Lie groupoid $\sfG$ with $\sfG_0:=M$, $\sfG_1:=M\times_NM$, 
source and target are induced by the two projections to $M$, and composition is given 
by $(x,y)\cdot (y,z)=(x,z)$.
A transversally invariant riemannian metric on $\sfG$ is a metric on $M$ which descends to a riemannian metric on $N$, and makes $f$ into a riemannian submersion. Since $N$ is the quotient space of this particular groupoid and $f$ the projection map, this fact suffices to prove Thm \ref{Thm:LocMetSrucOrbitspace} in this case.

For a general proper Lie groupoid, with singular quotient space, a
submetry is exactly the right generalization of a riemannian
submersion.
  For more information on the metric concepts used in the theorem
  see the monograph \cite{BurBurIvaCMG}. Length spaces are covered in Chapter 2
  of that book, Alexandrov spaces in Chapter 10.
  Alexandrov spaces are essentially length spaces with curvature bounded
  below. They generalize riemannian manifolds with curvature bounded below
  but share  many properties with riemannian manifolds.
\end{remark}
\subsection{Proper Lie groupoids and singular riemannian foliations}
Before proving the above theorem let us introduce some
notation, and prove several auxiliary results. First, assume to be given a
smooth manifold $M$ with a riemannian metric $\eta$, and a submanifold $S\subset
M$, not necessarily closed. Let $N \subset T_{|S}M$ denote the normal bundle to
$S$ in $M$, and $\pi^N : N \rightarrow S$ its projection. Then there exists
a continuous function $\varepsilon_S : S \rightarrow \R_{>0}$ such that for
each continuous $r : S \rightarrow \R_{>0}$ with $0 < r \leq \varepsilon_S $
the restriction of the exponential map to
\begin{equation}
 U_{S,r} := \big\{ v \in N  \mid \| v \| < r \big( \pi^N (v) \big) \big\}
\end{equation}
is a diffeomorphism onto its image which will be denoted by $T^r_S$. Choosing
such an $\varepsilon_S$, each of the sets  $T^r_S$ with $0< r \leq  \varepsilon_S$
then is a (metric) tubular neighborhood of $S$ in $M$, and
\begin{equation}
   T^r_S =  \exp \big( U_{S,r} \big) = \big\{ p \in M \mid d (p, S) < r
   \big( \pi_S (p) \big) \},
\end{equation}
where $d(p,S)$ is the geodesic distance from $p$ to $S$, and
$\pi_S : T^r_S \rightarrow S$ is the projection of the tubular neighborhood
$T^r_S$. In a situation, where $\exp_{|U_{S,r}}$ is a diffeomorphism onto its
image for all $0 < r \leq \varepsilon_S $, we say that $\varepsilon_S $ defines
a tubular neighborhood of $S$ in $M$. In case $S$ is a compact submanifold,
$\varepsilon_S$ can be chosen to be constant, i.e.~one can choose
$\varepsilon_S >0$ to be a positive real number such that the constant function
on $S$ with value $\varepsilon_S$  defines a tubular neighborhood of $S$ in $M$.
After these notational preparations, let us proceed now to prove the auxiliary
results.
\begin{lemma}
\label{Lem:orth}
  Let $(M,\eta)$ be a connected riemannian manifold, and $S\subset M$ be a
  closed submanifold. Let $p\in M$ be a point, and denote by
  $d(p,S)$ the geodesic distance between $p$ and $S$ which means the
  infimum of all geodesic distances $d(p,q)$, where $q$ runs through the
  elements of $S$. Assume that there is a point $q\in S$ such that
  $d(p,q) =d(p,S)$, and that $d(p,q)$ is less than the convexity radius of $M$
  at $p$ and at $q$. Then the tangent vector $X := \exp_q^{-1} (p) \in T_qM$
  is orthogonal to $T_qS$.
\end{lemma}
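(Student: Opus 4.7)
The plan is to reduce the statement to the classical first variation formula for arc length. Given any tangent vector $Y \in T_qS$, I would choose a smooth curve $\gamma : (-\varepsilon, \varepsilon) \to S$ with $\gamma(0)=q$ and $\gamma'(0)=Y$, and consider the real-valued function $L(t) := d(p,\gamma(t))$. Since $q\in S$ realizes the distance $d(p,S)$, the function $L$ attains a local minimum at $t=0$, so $L'(0)=0$ whenever this derivative exists.

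The hypothesis that $d(p,q)$ is less than the convexity radius at $p$ and at $q$ is exactly what I need to guarantee a smooth one-parameter family of minimizing geodesics connecting the endpoints. More precisely, for $t$ sufficiently small, the point $\gamma(t)$ still lies in a totally normal ball around $p$, so there exists a unique minimizing geodesic $c_t : [0,1] \to M$ with $c_t(0) = \gamma(t)$ and $c_t(1) = p$, depending smoothly on $t$. In particular $c_0$ is the minimizing geodesic from $q$ to $p$, so $c_0'(0)$ is parallel to $X = \exp_q^{-1}(p) \in T_qM$, with $|c_0'(0)| = |X| = d(p,q)$.

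The first variation formula for the length functional, applied to the smooth variation $(t,s) \mapsto c_t(s)$ with endpoint variation fields $Y$ at $s=0$ and $0$ at $s=1$, then yields
\begin{equation*}
  L'(0) \, = \, -\left\langle Y , \frac{c_0'(0)}{|c_0'(0)|} \right\rangle
  \, = \, -\frac{1}{|X|} \, \langle Y , X \rangle .
\end{equation*}
Combining this with $L'(0)=0$ gives $\langle Y, X\rangle = 0$. Since $Y \in T_qS$ was arbitrary, this proves $X \perp T_qS$.

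I do not expect any real obstacle: the only delicate point is to ensure that the minimizing geodesic $c_t$ can be chosen smoothly in $t$, which is handled by the convexity radius hypothesis, and that $L$ is differentiable at $0$, which also follows from this since within the convexity radius $L(t) = |\exp_{\gamma(t)}^{-1}(p)|$ is smooth in $t$.
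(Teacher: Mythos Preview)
Your proof is correct and follows essentially the same approach as the paper: both pick a curve $\gamma$ in $S$ through $q$ with $\gamma'(0)=Y$, observe that $t\mapsto d(p,\gamma(t))$ has a minimum at $t=0$, and differentiate. The only cosmetic difference is that you invoke the first variation formula for arc length, whereas the paper differentiates $|\exp_p^{-1}(\gamma(t))|^2$ directly and then applies the Gauss Lemma to identify the resulting inner product in $T_pM$ with $\eta_q(X,Y)$; these are two standard packagings of the same computation.
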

\begin{proof}
  Let $Y\in T_qS$ be a tangent vector, and
  $\gamma : (-\delta, \delta) \rightarrow S$ a smooth path in $S$
  with $\gamma (0)=q$ and $\dot{\gamma}=Y$ .
  Then, after possibly passing to some smaller $\delta$, the tangent vectors
  $\widetilde{X}(t) := \exp_p^{-1} \big( \gamma (t) \big)$ are well-defined for
  $|t|<\delta$. Since $\eta \big( \widetilde{X}(t),\widetilde{X}(t) \big)$
  has a local minimum  at $t=0$, one obtains
  \[
     0 = \eta \Big( \widetilde{X}(0), \big( T_{\widetilde{X}(0)}
     \exp_p\big)^{-1} Y \Big).
  \]
  By the Gauss-Lemma, $Y$ is orthogonal to
  $T_{\widetilde{X}(0)} \exp_p \big( \widetilde{X}(0) \big)$, and
  $T_{\widetilde{X}(0)} \exp_p \big( \widetilde{X}(0) \big) = - X$.
  The claim follows.
\end{proof}
Let us now recall, following \cite[Chap.~6]{MolRF}, the notion of a 
{\em singular riemannian foliation} on a manifold $M$: 
this is a partition of $M$ by connected immersed submanifolds (the leaves) satisfying
\begin{itemize}
\item[$i$)] (singular foliation) the module of vector fields tangent to the leaves acts transitively on each leaf, 
\item[$ii)$] (riemannian structure) there exists a riemannian metric on $M$ with the property that every geodesic 
that is perpendicular to a leave at one point, remains perpendicular to every leaf it meets.
\end{itemize}
A metric as in $ii)$ above, is called {\em adapted} to the singular foliation. 
A typical example of a singular riemannian foliation is given by the orbit foliation of an isometric 
action of a Lie group on a riemannian manifold.

The following result is stated in \cite[Corollary
7.4.13]{DufZunPSNF} without a proof.
\begin{proposition}
\label{prop-lie-SRF}
  Let $\sfG$ be a proper Lie groupoid, and $\eta$ a transversally invariant 
  riemannian  metric on $\sfG_0$. Then the connected components of the orbits 
  of $\sfG$ define a singular riemannian foliation on $\sfG_0$ for which 
  $\eta$ is an adapted metric.
  \end{proposition}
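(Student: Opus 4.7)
The plan is to verify the two conditions defining a singular riemannian foliation in the sense of Molino: (i) the partition of $\sfG_0$ into connected components of $\sfG$-orbits is a singular foliation whose tangent module acts transitively on each leaf, and (ii) $\eta$ is adapted, i.e., every geodesic which starts perpendicular to a leaf remains perpendicular to every leaf it meets.

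For axiom (i), I would invoke the Lie algebroid $A$ of $\sfG$, with anchor $\rho = dt_{|A} : A \to T\sfG_0$ whose image at $p$ is $T_p \calO_p$. Each section $\alpha\in\Gamma(A)$ gives a smooth vector field $\rho(\alpha)$ on $\sfG_0$ tangent to the orbits, and these vector fields span $T_p\calO_p$ at every point. Since the flow of any such $\rho(\alpha)$ preserves each orbit, compositions of such flows can be used to connect any two points lying in the same connected component of an orbit; this is the standard identification of leaves of the singular foliation integrating $\rho(A)$ with connected components of $\sfG$-orbits.

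For the more substantive axiom (ii), my approach is to reduce to the equivalent characterization that, for every leaf $L$, the function $q \mapsto d(q,L)$ is locally constant on the leaf through $q$ whenever $q$ is sufficiently close to $L$. (This equivalence is a standard consequence of the Gauss lemma applied to radial geodesics emanating perpendicularly from $L$.) The metric slice theorem \ref{Thm:SliceThm2} provides exactly what is needed: for each orbit $\calO$ one has a metric tube $\Tu^\varepsilon_\calO$ together with an isomorphism of groupoids
\[
  \Theta : \sfG_{|\Tu^\varepsilon_\calO} \longrightarrow
  \big(\sfG_{|\calO} \ltimes N\calO\big)_{|\Tu^\varepsilon_{\calO,N\calO}}
\]
covering the inverse of the metric exponential. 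For $q \in \Tu^\varepsilon_\calO$ the corresponding normal vector $v_q = \exp_{|\Tu^\varepsilon_{\calO,N\calO}}^{-1}(q) \in N_{\pi_\calO(q)}\calO$ satisfies $\|v_q\| = d(q,\calO)$ by construction of the metric tube. If $q_1,q_2\in\Tu^\varepsilon_\calO$ lie in the same $\sfG$-orbit, then $v_{q_1}$ and $v_{q_2}$ lie in the same orbit of $\sfG_{|\calO}\ltimes N\calO$, so $v_{q_2}=g\cdot v_{q_1}$ for some $g\in\sfG_{|\calO}$. Transverse invariance of $\eta$ makes this canonical $\sfG_{|\calO}$-action on fibres of $N\calO$ act by linear isometries, so $\|v_{q_1}\|=\|v_{q_2}\|$ and hence $d(q_1,\calO)=d(q_2,\calO)$.

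The main obstacle is the bookkeeping between orbits and leaves (connected components of orbits): one must verify that, after possibly shrinking the tube around a fixed component $L\subset\calO$, the foot $\pi_\calO(q)$ of the perpendicular lies in $L$ whenever $q$ is near $L$, so that $d(q,L)=d(q,\calO)$, and that the points of the leaf through $q$ close to $L$ likewise project into $L$. Both points follow from continuity of $\pi_\calO$ after an appropriate choice of $\varepsilon$. With this in hand, the distance-constancy criterion yields adaptedness of $\eta$ at every leaf, which together with (i) completes the proof.
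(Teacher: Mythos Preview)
Your argument is correct but follows a genuinely different path from the paper's proof. You deduce adaptedness from the equidistance property: using the metric slice theorem (Thm.~\ref{Thm:SliceThm2}) you show that $d(\cdot,\calO)$ is constant along orbits inside the metric tube, and then invoke the Gauss-lemma/gradient argument (the radial geodesic through $q$ is the gradient flow line of $d(\cdot,\calO)$, which is perpendicular to the level sets and hence to the leaves contained in them) to obtain perpendicularity along the whole geodesic via an open--closed continuation. In effect you are proving Prop.~\ref{Prop:EquiDist} first and then reading off Prop.~\ref{prop-lie-SRF} from it. The paper instead avoids Thm.~\ref{Thm:SliceThm2} altogether here: it appeals to Molino's reduction \cite[Prop.~6.4]{MolRF}, which says it suffices to exhibit \emph{some} locally adapted metric inducing the same transverse metric as $\eta$; such a metric is then built by hand as a product metric on the local model $O\times N_pO$ from Cor.~\ref{cor:local}, and adaptedness of that product metric is checked directly. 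Your route is more self-contained (no appeal to Molino's reduction) but leans on the heavier Thm.~\ref{Thm:SliceThm2}; the paper's route uses only the elementary local model and outsources the passage from ``locally adapted'' to ``$\eta$ adapted'' to Molino. Both orderings are logically sound since Thm.~\ref{Thm:SliceThm2} does not rely on Prop.~\ref{prop-lie-SRF}.
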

\begin{proof}
  Consider the map $D$ which associates to each point $p\in \sfG_0$ the tangent
  space $T_p\calO_p$, where $\calO_p$ denotes the orbit through $p$.
  This is a singular vector-distribution in the sense of
  Stefan--Sussmann \cite{SteASOFS,SusOFVFID,DufZunPSNF} which can be identified
  with the image under the anchor map $\rho$ of the Lie algebroid $A$ of $\sfG$.
  For any Lie algebroid $(A,\rho)$ it is known, cf.\ \cite[\S 8.1.4]{DufZunPSNF} 
  that $D=\rho(A)$ defines a singular foliation: first of all, by the local splitting 
  theorem \cite[Thm 1.1]{fernandes}, the distribution is smooth.
  Second, the maximal integral submanifolds $L$ of $D$ can be
  described in terms of ``$A$-paths'' (cf.\ \cite{fernandes}) as
  follows:  we say that $x,~y\in\sfG_0$ are $A$-equivalent, written
  $x\sim_Ay$, if there exists an $A$-path connecting the two points.
  This defines an equivalence relation whose orbits are immersed
  submanifolds of $\sfG_0$, in fact embedded in case $\sfG$ is proper.
  They are the  maximal integrating submanifolds of $D$, so the
  Stefan--Sussmann Theorem \cite[Thm.~1.5.1]{DufZunPSNF} implies that
  $D$ is the tangent distribution of a singular foliation,

  It remains to be shown that the singular foliation $\sfG_0$ together with
  $\eta$ is riemannian. Since $\eta$ is transversally invariant and the claim is local,
  it suffices by \cite[Prop.~6.4]{MolRF} to show that for each point
  $p\in \sfG_0$ there exists a neighborhood $W$ and a riemannian metric
  $\varrho$ on $W$ such that the pair $(W,\varrho)$ is a singular riemannian
  foliation. To this end choose a neighborhood $O\subset \calO_p$
  diffeomorphic to a ball in $T_p\calO_p$ under the exponential map.
  By the slice theorem for proper Lie groupoids, $\sfG$ looks in a neighborhood
  of $p$ like the transformation groupoid
  $\sfG_{|O} \ltimes \tilde W$, where $\tilde W$ is a $\sfG_{|O}$-invariant
  open neighborhood of the zero-section of the normal bundle $NO$.
  So it suffices to prove that on $\big( \sfG_{|O} \ltimes NO \big)_0$ there is
  an adapted riemannian metric $\varrho$ which means
  that each geodesic which emanates orthogonally to an orbit stays orthogonally
  to every orbit it passes through. By the choice of $O$ one has a
  trivialization $NO \cong O \times N_pO$ (Corollary \ref{cor:local}). Now choose a 
  $\sfG_p$-invariant
  scalar product $\varrho^\textup{v}$ on $N_pO$, and let $\varrho^\textup{h} $
  be the pull-back to $O$ of a euclidean metric on $T_p\calO_p$ under the exponential
  map $\exp$. The product metric
  $\varrho$ of  $\varrho^\textup{v}$ and $\varrho^\textup{h} $ then has the
  desired property. This can be seen as follows. Let $\calQ\subset NO $
  be an orbit of the $\sfG_{|O}$-action on $NO$, and assume that
  $\gamma :[0,1] \rightarrow NO$ is a geodesic which emanates orthogonally
  from $\calQ$. This means that $q :=\gamma (0) \in \calQ$ and
  $\dot{\gamma}(0) \perp T_q \calQ$. Under the trivialization $NO \cong O \times N_pO$
  one has $q =(q_1,q_2)$ with $q_1 \in O$ and $q_2 \in N_pO$. Moreover,
  $\calQ$ then is given under this trivialization by
  $O\times \sfG_p \cdot q_2$, where $\sfG_p$ is the isotropy group of $p$.
  Since $\varrho$ is the product metric of  $\varrho^\textup{v}$ on
  $N_pO$ and $\varrho^\textup{h}$ on $O$,  one concludes that
  \begin{equation}
  \label{Eq:OrthProds}
    O \cong T_q(O\times \{q_2\}) \perp \{ q_1 \} \times N_pO,
  \end{equation}
  hence $\dot{\gamma}(0) \in \{ q_1 \} \times N_pO$. But this implies that
  $\gamma (t)\in \{ q_1 \} \times N_pO$ and $\dot{\gamma} (t) \perp O$
  for all $t\in[0,1]$. By \cite[p.~188]{MolRF}, one obtains that
  $\dot{\gamma} (t)$ is orthogonal in $\{ q_1 \} \times N_pO$ to
  the orbit $\sfG_p\cdot \gamma (t)$. By Eq.~\eqref{Eq:OrthProds} it follows
  that $\dot{\gamma} (t)$ is orthogonal to $T_ {\gamma(t)}\calQ$ for all
  $t\in[0,1]$. This proves that $\varrho$ is an adapted riemannian metric for 
  the singular foliation.
  
Finally, remark that the transversally invariant metric $\eta$ on $\sfG$ induces a 
transverse metric on the normal bundle to each leaf of the foliation as defined in 
\cite[Sec.~6.3]{MolRF}. Therefore, by \cite[Prop.~6.4]{MolRF}, the metric $\eta$ is 
adapted to the foliation.
\end{proof}

\begin{proposition}
  Under the assumptions from above, each connected component of a stratum of
  the stratification of $\sfG_0$ by orbit type is totally geodesic.
\end{proposition}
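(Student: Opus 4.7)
The plan is to reduce the claim to a purely local statement using the two slice-type theorems of the paper, and then to exploit the isometric linear action of the isotropy on the normal fibers guaranteed by transversal invariance. Being totally geodesic is a local condition, so for each $p\in S$ it suffices to exhibit a neighborhood $U$ of $p$ in $\sfG_0$ in which $S\cap U$ is totally geodesic. First, I would apply Theorem \ref{Thm:SliceThm2} to identify a tubular neighborhood $\Tu_\calO$ of the orbit $\calO=\calO_p$ with a neighborhood of the zero section in $N\calO$ via the metric exponential, so that $\sfG_{|\Tu_\calO}$ becomes the linearized groupoid $(\sfG_{|\calO}\ltimes N\calO)_{|\Tu^\varepsilon_{\calO,N\calO}}$. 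Shrinking further, Corollary \ref{cor:local} trivializes a neighborhood of $p$ as a product $O\times V_p$ on which the groupoid is a product of a pair groupoid on $O$ with the transformation groupoid $\sfG_p\ltimes V_p$. The local description of the stratification from the proof of Theorem \ref{strat-orbit} then identifies $S\cap U$ with the saturation of the $\sfG_p$-fixed set, i.e.\ with $O\times V_p^{\sfG_p}$.

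Next I would use transversal invariance of $\eta$: the induced scalar product on the fiber $N_p\calO$ is $\sfG_p$-invariant, so $\sfG_p$ acts on $V_p$ by linear isometries and $V_p^{\sfG_p}$ is a linear subspace. Hence the slice-level piece $Z_p^{\sfG_p}:=\exp_p(V_p^{\sfG_p})$ is totally geodesic inside the slice $Z_p$. To lift this statement from the slice to $\sfG_0$, I would invoke the singular riemannian foliation structure of Proposition \ref{prop-lie-SRF}: since $\eta$ is an adapted metric, any geodesic of $(\sfG_0,\eta)$ starting perpendicular to an orbit remains perpendicular to every orbit it crosses and therefore stays inside the slice through its starting point. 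Combining this with the local $\sfG_q$-equivariance of the picture near any $q\in S$, a geodesic issuing from $q$ with initial velocity in $(N_q\calO_q)^{\sfG_q}\subset T_qS$ stays inside the corresponding slice-level fixed set, and hence inside $S$.

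The principal obstacle, and the step I would spend most effort on, is handling tangent directions to $S$ that have a nontrivial component along the orbit $\calO_q$. Orbits of a proper Lie groupoid are in general not totally geodesic in $\sfG_0$, so such a geodesic will typically leave $\calO_q$ at once; what must really be verified is that it remains inside the $\sfG$-saturation of $Z_p^{\sfG_p}$, not inside $\calO_q$ itself. To do this I would transport the geodesic equation through the isomorphism $\Theta$ of Theorem \ref{Thm:SliceThm2} into the linearized model $\sfG_{|\calO}\ltimes N\calO$ and use transversal invariance to argue that the transported geodesic splits into a horizontal part along $\calO$ and a vertical part governed by a $\sfG_{|\calO}$-equivariant vector field which preserves the fixed-point subbundle $(N\calO)^{\sfG_{|\calO}}$. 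Equivariance of this splitting, together with the slice-level total geodesicity from the previous paragraph, would yield invariance of $S$ under the geodesic flow.
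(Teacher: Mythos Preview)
Your normal-direction argument is fine, but the tangential part has a genuine gap. The isomorphism $\Theta$ of Theorem~\ref{Thm:SliceThm2} is a groupoid isomorphism over $\exp^{-1}$, not a riemannian isometry; transversal invariance only controls the fiberwise metric on $N\calO$, not the full metric $\eta$ on $\Tu_\calO$. Hence geodesics of $(\sfG_0,\eta)$ do not transport to straight lines (or to anything with a clean horizontal/vertical splitting) in the linear model, and there is no reason the ``transported geodesic equation'' should decouple as you suggest. So the step where you infer that a geodesic with nontrivial orbit-tangent component stays in the saturation of $Z_p^{\sfG_p}$ is unjustified.

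The paper avoids this decomposition entirely. Instead of treating tangential and normal directions separately, it produces a single compact group action on a full neighborhood $M$ of $p$ in $\sfG_0$ whose fixed point set is $S_p\cap M$. Concretely: choose a small ball $B\subset\calO$ around $p$ and a section $\tau:B\to s^{-1}(p)$ of the target map with $\tau(p)=u(p)$; then for $g\in\sfG_p$ and $v\in N_q\calO$ set $g\cdot v:=(\tau_q g\tau_q^{-1})\,v$, and transfer this to $M=\Tu^\varepsilon_B$ via $\Theta$. One checks $S_p\cap M = M^{\sfG_p}$, and then invokes the classical fact (Klingenberg, Thm.~1.10.15) that the fixed point set of an isometric action of a compact Lie group on a riemannian manifold is totally geodesic. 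The point is that this $\sfG_p$-action already fixes the orbit directions pointwise (since $B\subset\calO$ is fixed), so the tangential issue you struggle with simply does not arise: both horizontal and vertical parts of $T_qS_p$ lie in the fixed subspace of $T_qM$, and geodesic invariance follows in one stroke.
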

\begin{proof}
  Let $p\in \sfG_0$, and denote by $S_p$ the stratum through $p$ with respect to the
  stratification of $\sfG_0$ by orbit type.
  Consider the orbit $\calO$ through $p$. Let $\pi^N : N\rightarrow \calO$
  be its normal bundle, and $N^\textup{inv} \rightarrow \calO$ the
  invariant bundle, which by definition consists of the bundles of fibers
  $(N_q)^{\sfG_q}$, where $q\in \calO$
  and $\sfG_q$ is the isotropy group of $q$. Then choose
  a sufficiently small open ball $B$ around $p$ in the orbit $\calO$
  through $p$ such that $B$ is geodesically convex in $\calO$ with respect to the
  riemannian metric induced by $\eta$ on $\calO$. After possibly shrinking $B$,
  there exists an $\varepsilon >0$  such that the metric tubular neighborhood
  $(N_{|B},\varepsilon,\exp_{|\Tu^\varepsilon_{B,N_{|B}}})$ exists.
  Denote by $T^\varepsilon_B$ its total space, and from now on the
  restricted normal bundle $N_{|B}$ shortly by $N$. By the slice theorem for
  proper Lie groupoids as formulated in Thm.~\ref{Thm:SliceThm2},
  one can even choose $B$ and $\varepsilon >0$ such that
  $\sfG_{|B}$ acts on $M:= T^\varepsilon_B$, and such that $\sfG_{|M}$ is isomorphic
  to $\big( \sfG_{|B}\ltimes N \big)_{|\Tu^\varepsilon_{B,N}}$ via an
  isomorphism of groupoids $\Theta$ over the map $\exp^{-1}_{|\Tu^\varepsilon_{B,N}}$ on objects.
  In particular one then has
  \begin{equation}
  \label{Eq:Inv}
      S_p \cap M = \exp \big( \Tu^\varepsilon_{B,N^\textup{inv}} \big)
      \quad \text{where $\Tu^\varepsilon_{B,N^\textup{inv}} :=
      \{ v \in N^\textup{inv} \mid \pi^N (v) \in B \text{ and } \| v \| <
      \varepsilon\} $}.
  \end{equation}
  Since $B$ is in particular contractible,
  there exists a smooth section $\tau : B \rightarrow \sfG (p,-)$ over $B$ of
  the target map $t: \sfG (p,-) \rightarrow \sfG_0$ restricted to morphisms
  with source $p$.   One can even achieve that $\tau (p) = u(p)$.
  Let us now define an action of $\sfG_p$ on $\Tu^\varepsilon_{B,N} $.
  To this end let $v \in \Tu^\varepsilon_{B,N}$, $q := \pi^N (v)
  \in B$, and $g\in \sfG_p$. Then put
  \[
    gv := \big( \tau_q g \tau_q^{-1} \big) v .
  \]
  Clearly, this defines the desired action. Via the isomorphism $\Theta$
  between $\sfG_{|M}$ and
  $\big( \sfG_{|B}\ltimes N \big)_{|\Tu^\varepsilon_{B,N}}$
  the $\sfG_p$-action is transferred to $M$. By Eq.~\eqref{Eq:Inv}
  and the equivariance of the exponential map one obtains
  \[
    S_p \cap M = M^{\sfG_p} .
  \]
  By \cite[Thm.~1.10.15]{KliRG}, the right hand side of this equality is
  totally geodesic.  Since it is enough to show that $S_p$ is locally
  totally geodesic, the claim now follows.
\end{proof}

The next proposition is an equidistance  result for orbits of proper Lie 
groupoids which is crucial for the construction of an appropriate metric
on the orbit space $X$. For singular foliations a corresponding result is 
known \cite{Molorb}. Since we are not aware of an explicit proof 
for the case we consider here, we provide a complete proof below.
\begin{proposition}
\label{Prop:EquiDist}
  Let $\sfG$ be a proper Lie groupoid and $\eta$ a transversally invariant 
  riemannian metric  on $\sfG$. 
  Then the partition of $\sfG_0$ into orbits is \emph{equidistant}
  in the following sense.
  \begin{enumerate}
    \item[{\rm (ED)}]
      For every orbit $\calO$ in $\sfG_0$ there exists a metric
      tubular neighborhood $\Tu_\calO$ such that for every orbit
      $\calO'$ and all $q,q' \in \calO' \cap \Tu_\calO$ the relation
      $d(q,\calO) = d(q',\calO)$ holds true, where $d(q,\calO)$
      denotes the geodesic distance between $q$ and $\calO$.
  \end{enumerate}
\end{proposition}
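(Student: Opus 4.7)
The plan is to reduce to the linearized picture from the slice theorem and exploit transverse invariance of the metric. First, using Theorem \ref{Thm:SliceThm2}, I would choose a continuous function $\varepsilon : \calO \to \R_{>0}$ such that the exponential map identifies $\Tu^\varepsilon_{\calO,N\calO}$ with a metric tubular neighborhood $\Tu_\calO$, and simultaneously provides a groupoid isomorphism $\Theta : \sfG_{|\Tu_\calO} \to (\sfG_{|\calO} \ltimes N\calO)_{|\Tu^\varepsilon_{\calO,N\calO}}$ over $\exp_{|\Tu^\varepsilon_{\calO,N\calO}}^{-1}$ on objects.

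Second, after possibly shrinking $\varepsilon$ pointwise, I would arrange that for every $v \in \Tu^\varepsilon_{\calO,N\calO}$ with base point $p = \pi^N(v)$ the normal geodesic $t \mapsto \exp_p(tv)$ realizes the distance from $\exp(v)$ to $\calO$, i.e.\ $d(\exp(v),\calO) = \|v\|$. This is a standard property of metric tubular neighborhoods and can be obtained from the Gauss lemma combined with Lemma \ref{Lem:orth}: any distance-realizing curve from $q = \exp(v)$ to $\calO$ terminates at a point where its tangent is orthogonal to $T\calO$, so for $\varepsilon$ small enough that the normal exponential is a diffeomorphism onto its image and that convexity estimates apply, such a minimizer must coincide with $t \mapsto \exp_p((1-t)v)$.

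Third, for $q, q' \in \calO' \cap \Tu_\calO$ lying on a common $\sfG$-orbit $\calO'$, write $q = \exp(v)$ and $q' = \exp(v')$ with $v,v' \in \Tu^\varepsilon_{\calO,N\calO}$. Transporting the $\sfG$-arrow from $q$ to $q'$ through $\Theta$ gives an arrow of $\sfG_{|\calO} \ltimes N\calO$ joining $v$ to $v'$; in particular there is $g \in \sfG_{|\calO}$ with $s(g) = \pi^N(v)$, $t(g) = \pi^N(v')$ and $g \cdot v = v'$ under the canonical action of $\sfG_{|\calO}$ on $N\calO$ defined in \eqref{rep-normal-bundle}. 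Transverse invariance of $\eta$ means precisely that this action preserves the fiberwise scalar product on $N\calO$, so $\|v\| = \|v'\|$. Combining with the second step yields $d(q,\calO) = \|v\| = \|v'\| = d(q',\calO)$, which is the desired equidistance property (ED).

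The main technical obstacle lies in the second step: for a non-compact orbit $\calO$ one must produce a \emph{continuous} $\varepsilon : \calO \to \R_{>0}$ that is simultaneously small enough for the linearization in Theorem \ref{Thm:SliceThm2} to apply and small enough for the normal exponential to realize the distance to $\calO$. I would handle this by working locally on a countable cover of $\calO$ by relatively compact open subsets, on each of which both constraints can be met by a constant bound via ordinary Riemannian tubular neighborhood theory, and then patch the local bounds into a continuous global $\varepsilon$ by taking a locally finite continuous minimum, finally intersecting with the function supplied by Theorem \ref{Thm:SliceThm2} to retain the linearization.
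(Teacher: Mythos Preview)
Your proposal is correct and follows essentially the same approach as the paper: apply Theorem~\ref{Thm:SliceThm2} to obtain the metric tubular neighborhood with its groupoid isomorphism $\Theta$, transport an arrow $G\in\sfG_{|\Tu_\calO}$ between $q$ and $q'$ to an arrow $g\in\sfG_{|\calO}$ acting on the normal bundle so that $v'=g\cdot v$, and then invoke transverse invariance to get $\|v\|=\|v'\|$. The paper treats your second step and the final ``technical obstacle'' as already absorbed into the construction of a metric tubular neighborhood (recalled at the beginning of Section~\ref{sec:metric}) together with the choice of $\varepsilon$ made in Section~\ref{sec:metric-slice}, so your extra care there is not needed but does no harm.
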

\begin{proof}
  Let $\calO \subset \sfG_0$ be an orbit of $\sfG$. Let
  $\Tu_\calO^{\varepsilon}$ for some continuous
  $\varepsilon : \calO \rightarrow \R_{>0}$ be a tubular neighborhood
  and $\Theta : \sfG_{|\Tu_\calO^{\varepsilon}} \rightarrow
  \big( \sfG_{|\calO} \ltimes N\calO
  \big)_{|\Tu^{\varepsilon}_{\calO,N\calO}}$ an isomorphism of groupoids
  as in Theorem \ref{Thm:SliceThm2}. Let $q, q'$ be two points in
  $\Tu_\calO^{\varepsilon}$ which both lie in
  some orbit $\calQ$. Then there exists
  an arrow $G\in \sfG_{|\Tu^{\varepsilon}_\calO}$ such that $s(G) =q$ and
  $t(G)=q'$. Let $g = \Pi (G)$ with $\Pi$ as in Theorem  \ref{Thm:SliceThm2}.
  Moreover, let $v,v' \in \Tu^{\varepsilon}_{\calO,N\calO}$ such that
  $q = \exp v$ and $q' = \exp v'$. By Theorem  \ref{Thm:SliceThm2}
  one concludes that $ v' = g v$. Since $\eta$ is a transversally invariant riemannian
  metric, one concludes
  \[
    d(q,\calO) = \| v \| = \| g v \| = \| v' \| = d(q',\calO) .
  \]
  This proves the claim.
\end{proof}
As an application of Proposition \ref{Prop:EquiDist}, we prove a
stability result for  compact orbits of a proper Lie groupoid
$\sfG$ with only connected orbits. We call $\sfG$ \emph{orbit
connected}, if every orbit of $\sfG$ is connected.

Recall that a neighborhood $U$ of an orbit $\calO$ of a groupoid
$\sfG$ is called \emph{invariant}, if every orbit of $\sfG$ which
intersects nontrivially with $U$ is contained in $U$. Following
\cite[Def.~4.2]{cs}, we define an orbit $\calO$ of a Lie groupoid
$\sfG_0$ to be \emph{(topologically) stable}, if $\calO$ admits
arbitrarily small invariant neighborhoods.

\begin{proposition}\label{prop:inv-nbhd} Let $\sfG$ be a proper
Lie groupoid with a transversally invariant riemannian metric $\eta$ on $\sfG_0$.
If all orbits of $\sfG$ are connected, the metric tubular
neighborhood $T^\varepsilon_\calO$ near a compact orbit $\calO$ is
invariant for every sufficiently small constant $\varepsilon>0$. Therefore, a
compact orbit $\calO$ of an orbit connected proper Lie groupoid
$\sfG$ is stable.
\end{proposition}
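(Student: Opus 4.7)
The plan is to combine the metric tubular neighborhood with constant radius supplied by Theorem~\ref{Thm:SliceThm2} for compact orbits with the equidistance property of Proposition~\ref{Prop:EquiDist}, and then to propagate the inclusion of an orbit into the tubular neighborhood using the orbit-connectedness hypothesis.

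First I would apply Theorem~\ref{Thm:SliceThm2} to the compact orbit $\calO$ and fix a constant $\varepsilon_0>0$ such that $T_\calO^{\varepsilon_0}$ is a metric tubular neighborhood for which the linearization isomorphism $\Theta$ and the equidistance statement of Proposition~\ref{Prop:EquiDist} hold. Since $\calO$ is a compact embedded submanifold of the riemannian manifold $(\sfG_0,\eta)$, shrinking $\varepsilon_0$ if necessary, one has the metric characterization
\[
  T_\calO^{\varepsilon_0}=\{p\in\sfG_0\mid d(p,\calO)<\varepsilon_0\},
\]
and the analogous identity with $\varepsilon$ replacing $\varepsilon_0$ for any $0<\varepsilon\leq\varepsilon_0$.

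Now fix any such $\varepsilon$ and let $\calO'$ be an orbit meeting $T_\calO^\varepsilon$. By Proposition~\ref{Prop:EquiDist}, the continuous function $f\colon x\mapsto d(x,\calO)$ is constant, with some value $r$, on the subset $\calO'\cap T_\calO^{\varepsilon_0}$; since $\calO'\cap T_\calO^\varepsilon\ne\varnothing$, one has $r<\varepsilon$. Set $A:=\calO'\cap T_\calO^\varepsilon$. This is a nonempty open subset of $\calO'$. To see that $A$ is closed in $\calO'$, let $(q_n)_{n\in\N}$ be a sequence in $A$ converging to $q^\ast\in\calO'$. Then $d(q_n,\calO)=r$ for all $n$, so by continuity of $d(\cdot,\calO)$ we obtain $d(q^\ast,\calO)=r<\varepsilon\leq\varepsilon_0$, hence $q^\ast\in T_\calO^\varepsilon$ and $q^\ast\in A$. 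Since $\calO'$ is connected by assumption, $A=\calO'$, i.e.\ $\calO'\subset T_\calO^\varepsilon$. Thus $T_\calO^\varepsilon$ is invariant for every $0<\varepsilon\leq\varepsilon_0$.

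For the stability claim, compactness of $\calO$ ensures that the sets $\{T_\calO^\varepsilon\}_{0<\varepsilon\leq\varepsilon_0}$ form a neighborhood basis of $\calO$ in $\sfG_0$: given any open $U\supset\calO$, a standard compactness argument yields $\varepsilon>0$ with $T_\calO^\varepsilon\subset U$. Combined with the invariance just proved, $\calO$ admits arbitrarily small invariant neighborhoods, so $\calO$ is stable in the sense of \cite{cs}. The main subtle point in the argument is ensuring the limit $q^\ast$ remains inside $T_\calO^{\varepsilon_0}$ where equidistance is available; this is exactly guaranteed by the strict inequality $r<\varepsilon\leq\varepsilon_0$, and it is precisely at this step that the choice of a \emph{constant} $\varepsilon_0$ (rather than a variable function on $\calO$) afforded by compactness of $\calO$ is essential.
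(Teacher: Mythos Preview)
Your argument is correct and follows the same strategy as the paper: use equidistance (Proposition~\ref{Prop:EquiDist}) to show that $A=\calO'\cap T_\calO^\varepsilon$ is both open and closed in the connected orbit $\calO'$. Your treatment is slightly more streamlined in that you invoke the metric characterization $T_\calO^\varepsilon=\{p\mid d(p,\calO)<\varepsilon\}$ directly and pass the constant value $r$ to the limit via continuity of $d(\cdot,\calO)$, whereas the paper first arranges $\overline{T_\calO^\varepsilon}\subset T_\calO^\delta$ so that the limit point automatically lies in the larger tube where (ED) applies; both routes arrive at $d(q^\ast,\calO)=r<\varepsilon$ in the same way.
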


\begin{proof}
By Proposition \ref{Prop:EquiDist}, we can fix a metric tubular neighborhood
$T^\delta_\calO$ such that property (ED) holds true.

Since $\calO$ is compact, the closure of the metric tubular neighborhood
$T^\varepsilon_\calO$ of $\calO$ is completely contained in $T^\delta_\calO$, if
the constant $\varepsilon>0$ is chosen to be sufficiently small. Furthermore, any
neighborhood of $\calO$ contains a metric tubular neighborhood $T^\varepsilon_\calO$
as a subset. Hence, to prove the proposition, it suffices to show
that for such a small $\varepsilon$, an orbit $\calO'$ of $\sfG$ is completely
contained  in $T^\varepsilon_\calO$, if it intersects $T^\varepsilon_\calO$ nontrivially.

Define $A$ to be the intersection between $\calO'$ and
$T^\varepsilon_\calO$. Since $T^\varepsilon_\calO$ is an open subset
of $\sfG_0$, $A$ is relatively open in $\calO'$. Since $\sfG_0$ is
Hausdorff, $\calO'$ is Hausdorff as well. Let us show that $A$ is
closed in $\calO'$, ormore precisely  that any $x$ in the closure
$\overline{A}$ lies in $A$ as well. So let $x \in \overline{A}$. By
construction of $A$, $x$ is in $\calO'$ and is contained in the
closure of the $\varepsilon$-tubular neighborhood
$T^\varepsilon_\calO$ which is a subset of $T^\delta_\calO$. Since
the closure of $T^\varepsilon_\calO$ is contained in
$T^\delta_\calO$, property (ED) implies that each point in the
intersection between $\calO'$ and $T^\delta_\calO$ has equal
distance to $\calO$. Since $x$ is in $T^\delta_\calO$, $x$ has the
same distance to $\calO$ as any point in $A\subset \calO'$. As
$A\subset T^\varepsilon_\calO$ and the distance function is
continuous, the distance from $x$ to $\calO$ is less than
$\varepsilon$. Therefore, $x$ lies in $ T^\varepsilon_\calO$ and in
$\calO'$, hence one gets $x\in A$. Thus $A$ is both open and closed
in $\calO'$. Since $\calO'$ is connected, $A$ has to coincide with
$\calO'$, and $T^\varepsilon_\calO$ is invariant.
\end{proof}
\begin{remark}
It is natural to expect that a similar result as Proposition
\ref{prop:inv-nbhd} holds true for singular riemannian foliations.
We leave the details to the diligent readers.
\end{remark}
\begin{remark}
Combining Proposition \ref{prop:inv-nbhd} with Theorem
\ref{Thm:SliceThm2}, we conclude that a compact orbit $\calO$ inside
an orbit connected proper Lie groupoid has arbitrarily small
invariant neighborhoods $T^\varepsilon_\calO$ such that
$\sfG_{|T^\varepsilon_\calO}$ is isomorphic to $(\sfG_\calO\ltimes
N\calO)_{|T^\varepsilon_\calO}$. We suggest the reader to compare this
result to Corollary 2 and Remark 1.2 in \cite{cs}.
\end{remark}
\subsection{Proof of the main theorem}
Under the assumptions of the proposition assume now that the orbit
space $X$ is connected. Choose for each point $q\in \sfG_0$
a relatively compact subset $B_q \subset \subset \calO_q$ such that
the closure $\overline{B_q}$ is diffeomorphic to a closed bounded ball
in some euclidean space. Choose $\varepsilon_q >0$ such that
$\Tu_{\overline{B_q}}^{\varepsilon_q}$ is a metric tubular neighborhood
of $\overline{B_q}$ in $\sfG_0$. After possibly shrinking the $\varepsilon_q$
one can achieve that  $\varepsilon_q \leq \varepsilon_{\calO_q} (p)$ for all
$p\in \overline{B_q}$ where $\varepsilon_{\calO_q} :{\calO_q} \rightarrow \R^{>0}$ is
chosen such that $\Pi$ and $\Theta$ as in Theorem  \ref{Thm:SliceThm2}  exist for
the tubular neighborhood $\Tu_{\calO_q}^{\varepsilon_{\calO_q}}$.
After possibly shrinking
the $\varepsilon_q$, one can find by the proof of Proposition
\ref{equivtofinite} a countable subset $Q\subset \sfG_0$ such that
the following holds true:
\begin{enumerate}[(1)]
\item The groupoid $\sfH$ defined by
      $\sfH_0 := \bigcup_{q\in Q}\Tu_{B_q}^{\varepsilon_q}$
      and $\sfH:= \sfG_{|\sfH_0}$ is a proper Lie groupoid with
      orbits of finite type.
\item The canonical embedding $\iota :\sfH \hookrightarrow \sfG$ is
      a weak equivalence.
\item The family $\big( \orbproj (\Tu_{B_q}^{\varepsilon_q}) \big)_{q\in Q}$
      is a locally finite open covering of the orbit space $X$.
\end{enumerate}
Since by $(2)$ the inclusion $\sfH\hookrightarrow\sfG$ is a weak equivalence, they have the same quotient space and we can use $\sfH$ to write down a semi-metric on $X$, instead of $\sfG$.
In fact we can arrange this in the following way: Next let $\calO$ and $\calO'$ be orbits in $\sfG_0$.
Since $X$ is connected, there exist orbits $\calO_0, \cdots,\calO_k$
in $\sfG_0$ and points $q_i \in \calO_i \cap \sfH_0$ for $i=1,\cdots, k$ such
that
  \begin{enumerate}[(CS1)]
    \item $\calO_0 =\calO$ and $\calO_k = \calO'$,
    \item $q_i \in \Tu_{\calO_{i-1}\cap\sfH_0}$ for $i=1,\cdots, k$, where
    $\Tu_{\calO_i \cap\sfH_0} \subset \sfH_0$ is as in the proof of the
    preceding proposition applied to the case of the proper Lie groupoid
    $\sfH$ with riemannian metric $\eta_{|\sfH_0}$.
  \end{enumerate}
  We will call $\calO_0, \cdots,\calO_k, q_1,\cdots, q_k$
  a connecting sequence of orbits and points in $\sfH_0$ between $\calO$ and
  $\calO'$.
  Now we define
  \[
    \overline{d} (\calO, \calO') := \inf \big\{ d(q_1 , \calO_0) + \ldots +
    d (q_k , \calO_{k-1} ) \big\},
  \]
  where the infimum is taken over all connecting sequences of orbits
  and points in $\sfH_0$ between $\calO$ and $\calO'$.
  This semi-metric $\overline{d}$ coincides with the quotient (semi)-metric
  for the canonical projection $\sfH_0 \rightarrow X$ as defined in
  \cite[Sec.~$1.16^+$]{GroMSRNRS} and \cite[Def.~3.1.12]{BurBurIvaCMG}:
  Given any connecting sequence $(\calO_i,q_i)$ not necessarily (CS2) and distance
  minimizing geodesics $\gamma_i$ from $\calO_i$ to $q_{i+1}$, we can realize the same
  sum $\sum_{i}d(q_{i+1},\calO_i)$ by introducing more orbits satisfying (CS2) and subdividing
  the geodesics $\gamma_i$.

  We will show in several steps that $\overline{d}$ is a metric indeed,
  and does not depend on the particular choice of $\sfH$, and in fact equals \eqref{semi-metric-G}. \\[1mm]
  {\bf Step 1.}
  Let $\calO$ be an orbit in $\sfG_0$ and let $q$
  be a point in $\Tu_{\calO \cap\sfH_0}$. Assume that $\calQ$ is the orbit
  through $q$ and that $q'$ is another point of $\calQ$. We will show that
  \begin{equation}
  \label{Eq:Step1}
     d (q,\calO) \leq d (q',\calO) .
  \end{equation}
  To this end it suffices to show that for every path
  $\varrho: [0,\ell] \rightarrow \sfG_0$ parameterized by arc length such that
  $\varrho (0) \in \calO$ and $\varrho (\ell) = q'$ the relation
  $d (q,\calO) \leq \ell$ holds true. Let us show that this is the case indeed.
  We can assume that $\varrho$ is piecewise geodesic and meets every orbit in at
  most one point, otherwise we could pass over to a shorter $\varrho$.
  Recall from above the properties of the continuous maps
  $\varepsilon_{\calO_q } :\calO_q \rightarrow \R_{>0}$, $q \in \sfG_0$.
  After possibly shrinking $\varepsilon_{\calO_p}$ for
  $p \in \varrho([0,\ell])$ one can even
  achieve that $\varepsilon_{\calO_p}$ is constant on the relatively compact set
  $(\calO_p \cap \sfH_0 ) \cup
    \pi_{\calO_p} \big(\Tu_p \cap \varrho([0,\ell])\big)$, where
  $\Tu_p$ is the tubular neighborhood $\Tu^{\varepsilon_{\calO_p}}_{\calO_p}$.
  By compactness of the image of $\varrho$ one can now find
  times $t_0,\cdots,t_l$ such that the following holds true:
  \begin{enumerate}[(1)]
  \item One has $t_0=0$ and $t_l=\ell$.
  \item The restricted paths $\varrho_{|[t_i,t_{i+1}]}$ are geodesics for
        $i=0,\cdots,l-1$.
  \item The image $\varrho ([0,\ell])$ is contained in
        $\bigcup_{i=0}^{l} \Tu_{\calO_{q'_i}}$, where
        $q'_i := \varrho (t_i)$ for $i=0,\cdots,l$, and
  \item $\Tu_{\calO_{q'_i}} \cap \Tu_{\calO_{q'_{i+1}}} \cap \varrho ([0,\ell]) \neq
        \emptyset$ for $i=0,\cdots,l-1$.
  \end{enumerate}
  Put $\calO_i := \calO_{q'_i}$ and $\Tu_i:= \Tu_{\calO_{q'_i}}$. Then choose
  $q_l \in \calO_l\cap\sfH_0$ and points $p'_0,\cdots,p'_{l-1}$ such that
  $p_i'\in \Tu_i \cap \Tu_{i+1} \cap \varrho ([0,\ell])$.
  By Theorem \ref{Thm:SliceThm2} one can now find $g_l\in \sfG_{|\calO_l}$ such that
  $q_l= g_l \pi_l (p'_{l-1}) \in \sfH_0$, where $\pi_i$ is the projection of
  the tubular neighborhood $\Tu_i$. Put $p_{l-1} := g_l p'_{l-1} $.
  Then $p_{l-1}$ lies in $\sfH_0$ and $\Tu_{l-1}$.
  Put $q_{l-1} := \pi_{l-1} (p_{l-1})$.
  Since $p_{l-1}\in \sfH_0$, one has $q_{l-1}\in \calO_{l-1}\cap\sfH_0$.
  Moreover, one has
  \[
  \begin{split}
    d(q_l,q_{l-1})& \leq d (p_{l-1},q_l) + d (p_{l-1},q_{l-1}) =
    d (p_{l-1},\calO_l) + d (p_{l-1},\calO_{l-1}) = \\
    & = d (p'_{l-1},\calO_l) + d (p'_{l-1},\calO_{l-1}) \leq
     d (p'_{l-1}, q_l') + d (p'_{l-1}, q_{l-1}') = t_l - t_{l-1}.
  \end{split}
  \]
  Proceed inductively to construct $q_l,p_{l-1}, \cdots, p_0 , q_0 \in \sfH_0$
  such that
  \[
    d(q_{i-1},q_i) \leq t_i - t_{i-1} \quad \text{for $i = 1,\cdots ,l$}.
  \]
  But then
  \[
    d(q,\calO) \leq \sum_{i=1}^l d(q_{i-1},q_i) \leq \sum_{i=1}^l
    t_i - t_{i-1} = \ell
  \]
  what we wanted to show. Hence  Eq.~\ref{Eq:Step1} holds true.
  \\[1mm]
  {\bf Step 2.}
  Like in Step 1 let  $q\in \Tu_{\calO \cap \sfH_0}$. We will show that for
  each connecting sequence of orbits $\calO_0,\cdots,\calO_k$ and points
  $q_1,\cdots ,q_k$ in $\sfH_0$  between $\calO$ and the orbit $\calO_q$
  through $q$ the estimate
  \begin{equation}
  \label{Eq:est}
    d(q,\calO) \leq  d(q_1,\calO_0) + \ldots + d(q_k,\calO_{k-1})
  \end{equation}
  holds true. This implies in particular that
  \begin{equation}
  \label{Eq:QuotMetProp}
    \overline{d} (\calO, \calO_q ) = d (q ,\calO).
  \end{equation}
  Let us prove \eqref{Eq:est} first in the case where all $q_i$ are  assumed to
  be in $\Tu_{\calO \cap \sfH_0}$. We will prove the claim by induction on
  $k$. Assume first that $k=2$. By construction of the tubular neighborhood
  $\Tu_{\calO \cap \sfH_0}$ there exists an arrow
  $g \in \sfH_{|\calO}$ such that $\pi_1 (q_2) = g q_1$, where $\pi_i$
  for $0\leq i \leq k$ denotes the projection of the tubular neighborhood
  $\Tu_{\calO_i \cap \sfH_0}$. Then $s(g)= \pi_0 (q_1)$. Put
  $p:= t(g)$. By  Proposition \ref{Prop:EquiDist} one concludes
  \[
    d(\pi_1 (q_2) ,p) = d(\pi_1 (q_2) ,\calO_0 ) =
    d(q_1 ,\calO_0) = d(q_1, \pi_0 (q_1) ) .
  \]
  From this one gets
  \[
    d(q,\calO) \leq d(q_2, \pi_1 (q_2) ) + d(\pi_1 (q_2) , \calO_0) =
    d(q_2 ,\calO_1) + d(q_1 ,\calO_0),
  \]
  which proves the claim for $k=2$. Assume that the claim holds for some $k$, and
  consider a connecting sequence of orbits $\calO_0,\cdots,\calO_{k+1}$ and points
  $q_1,\cdots ,q_{k+1}$ in $\sfH_0$  between $\calO$ and the orbit $\calO_q$,
  where it is assumed that all the $q_i$ are in $\Tu_{\calO\cap \sfH_0}$.
  By the inductive assumption and the initial step one gets
  \[
    d(q_1,\calO_0) + \ldots + d(q_k,\calO_{k-1}) + d(q_{k+1},\calO_k) \geq
    d(q_k,\calO_0) + d(q_{k+1},\calO_k) \geq d ( q,\calO) .
  \]
  This finishes the claim for the case, where all points $q_i$ are in
  $\Tu_{\calO \cap\sfH_0}$. If one of the points $q_i$ is not in
  $\Tu_{\calO \cap\sfH_0}$ the claim is obvious.
  \\[1mm]
  {\bf Step 3.}
  Next we will show that $\overline{d}$ is non-degenerate, i.e.~that for orbits
  $\calO$ and $\tilde\calO$ with $ \overline{d} (\calO,\tilde\calO) =0$
  the relation $\calO = \tilde\calO$ holds true. So let us assume that
  $ \overline{d} (\calO,\tilde\calO) =0$. In case there is some
  $q \in \tilde\calO \cap \Tu_{\calO \cap \sfH_0}$, Step 2 entails that
  \[
   d (q,\calO) = \overline{d}(\calO,\tilde\calO)=0 ,
  \]
  which implies that $ q \in \calO$ and thus $\tilde\calO = \calO$.
  Now consider the case $\tilde\calO \cap \Tu_{\calO\cap\sfH_0}= \emptyset$.
  We will show that in this case  $ \overline{d} (\calO,\tilde\calO) =0$
  does not hold true.
  Let $\calO_0,\cdots,\calO_k$, $q_1,\cdots ,q_k$ be a connecting sequence of
  orbits and points in $\sfH_0$  between $\calO$ and the orbit $\tilde\calO$.
  Let $\varepsilon >0$ such that
  $\Tu_{\calO\cap\sfH_0} = \Tu_{\calO\cap\sfH_0}^\varepsilon$. Consider the smallest
  $i$ such that $\calO_i \cap \Tu_{\calO\cap\sfH_0} \neq \emptyset$ but
  $q_{i+1} \notin \Tu_{\calO\cap\sfH_0}$. Then there is a point
  $q' \in \Tu_{\calO\cap\sfH_0}$ in the unique geodesic of minimal length
  connecting $q_{i+1}$ with $\calO_i$ such that
  $d(q',\calO)\geq \frac{\varepsilon}{2}$. But this entails that
  \[
  \begin{split}
     d & (q_1,\calO_0) + \ldots + d(q_k,\calO_{k-1})  = \\
      &=  d(q_1,\calO_0) + \ldots + d(q_i,\calO_{i-1}) +
    d(q',\calO_i) + d(q_{i+1}, q') + \ldots + d(q_k,\calO_{k-1}) \geq
    \frac{\varepsilon}{2} ,
  \end{split}
  \]
  hence that $\overline{d}(\calO,\tilde\calO) \geq \frac{\varepsilon}{2}>0$.
  This finishes Step 3 and proves in particular that $\overline{d}$ is
  a metric on $X$ indeed.
  \\[1mm]
  {\bf Step 4.}
  Next we show that the canonical projection $\orbproj : \sfH_0 \rightarrow X$ is a
  submetry that means that
  $\orbproj \big( B_r (p) \big) = B_r \big(\orbproj (p)\big)$ for all $p\in \sfH_0$
  and $r>0$. Let $q \in  B_r (p)$, and denote by $\calO$ the orbit through
  $p$ and by $\calO_q$ the orbit through $q$. In other words let $\calO = \orbproj (p)$
  and $\calO_q = \orbproj (q)$. Let us prove that $\overline{d}(\calO,\calO_q) <r$.
  To this end choose a piecewise geodesic $\gamma:[0,\ell] \rightarrow \sfH_0$
  parameterized by arc length such that $\gamma (0)=p$ and $\gamma (\ell)=q$.
  Let $0=t_0 < t_1 < \cdots t_{k-1} <  t_k= \ell$
  be a partition such that $\gamma_{|[t_{i-1},t_i]}$ is a geodesic for $i=1,\cdots,k$.
  Let $q_i:=\gamma (t_i)$ and $\calO_i$ be the orbit through $q_i$.
  Then $\calO_0,\ldots,\calO_k,q_1,\ldots,q_k$ is a connecting sequence of orbits and
  points in $\sfH_0$ between $\calO$ and $\calO_q$. Moreover,
  \[
    \overline{d}(\calO,\calO_q) \leq d(\calO_0, q_1 ) + \ldots + d(\calO_{k-1},q_k)
    \leq (t_1-t_0) + \ldots + (t_k - t_{k-1}) =
    \ell < r,
  \]
  where $t_i - t_{i-1} $ is the length of the path  $\gamma_{[t-{i-1},t_i]}$.
  This proves that $\orbproj (q) \in B_r \big(\orbproj(p)\big)$.

  Now let $\tilde\calO$ be an orbit such that $\overline{d}(\calO,\tilde\calO) <r$.
  Then there exists a connecting sequence $\calO_0,\ldots,\calO_k,q_1,\ldots,q_k$
  of orbits and points in $\sfH_0$ between $\calO$ and $\tilde\calO$ such that
  \[
    d(\calO_0, q_1 ) + \ldots + d(\calO_{k-1},q_k) < r.
  \]
  Now put $p_0:= p$, and choose $g_1\in \sfH_0$ such that
  $p_0= g_1 \pi_{\calO_0} (q_1)$. Put $p_1 := g_1 q_1$. Note that $p_1$ is well-defined
  by Thm.~\ref{Thm:SliceThm2} since $g_1\in \sfH_{|\calO_0\cap\sfH_0}$.
  By construction one obtains $p_1\in \Tu_{\calO_0\cap\sfH_0}$ and
  $d(\calO_0,q_1) =d(p_0,p_1)$. Assume that we have constructed a sequence
  $p_0,\ldots,p_l\in \sfH_0$, $l<k$, such that $p_{i+1}\in \Tu_{\calO_i\cap\sfH_0}$ and
  $d(\calO_i,q_{i+1}) =d(p_i,p_{i+1})$ for $i=0,\ldots , l-1$.
  Choose $g_{l+1}\in \sfH_1$ such that $p_l = g_{l+1}\pi_{\calO_l} (q_{l+1}) $.
  Put $p_{l+1} := g_{l+1} q_{l+1}$. Then $p_{l+1}\in \Tu_{\calO_l\cap\sfH_0}$ and
  $d(\calO_l,q_{l+1}) =d(p_l,p_{l+1})$. Thus we have constructed inductively
  $p_0,\ldots , p_k$ such that
  $d(\calO_i,q_{i+1}) =d(p_i,p_{i+1})$ for $i=0,\ldots , k-1$.
  Let $\gamma_i : [0,\ell_i] \rightarrow \sfH_0$ be the geodesic
  parameterized by arc length connecting $p_i$ with $p_{i+1}$, and let
  $\gamma :[0,\ell] \rightarrow \sfH_0$ with $\ell:=\ell_0 +\ldots +\ell_{k-1}$
  the composition of paths
  $\gamma_{k-1} \star \ldots \star \gamma_0$. Then $\ell$ is the length of $\gamma$
  and
  \[
    \ell:= d(p_0,p_1) + \ldots + d(p_{k-1},p_k) =
    d(\calO_0, q_1 ) + \ldots + d(\calO_{k-1},q_k) < r.
  \]
  This implies that $p_k \in B_r (p)$ and  finishes the proof that $\pi$ is
  a submetry. Finally in Step 4 observe that $\pi$ being a submetry implies
  that the topology induced by the metric $\overline{d}$ on $X$ coincides
  with the quotient topology of $\pi$.
  \\[1mm]
  {\bf Step 5.}
  Since the geodesic distance on $\sfH_0$ is a length space metric
  and $\orbproj : \sfH_0 \rightarrow X$  a submetry, it follows by
  \cite[Prop.~1]{BurBurIvaCMG} that the metric $\overline{d}$ on $X$ is a
  length space metric as well.
  Since a length space metric is determined by its values on an arbitrarily small
  neighborhood of the diagonal, it follows by Eq.~\eqref{Eq:QuotMetProp}
  that $\overline{d}$ is independent of the particular choice of $\sfH_0$
  or the tubular neighborhoods $\Tu_\calO$, and is uniquely determined
  Eq.~\eqref{Eq:QuotMetProp}.
 \\[1mm]
  {\bf Step 6.}
In this last step, we show that the metric $\bar{d}$ constructed using a choice of $\mathsf{H}\subset\sfG$ equals the one induced by $\sfG$ by means of formula \eqref{semi-metric-G}, thereby proving that this indeed defines a metric, not just a semi-metric. Denote by $\bar{d}_\sfH$ and $\bar{d}_\sfG$ the two (semi-)metrics constructed using $\sfH$ and $\sfG$. Clearly we have that $\bar{d}_\sfG\leq \bar{d}_\sfH$. Suppose now that $\bar{d}_\sfG< \bar{d}_\sfH$. This means that there exists a connecting sequence $(\calO_i,q_i),~i=0,\ldots,k$ in $\sfG$ such that
\[
\sum_{i=1}^k d(q_i,\calO_{i-1})<\bar{d}_\sfH(\calO_0,\calO_k),
\]
and $q_i\not\in \sfH_0$ for some $i$. But by choosing balls around
those points $q_i$, one constructs a different $\sfH'$, also Morita
equivalent to $\sfG$, such that $q_i\in\sfH'_0$ for all $i=1,\ldots,
k$. But then we have
\[
\bar{d}_{\sfH'}(\calO_0,\calO_k)\leq \sum_{i=1}^k d(q_i,\calO_{i-1}).
\]
It follows from step 5 that $\bar{d}_\sfH=\bar{d}_{\sfH'}$, contradicting the previous inequality, and therefore we see that $\bar{d}_\sfG=\bar{d}_\sfH$.

 \vspace{3mm}

  We have now the crucial ingredients for the

\proofthm{Thm:LocMetSrucOrbitspace}
 (1) to (3) have been proven in Step 1 to Step 5.
\begin{enumerate}[({ad} 1)]
\setcounter{enumi}{3}
  \item
    By \cite[Prop.~1]{BurBurIvaCMG} again, $X$ inherits from
    $(\sfG_0,d)$ the property of being complete.
    Hence if $(\sfG_0,d)$ is complete, then $X$ is complete as well, and
    by the Theorem of Hopf--Rinow--Cohn--Vossen
    (cf.~\cite[Thm.~2.5.28]{BurBurIvaCMG}) every closed bounded ball in $X$ is
    compact.
  \item
    According to \cite[p.~16]{BurGroPerADASCBB} the property of being an
    Alexandrov space of Hausdorff dimension $\leq d$ is inherited by its quotient space
    under a submetry.
\end{enumerate}
\proofend
\section{Triangulation of  the orbit space of a proper Lie groupoid}
\label{sec:triangularization}
In this section we will show that the orbit space of a proper Lie groupoid
has a triangulation subordinate to the stratification by orbit types and then
will derive some consequences from this. 

Recall that a \emph{simplicial complex} is a collection $\calK$ 
of (closed) simplices in some $\R^n$ such that every face of a simplex of $\calK$ is
in $\calK$ and such that the intersection of any two simplices of $\calK$ is a
face of each of them, see \cite[\S 2]{MunkresBook}. 
A \emph{triangulation} of a topological space $X$ then consists of a
simplicial complex $\calK$ together with a homeomorphism 
$\kappa : |\calK| \rightarrow X$, where 
$|\calK| := \bigcup\limits_{\sigma \in \calK} \sigma$ is the underlying space 
of the simplicial complex. 
In case the topological space $X$ carries a stratification $\calS$, we 
say that a triangulation  $(\calK,\kappa)$ of $X$ is \emph{subordinate} to 
$\calS$, if for each stratum $S$ the preimage $\kappa^{-1} (\overline{S})$ is the
underlying space of a simplicial subcomplex of $\calK$, cf.~\cite[Sec.~3]{gor}. 

\begin{theorem}[{\cite[5.~Proposition]{gor}, \cite[3.7.~Corollary]{verona}}]
  Let $(X,\calS)$  be a controllable stratified space. Then there exists a 
  triangulation $(\calK,\kappa)$ of $X$ which is compatible with $\calS$. 
\end{theorem}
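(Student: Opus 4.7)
The plan is to proceed by induction on the depth of the stratification $\calS$, using the control data $(\sfT_S)_{S\in\calS}$ to propagate a triangulation from lower strata into tubular neighborhoods of higher ones. First I would well-order the strata compatibly with the incidence partial order, so that each stratum $S$ is triangulated only after all $R < S$. The base case is the collection of minimal strata: these are smooth manifolds by (DC1) with no proper stratum in their closure, and hence admit simplicial triangulations by the classical triangulation theorem for smooth manifolds.

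For the inductive step, let $X_{<S}$ denote the closed union $\bigcup_{R<S}\overline{R}$ and suppose it has already been endowed with a triangulation $\calK_{<S}$ subordinate to the induced stratification. Choose a continuous positive function $\varepsilon$ on $S$ sufficiently small that the level set $L_S^\varepsilon := \{x \in \Tu_S \mid \varrho_S(x) = \varepsilon(\pi_S(x))\}$ is defined and, by axiom (TB4), $(\pi_S,\varrho_S)|_{\Tu_{S,R}}$ is a submersion for every $R > S$. Then $L_S^\varepsilon$ inherits a stratification by the sets $L_S^\varepsilon \cap R$, all of strictly smaller depth than $S$, and the pair $(\pi_S,\varrho_S)$ identifies the punctured tube $\Tu_S \setminus S$ with an open mapping cylinder over $S$ with fibers homeomorphic to the cone on $L_S^\varepsilon$. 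The control conditions (CD1) and (CD2) imply that along the cone lines of $\pi_S$ the retraction preserves the stratification, and consequently $\calK_{<S}$ restricts to a triangulation of $L_S^\varepsilon \cap X_{<S}$ compatible with its induced strata.

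Next I would pick a simplicial triangulation of the smooth manifold $S$ and, possibly after a joint subdivision, extend it to $\overline{\Tu_S}$ by simplicially triangulating the mapping cylinder of $\pi_S|_{L_S^\varepsilon}: L_S^\varepsilon \to S$, joining the already chosen triangulation of $L_S^\varepsilon \cap X_{<S}$ to that of $S$ one simplex at a time. Gluing to $\calK_{<S}$ along $L_S^\varepsilon \cap X_{<S}$ produces a triangulation $\calK_{\le S}$ of $X_{<S} \cup \overline{\Tu_S}$ in which $\overline{S}$ is the underlying space of a simplicial subcomplex. Iterating through all strata and passing to the direct limit, with local finiteness of the stratification ensuring convergence, yields the desired triangulation $(\calK,\kappa)$ of $X$ subordinate to $\calS$.

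The main obstacle will be the compatibility of the mapping cylinder structures obtained from different tubes: whenever $\Tu_S \cap \Tu_R \ne \emptyset$ for strata $R, S$, one must invoke the control relations $\pi_S \circ \pi_R = \pi_S$ and $\varrho_S \circ \pi_R = \varrho_S$ to see that the two induced collar structures agree in the overlap up to a common simplicial subdivision, and in particular that the $\varepsilon$-functions can be chosen coherently across the whole family. This is exactly the technical core of the Goresky and Verona proofs referenced, and the local contractibility of $(X,\calS)$ noted in Remark \ref{Rem:LocCon} supplies the obstruction-theoretic flexibility needed at each inductive step to realize the required combinatorial adjustments.
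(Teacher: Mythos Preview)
The paper does not give a proof of this theorem; it is stated as a citation from Goresky \cite{gor} and Verona \cite{verona} and then immediately applied as a black box to deduce Corollary~\ref{cor:triangulation}. There is therefore nothing in the paper to compare your argument against.

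That said, your sketch captures the broad inductive strategy of the cited references: one builds the triangulation stratum by stratum, using the tubes $(\Tu_S,\pi_S,\varrho_S)$ to identify a neighborhood of each stratum with a mapping cylinder over $S$ and then coning/joining a previously constructed triangulation of the link $L_S^\varepsilon$ down to a smooth triangulation of $S$. You are also right that the substantive difficulty lies in making the collar/cone structures from different tubes match simplicially on overlaps, which is where the control conditions (CD1)--(CD2) are used and where the actual work in \cite{gor,verona} resides. Your outline is a fair summary of that work, though as written it is a sketch rather than a proof: the ``joint subdivision'' and ``gluing along $L_S^\varepsilon\cap X_{<S}$'' steps hide nontrivial simplicial approximation and isotopy arguments (in particular, making the link triangulations PL-compatible with the induced triangulation of $S$ and coherent across all $R>S$), and the appeal to local contractibility from Remark~\ref{Rem:LocCon} is not actually what drives those adjustments in the cited proofs.
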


Since by Cor.~\ref{cor:stratification} the orbit space of a proper Lie groupoid is
a controllable stratified space, the theorem immediately implies
the following result.

\begin{corollary}
\label{cor:triangulation}
  Let $X$ be the orbit space of a proper Lie groupoid $\sfG$. Then there exists 
  a triangulation of $X$ which is compatible  with the stratification of $X$
  by orbit types.
\end{corollary}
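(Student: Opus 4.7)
The plan is to derive the corollary as a direct consequence of the triangulation theorem for controllable stratified spaces (the theorem cited just above, due to Goresky and Verona) combined with the results already established for the orbit space. The key observation is that every hypothesis needed to invoke the abstract triangulation theorem has already been verified for $X$ earlier in the paper.

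First I would recall that by Corollary \ref{cor:stratification}, the orbit space $X=\sfG_0/\sfG$ carries a canonical stratification $\calS$ (the one assigning to $\calO\in X$ the germ $\calS_\calO$ defined in \eqref{Eq:DefStrat}) together with a smooth structure represented by the sheaf $\calC^\infty_X$ of invariant smooth functions. Moreover, in that corollary it is shown that $X$ satisfies Whitney's condition B in every singular chart and, by invoking Theorem \ref{SaExKoDaWhi}, admits a system of smooth control data $(\sfT_S)_{S\in\calS}$. In particular, $(X,\calS)$ is a controllable stratified space in the sense of Definition \ref{DefKoDat}.

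Having checked controllability, I would then simply apply the triangulation theorem stated immediately above the corollary: there exists a simplicial complex $\calK$ and a homeomorphism $\kappa:|\calK|\to X$ such that for every stratum $S\in\calS$ the preimage $\kappa^{-1}(\overline{S})$ is the underlying space of a simplicial subcomplex of $\calK$. The final step is to identify the stratification $\calS$ from \eqref{Eq:DefStrat} with the orbit-type stratification: this is precisely the content of Proposition \ref{prop:partition}, which asserts that the germs of the partition by weak orbit type $\thicksim_o$ on $X$ coincide with $\calS$. Thus the triangulation $(\calK,\kappa)$ is automatically subordinate to the orbit-type stratification, which completes the proof.

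I do not anticipate any serious obstacle, since all the substantive work (existence of the stratification, of the smooth structure, of the Whitney B property, and of the smooth control data) has been done in Section \ref{sec:stratification}, and the identification with the orbit-type stratification is established in Proposition \ref{prop:partition}. The only thing worth double-checking is that the notion of "compatibility" with the stratification used in the triangulation theorem matches the one relevant here; this is standard and follows from the definition of a triangulation subordinate to a stratification given above.
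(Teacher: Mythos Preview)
Your proof is correct and follows exactly the paper's own argument: the paper simply notes that by Corollary~\ref{cor:stratification} the orbit space is a controllable stratified space, so the Goresky--Verona triangulation theorem applies directly. Your additional remark invoking Proposition~\ref{prop:partition} to identify the canonical stratification $\calS$ with the orbit-type stratification is a helpful clarification that the paper leaves implicit.
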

 
Recall now that by  a \emph{good covering} of a topological space $X$ 
one understands an open covering $\calV$  such that for every finite family 
$V_1,\cdots,V_k$ of elements of $\calV$ the intersection 
$V_1 \cap \cdots \cap V_k$ is either empty or contractible. It appears to be 
folklore that every triangulable topological space has a good open covering. 
Since we could not find a full proof in the literature, we present here an outline 
of the argument based on a remark by \cite[p.~190]{BoTu}. 

So assume to be given a  topological space $X$ with a triangulation 
$(\calK,\kappa)$. 
Denote for every simplex $\sigma \in \calK$ by $\clstar{\sigma}$ the 
\emph{closed star} of $\sigma$, which is defined by
\[
 \clstar{\sigma} := \bigcup_{\tau \in \calK, \: \sigma \subseteq \tau} \kappa (\tau) ,
\]
and by $\opstar{\sigma}$ the \emph{open star} of $\sigma$, 
i.e.~the interior of $\clstar{\sigma}$. Let $\calK_0$ be the set of all vertices or
in other words $0$-simplices of $\calK$. The set of open stars $\opstar{v}$, $v\in \calK_0$
then forms an open covering of $X$. Moreover, if $v_1,\ldots,v_k \calK_0$ are finitely many 
pairwise distinct vertices such that
\[
   \opstar{v_1} \cap \ldots \cap \opstar{v_k} \neq \emptyset,
\]
then $\opstar{v_1} \cap \ldots \cap \opstar{v_k}$ is contractible. Let us briefly provide
a sketch of this. Let $\sigma$ be the $(k-1)$-simplex from $\calK$ having vertices 
$v_1,\ldots,v_k$. Let $v$ be the barycenter of $\sigma$. Since the image of the 
interior of $\sigma$ under $\kappa$ lies in 
$\opstar{v_1} \cap \ldots \cap \opstar{v_k}$, the point $\kappa (v) $ is 
an element of $\opstar{v_1} \cap \ldots \cap \opstar{v_k}$. If now $x$ is 
another point in $\opstar{v_1} \cap \ldots \cap \opstar{v_k}$, then $\kappa^{-1}(x)$
lies in a simplex $\tau \in \calK$ which has $v_1,\ldots ,v_k$ among its vertices.
Since $\tau$ is convex, the straight line 
$\ell_{\kappa^{-1}(x),v}$ between $\kappa^{-1} (x)$ and $v$ runs in $\tau$, hence 
in $|\calK|$. This implies that 
$\kappa (\ell_{\kappa^{-1}(x),v}) \subset \opstar{v_1} \cap \ldots \cap \opstar{v_k}$, hence
$\opstar{v_1} \cap \ldots \cap \opstar{v_k}$ is contractible to $\kappa (v)$. 
In other words this means that the open covering $\big( \opstar{v} \big)_{v \in \calK_0}$
is a good covering of $X$.

If the topological space $X$ with triangulation $(\calK,\kappa)$ is even locally compact, 
the simplicial complex $\calK$ has to be locally finite (cf.~\cite[Lemma 2.6]{MunkresBook}).
Given an open covering $\calU$ of $X$, one can construct by iterated barycentric subdivison 
of $\calK$ a simplicial complex $\calL$ such that $|\calL| = |\calK|$ and such that 
for each simplex $\sigma \in \calL$ there exists an open set $U_\sigma\in \calU$ with
$\kappa (\sigma) \subset U_\sigma$. 
After further subdivison one can even achieve that for each vertex $v\in \calL_0$ there 
exists a $U_v \in \calU$ with $\opstar{v}\subset \calU$. By Cor.~\ref{cor:triangulation}
and since orbit spaces of proper Lie groupoids are locally compact, one obtains

\begin{proposition}
\label{prop:goodcov}
  Let $X$ be the orbit space of a proper Lie groupoid $\sfG$, and $\calU$ an open covering
  of $X$. Then there exists a locally finite good covering of $X$ which is subordinate to 
  $\calU$. 
\end{proposition}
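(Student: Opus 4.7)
The plan is to directly assemble the good covering from open stars of vertices in a suitably refined triangulation, invoking the discussion already given in the excerpt. First I would apply Corollary \ref{cor:triangulation} to obtain a triangulation $(\calK,\kappa)$ of $X$ compatible with the orbit type stratification. Since $X$ is the orbit space of a proper Lie groupoid, it is locally compact (indeed, $\sfG_0$ is a manifold and the quotient map $\orbproj$ is open), so by \cite[Lemma 2.6]{MunkresBook} the simplicial complex $\calK$ is locally finite.

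Next I would refine $\calK$ to produce a triangulation whose vertex stars are subordinate to $\calU$. The standard tool is iterated barycentric subdivision: as sketched in the paragraph preceding the proposition, one can perform finitely many barycentric subdivisions locally (using local finiteness of $\calK$ and local compactness of $X$) to obtain a simplicial complex $\calL$ with $|\calL|=|\calK|$ such that, for each vertex $v\in \calL_0$, the open star $\opstar{v}$ is contained in some $U_v\in\calU$. The argument is essentially the Lebesgue-number argument applied piece by piece on a compact exhaustion of $X$; local finiteness of $\calK$ lets us carry out subdivisions only where needed without affecting simplices far away.

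Then I would declare the covering $\calV:=\big(\opstar{v}\big)_{v\in\calL_0}$. By the computation recalled in the excerpt, any nonempty finite intersection $\opstar{v_1}\cap\ldots\cap\opstar{v_k}$ is star-shaped with respect to the image under $\kappa$ of the barycenter of the simplex spanned by $v_1,\ldots,v_k$, hence contractible. By construction each $\opstar{v}\subset U_v\in\calU$, so $\calV$ is subordinate to $\calU$. Local finiteness of $\calV$ is inherited from local finiteness of $\calL$: each point $x\in X$ has a neighborhood meeting only finitely many simplices of $\calL$, and therefore only finitely many of the open stars $\opstar{v}$.

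The only real subtlety is the refinement step in the second paragraph: one must check that iterated barycentric subdivision can be carried out so as to make stars arbitrarily small while keeping the complex locally finite and preserving $|\calL|=|\calK|$. For a locally finite complex triangulating a locally compact space this is standard (one exhausts $X$ by compact sets $K_n$ and on each $K_n$ performs only finitely many subdivisions, chosen so that the mesh over $K_n$ drops below the Lebesgue number of $\calU$ restricted to $K_n$); I would therefore only invoke it rather than develop it. Everything else is a direct assembly of pieces already stated in the excerpt.
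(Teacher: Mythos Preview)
Your proposal is correct and follows essentially the same approach as the paper: apply Corollary~\ref{cor:triangulation} to obtain a triangulation, use local compactness of $X$ to get local finiteness of the complex, refine by iterated barycentric subdivision so that vertex stars are subordinate to $\calU$, and take the covering by open stars, which is good by the star-shaped argument already recorded in the text. Your write-up simply makes the subdivision step (via a Lebesgue-number/compact-exhaustion argument) and the local finiteness of the resulting cover more explicit than the paper does.
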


\begin{remark}
 The existence of good coverings on the quotient space of a singular riemannian 
 foliation on a compact manifold has been proven in Prop.~1 in \cite{Wolak} under 
 the (implicitly made) assumption that for each closure of a leave $L$ of the 
 foliation there is an $\varepsilon >0$ such that that any two closures of leaves 
 in $B_\varepsilon (\overline{L})$ can be joint by an orthogonal
 geodesic  contained in $B_\varepsilon (\overline{L})$.
 It is not clear to us whether every singular riemannian foliation satisfies 
 this condition. Prop.~\ref{prop:goodcov} above is not dependent on such 
 a condition and therefore appears to be a new result. 
\end{remark}
\section{A de Rham theorem}
\label{sec:derham}
In this section we shall prove a de Rham theorem for quotients of proper Lie groupoids
in terms of \textit{basic differential forms}. Let $\sfG$ be Lie groupoid, for the moment 
not necessarily proper. We denote by $(A,\rho)$ its Lie algebroid, where 
$\rho:A\to T\sfG_0$ is the anchor map. 
\begin{definition}
\label{basic}
A differential form $\alpha\in\Omega^\bullet(\sfG_0)$ is said to be 
\textit{basic}, if it satisfies the following two conditions:
\begin{enumerate}[(B1)]
\item $\iota_{\rho(X)}\alpha=0$, for all $X\in\Gamma^\infty(A)$.
\item $\alpha$ is (right) $\sfG$-invariant.
\end{enumerate}
\end{definition}
Observe that since the image of the anchor point-wise spans $T\calO$, the tangent spaces to 
the orbits of $\sfG$, $\alpha$ defines a section of the dual of $N\calO$ by $(B1)$, and 
therefore $(B2)$ makes sense. This definition is of course a straightforward 
generalization of the well-known concept of a basic form with respect to an action of 
a Lie group. In terms of the Lie algebroid, condition $(B2)$ above implies that 
$L_{\rho(X)}\alpha=0$ for all $X\in\Gamma^\infty(A)$, and when $\sfG$ is 
source-connected, this is in fact equivalent to $(B2)$.
Using Cartan's formula, one observes that the de Rham differential on 
$\Omega^\bullet(\sfG_0)$ preserves the basic forms. We write 
$\left(\Omega^\bullet_\textup{basic}(\sfG), d\right)$ for the complex of basic forms defined
in this way. Its cohomology is called the \textit{basic cohomology} of $\sfG$, denoted 
$H^\bullet_\textup{basic}(\sfG,\R)$.
\begin{remark}
This definition of basic cohomology generalizes the following two well-known cases:
\begin{itemize}
\item[$i)$] When $\sfG$ is the action groupoid of a Lie group action on a manifold, the definition above
yields the basic cohomology of a quotient space, as in \cite{koszul}.
\item[$ii)$] A Lie groupoid $\sfG$ is a foliation groupoid when the anchor of its Lie algebroid is injective \cite{mm}. In this case one finds the basic cohomology of the leaf space of the associated foliation, defined in \cite{reinhart}.\end{itemize} 
\end{remark}
\begin{proposition}
\label{inv-we}
A weak equivalence $f:\sfG\to\sfH$ induces an isomorphism 
\[
f^*:\left(\Omega^\bullet_{\rm {basic}}(\sfH),d\right)\stackrel{\cong}{\longrightarrow}\left(\Omega^\bullet_{\rm {basic}}(\sfG),d\right),
\]
and therefore an isomorphism on basic cohomology.
\end{proposition}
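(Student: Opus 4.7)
The plan is to construct a two-sided inverse to $f^*$ using the Morita bibundle associated with the weak equivalence. I would set
\[
  Z := \sfH_1 \times_{\sfH_0} \sfG_0 = \big\{(h,x) \in \sfH_1 \times \sfG_0 : s(h) = f_0(x)\big\},
\]
with projections $\pi_\sfH := t \circ \operatorname{pr}_1 : Z \to \sfH_0$ and $\pi_\sfG := \operatorname{pr}_2 : Z \to \sfG_0$. By (ES) the map $\pi_\sfH$ is a surjective submersion; so is $\pi_\sfG$, since $x \mapsto (u(f_0(x)), x)$ is a global section. The commuting left $\sfH$- and right $\sfG$-actions are $h' \cdot (h,x) = (h'h,x)$ and $(h,x) \cdot g = (h \cdot f_1(g), s(g))$; condition (FF) implies that the right $\sfG$-action is free with orbits equal to the fibres of $\pi_\sfH$, so $\pi_\sfH$ is a principal $\sfG$-bundle, and dually $\pi_\sfG$ is a principal $\sfH$-bundle.

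Checking that $f^*$ preserves basic forms is routine, since $df_0$ intertwines the anchors and $f$ intertwines the actions on the normal bundles to the orbits. To invert $f^*$, I would fix $\beta \in \Omega^\bullet_{\rm basic}(\sfG)$ and consider $\pi_\sfG^*\beta \in \Omega^\bullet(Z)$. The key point is that this form is simultaneously right-$\sfG$-basic and left-$\sfH$-basic on $Z$: invariance under both actions follows because $\pi_\sfG$ is $\sfG$-equivariant and $\sfH$-invariant, combined with (B2) for $\beta$; horizontality holds because the fundamental vector fields of both actions lie entirely in the $T\sfH_1$-direction (each action leaves the $\sfG_0$-coordinate infinitesimally fixed), while $\pi_\sfG^*\beta$ is pulled back from $\sfG_0$ and thus automatically annihilates them.

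Once this is established, right-$\sfG$-basicity of $\pi_\sfG^*\beta$ on the principal $\sfG$-bundle $\pi_\sfH$ yields a unique form $\alpha \in \Omega^\bullet(\sfH_0)$ with $\pi_\sfH^*\alpha = \pi_\sfG^*\beta$. Pulling this identity back along the section $\sigma(x) = (u(f_0(x)), x)$ of $\pi_\sfG$, which also satisfies $\pi_\sfH \circ \sigma = f_0$, would give $f^*\alpha = \beta$. To see that $\alpha$ is $\sfH$-basic, $\sfH$-invariance transfers directly through the quotient from that of $\pi_\sfG^*\beta$, while for (B1) I would argue pointwise: any lift through a local section of $\pi_\sfH$ of an anchor vector $\rho_\sfH(Y)$ differs from the $\sfH$-fundamental vector field at the corresponding point of $Z$ by a $\pi_\sfH$-vertical vector, which by the principal bundle structure is a $\sfG$-fundamental vector field; both contributions are annihilated by $\pi_\sfG^*\beta$ via the two basic properties established above.

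The main obstacle will be the descent statement, namely that right-$\sfG$-basicity on $Z$ is exactly the condition required to descend along the principal $\sfG$-bundle $\pi_\sfH$. This requires identifying the vertical distribution of $\pi_\sfH$ with the infinitesimal $\sfG$-action, using freeness (from (FF)) together with dimension counting (from (ES)). Once this is in place, the remaining checks amount to careful bookkeeping on tangent spaces to $Z$. The chain-map property is automatic since $f^*$ commutes with $d$, so the induced isomorphism on basic cohomology follows immediately.
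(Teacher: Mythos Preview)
Your approach via the Morita bibundle $Z = \sfH_1 \times_{\sfH_0} \sfG_0$ is exactly the paper's, and your outline is considerably more detailed than what the paper writes. There is, however, one incorrect step in your horizontality argument.

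You claim that the fundamental vector fields of \emph{both} actions lie entirely in the $T\sfH_1$-direction. This is true for the $\sfH$-action, since $h'\cdot(h,x)=(h'h,x)$ fixes $x$. It is \emph{false} for the $\sfG$-action: from $(h,x)\cdot g=(h f_1(g),s(g))$ one sees that the $\sfG_0$-component of the fundamental vector field at $(h,x)$ is $ds(\dot g(0))$ for a curve $g(\tau)\in t^{-1}(x)$, and this is precisely an anchor vector in $T_x\calO_x=\rho(A_x)$, not zero in general. So your stated reason for $\sfG$-horizontality of $\pi_\sfG^*\beta$ does not hold.

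The conclusion is nonetheless correct, and the fix is immediate: since the $\sfG_0$-component of the $\sfG$-fundamental vector field lies in $\rho(A)$, condition (B1) for $\beta$ gives $\iota_V(\pi_\sfG^*\beta)=0$. In other words, both axioms (B1) and (B2) for $\beta$ are genuinely needed to obtain $\sfG$-basicity of $\pi_\sfG^*\beta$ on $Z$---(B2) for invariance, (B1) for horizontality---whereas your argument as written only invokes (B2). Once this is corrected, the rest of your descent argument and the verification that $\alpha$ is $\sfH$-basic go through as you describe.
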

\begin{proof}
Clearly, any morphism of Lie groupoids $f:\sfG\to\sfH$ induces by pullback a morphism $f^*:\Omega^\bullet_{\rm {basic}}(\sfH)\to\Omega^\bullet_{\rm {basic}}(\sfG)$ commuting with the de Rham differential.
To see that this is an isomorphism when $f$ is a weak equivalence, consider the space $M:=\sfH_1\times_{\sfH_0}\sfG_0$ appearing in condition (ES) of a weak equivalence as stated above.
There are two obvious maps $pr_2:M\to\sfG_0$ and $M\to\sfH_0$, and these are the so-called moment maps for actions of $\sfG$ and $\sfH$ on $M$. These actions are in fact biprincipal, meaning that $\sfG_1\backslash M\cong\sfG_0$ and $M\slash\sfH_1\cong\sfH_0$. (Such objects are also called \textit{Morita bibundles}.) Now if we have a basic form 
$\beta\in\Omega^\bullet_\textup{basic}(\sfG)$, we can consider its pullback $pr_2^*\beta$ to $M$. Since $f$ induces a morphism between the Lie algebroids of $\sfG$ and $\sfH$, one can easily check that $pr_2^*\beta$ is basic with respect to the action of $\sfH$. (The notion of a basic differential form with respect to an action of a Lie groupoid is a straightforward generalization of Definition \ref{basic} above.) Therefore, since the action of $\sfH$ is principal, this form descends to $\sfG_0$, where it is, by construction, $\sfH$-basic.
These two maps are each others inverse, proving the claim.
\end{proof}
\begin{corollary}
Basic cohomology of Lie groupoids is Morita invariant.
\end{corollary}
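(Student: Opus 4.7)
The plan is to deduce this corollary directly from Proposition \ref{inv-we}, using the standard characterization of Morita equivalence as a span of weak equivalences. Recall that two Lie groupoids $\sfG$ and $\sfH$ are Morita equivalent if and only if there exists a Lie groupoid $\sfK$ together with weak equivalences
\[
  \sfG \stackrel{f}{\longleftarrow} \sfK \stackrel{g}{\longrightarrow} \sfH.
\]
Equivalently, one may encode the equivalence by a biprincipal bibundle $M$, from which the intermediate groupoid $\sfK$ can be reconstructed (for instance as the action groupoid of the diagonal $\sfG \times \sfH$-action on $M$, or directly as the fiber product groupoid associated to the two moment maps).

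Once this is granted, the proof consists of a single line: applying Proposition \ref{inv-we} to each of $f$ and $g$ produces isomorphisms of cochain complexes
\[
  \bigl(\Omega^\bullet_{\rm basic}(\sfG),d\bigr) \xrightarrow{\ f^* \ }
  \bigl(\Omega^\bullet_{\rm basic}(\sfK),d\bigr)
  \xleftarrow{\ g^* \ } \bigl(\Omega^\bullet_{\rm basic}(\sfH),d\bigr),
\]
which on passage to cohomology yields the desired isomorphism
\[
  H^\bullet_{\rm basic}(\sfG,\R) \cong H^\bullet_{\rm basic}(\sfH,\R).
\]
Since this isomorphism is manifestly natural in the sense that it is induced by the corresponding morphisms of basic de Rham complexes, it is well-defined on Morita equivalence classes.

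The only conceivable obstacle is the reader's expected familiarity with the equivalence between the ``span'' and ``bibundle'' formulations of Morita equivalence; however, this is standard (see for instance \cite{mm}) and does not require any new argument beyond what has already been established. In particular no further analytic or geometric input from the properness hypothesis is needed here, as Proposition \ref{inv-we} has been proved for arbitrary Lie groupoids.
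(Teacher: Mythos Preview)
Your argument is correct and is exactly the approach the paper intends: the corollary is stated immediately after Proposition \ref{inv-we} with no further proof, since Morita equivalence is generated by weak equivalences and the proposition already gives an isomorphism of basic complexes for each weak equivalence. Nothing more is needed.
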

We now assume that $\sfG$ is proper, so that the quotient $X$ is stratified with a smooth structure.
Over $X$, there is an obvious sheafification of the basic differential forms, resulting in the sheaf
$\Omega^\bullet_\textup{basic}$ that assigns to each open set $U\subset X$ the space  of basic differential 
forms $\Omega^\bullet_\textup{basic}(\sfG_{|\pi^{-1}(U)})$, where $\pi:\sfG_0\to X$ is the canonical projection.
Clearly, $\Omega^\bullet_\textup{basic}(X)$ is the space of basic differential forms on $\sfG_0$ as discussed above. Remark that in degree zero, $\Omega^\bullet_\textup{basic}(X)=C^\infty(X)$.
\begin{lemma}[Poincar\'e Lemma]
Let $\alpha$ be a closed basic differential form on $\sfG_0$. Then each $\calO\in X$ 
has an open neighborhood $U$ together with $\beta\in\Omega^\bullet_\textup{basic}(U)$ 
such that $\alpha=d\beta$.
\end{lemma}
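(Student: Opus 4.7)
The plan is to reduce, via the slice theorem together with Morita invariance of the basic complex, to the classical equivariant Poincar\'e lemma for a compact Lie group acting linearly on a vector space.

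First, fix a point $p\in\calO$ and use Corollary \ref{cor:local} to get a slice $Z$ at $p$ together with an isomorphism of Lie groupoids $\sfG_{|Z}\cong \sfG_p\ltimes V_p$, where $V_p\subset N_p\calO$ is a $\sfG_p$-invariant open ball on which the compact group $\sfG_p$ acts linearly (Zung's theorem, Theorem \ref{Thm:SliceThm}). The saturation $\operatorname{Sat}(Z):=\orbproj^{-1}(\orbproj(Z))=t(s^{-1}(Z))$ is open in $\sfG_0$, and $U:=\orbproj(Z)$ is an open neighborhood of $\calO$ in $X$ with $\orbproj^{-1}(U)=\operatorname{Sat}(Z)$. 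By Proposition \ref{loc-morita} the inclusion $\sfG_{|Z}\hookrightarrow \sfG_{|\operatorname{Sat}(Z)}$ is a weak equivalence, so Proposition \ref{inv-we} produces an isomorphism of cochain complexes
\[
  \Omega^\bullet_\text{basic}\bigl(\sfG_{|\operatorname{Sat}(Z)}\bigr)\;\xrightarrow{\;\cong\;}\;\Omega^\bullet_\text{basic}\bigl(\sfG_p\ltimes V_p\bigr),
\]
and the image $\alpha_Z$ of $\alpha|_{\operatorname{Sat}(Z)}$ is a closed, $\sfG_p$-invariant form on $V_p$ which is horizontal with respect to every fundamental vector field of $\mathfrak{g}_p$.

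Next I apply the equivariant Poincar\'e lemma on $(V_p,\sfG_p)$. Since $V_p$ is a ball centered at $0$ and $\sfG_p$ acts linearly, the contraction $h_t(v):=tv$ is a $\sfG_p$-equivariant self-map of $V_p$, and the usual homotopy operator
\[
  (K\gamma)_v(v_1,\ldots,v_{k-1}):=\int_0^1 t^{k-1}\,\gamma_{tv}(v,v_1,\ldots,v_{k-1})\,dt
\]
satisfies $dK+Kd=\id$ on forms of positive degree. Equivariance of $h_t$ makes $K$ preserve $\sfG_p$-invariance, and linearity of the action gives $X_{V_p}(tv)=tXv$ for every $X\in\mathfrak{g}_p$, whence
\[
  (K\alpha_Z)_v(Xv,v_1,\ldots,v_{k-2})=\int_0^1 t^{k-2}\,\alpha_{Z,tv}\bigl(v,X_{V_p}(tv),v_1,\ldots,v_{k-2}\bigr)\,dt=0
\]
by horizontality of $\alpha_Z$ at each point $tv$. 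Therefore $\beta_Z:=K\alpha_Z$ is basic and satisfies $d\beta_Z=\alpha_Z$. Transporting $\beta_Z$ back through the inverse of the isomorphism above produces the required $\beta\in\Omega^\bullet_\text{basic}(U)$ with $d\beta=\alpha$ on $\orbproj^{-1}(U)$.

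The main point that demands care is checking that $K$ preserves the horizontality condition of a basic form; this step relies essentially on the linearity of the $\sfG_p$-action supplied by Zung's theorem, and it is precisely the reason for reducing all the way to the slice rather than working directly with the linearization $\sfG_{|\calO}\ltimes N\calO$. Everything else is a bookkeeping exercise using the slice theorem and Morita invariance of the basic complex already established earlier in the paper.
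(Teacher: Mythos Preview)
Your proof is correct and follows essentially the same route as the paper: reduce via Zung's theorem, Proposition~\ref{loc-morita}, and the Morita invariance of the basic complex (Proposition~\ref{inv-we}) to the action groupoid $\sfG_p\ltimes V_p$ of a compact group acting linearly, then invoke the equivariant Poincar\'e lemma there. The paper simply cites a reference for that last step, whereas you write out the homotopy operator $K$ explicitly and verify that it preserves basicness; your observation that linearity of the $\sfG_p$-action forces $X_{V_p}(tv)=tX_{V_p}(v)$, so that horizontality of $\alpha_Z$ propagates to $K\alpha_Z$, is exactly the point that makes the reduction to the linear slice model pay off.
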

\begin{proof}
By Zung's theorem, combined with Proposition \ref{loc-morita} and the Morita invariance 
of basic differential forms, cf.\ Proposition \ref{inv-we} above, it suffices to prove 
the statement for the quotient of an action of a compact Lie group. But this case is 
well-known, see e.g.~\cite[\S 5.3]{pf:book}
\end{proof}
\begin{proposition}
The sheaf complex
\[
0\to\underline{\R}\longrightarrow \calC^\infty_X\stackrel{d}{\longrightarrow} 
\Omega^1_\textup{basic}\stackrel{d}{\longrightarrow} 
\Omega^2_\textup{basic}\stackrel{d}{\longrightarrow} \ldots
\]
forms a fine resolution of $\underline{\R}$, the sheaf of locally constant 
functions on $X$ with values in $\R$.
\end{proposition}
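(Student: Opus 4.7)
The plan is to verify the two defining properties of a fine resolution: exactness of the sheaf complex (so that it resolves $\underline{\R}$) and fineness of each term.

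Exactness in positive degrees is precisely the content of the Poincar\'e Lemma established immediately above. The map $\underline{\R}\hookrightarrow\calC^\infty_X$ is obviously injective. For exactness at $\calC^\infty_X$, suppose $f\in\calC^\infty_X(U)$ satisfies $df=0$ as an element of $\Omega^1_\textup{basic}(U)$. Pulled back to $\sfG_0$, the function $f\circ\pi$ is locally constant on $\pi^{-1}(U)$. By Zung's slice theorem (Thm.~\ref{Thm:SliceThm}) combined with Prop.~\ref{loc-morita} and Morita invariance of basic forms (Prop.~\ref{inv-we}), every orbit $\calO\in X$ has a basis of neighborhoods modelled on quotients $V_p/\sfG_p$, where $V_p$ is a connected $\sfG_p$-invariant ball in $N_p\calO$; such neighborhoods are connected, so $f$ is locally constant on $X$ and hence lies in the image of $\underline{\R}$.

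For fineness, note that each $\Omega^k_\textup{basic}$ is a sheaf of $\calC^\infty_X$-modules, so it suffices to prove that $\calC^\infty_X$ is fine, i.e.~that $X$ admits smooth partitions of unity subordinate to any locally finite open cover $\{U_i\}_{i\in I}$ of $X$. Lift to the $\sfG$-invariant locally finite open cover $\{\pi^{-1}(U_i)\}$ of $\sfG_0$ and choose an ordinary smooth partition of unity $\{\chi_i\}$ on $\sfG_0$ with $\supp\chi_i\subset\pi^{-1}(U_i)$. Using a smooth Haar system $(\mu^x)_{x\in\sfG_0}$ and a cut-off function $c$ on $\sfG$ (both furnished by properness, as in the proof of Prop.~\ref{Prop:InvMet}), set
\[
  \tilde\chi_i(x):=\int_{s^{-1}(x)}\chi_i(t(g))\,c(t(g))\,d\mu^x(g).
\]
Right-invariance of the Haar system makes each $\tilde\chi_i$ a $\sfG$-invariant smooth function, while property (CU2) gives $\sum_i\tilde\chi_i\equiv 1$. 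Since $\pi^{-1}(U_i)$ is $\sfG$-invariant, any orbit meeting $\supp\chi_i$ remains inside $\pi^{-1}(U_i)$, so $\supp\tilde\chi_i\subset\pi^{-1}(U_i)$. The family $\{\tilde\chi_i\}$ thus descends to a smooth partition of unity on $X$ subordinate to $\{U_i\}$, establishing fineness.

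The main technical point is the averaging step for fineness: properness of $\sfG$, entering through condition (CU1) on the cut-off function, is what ensures that local finiteness of supports is preserved under integration against the Haar measures. Once this is in hand, the rest of the proof is a direct application of already-established machinery, namely the Poincar\'e Lemma for basic forms and the connectedness of Zung's local slice model.
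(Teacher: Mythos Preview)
Your proof is correct and follows the same two-step strategy as the paper (Poincar\'e Lemma for exactness, $\calC^\infty_X$-module structure for fineness), but you supply considerably more detail: the paper's proof simply asserts that being a sheaf of $\calC^\infty_X$-modules implies fineness, whereas you actually construct the requisite partitions of unity on $X$ via Haar-system averaging, and you also make explicit the degree-zero exactness argument that the paper folds into its one-line appeal to the Poincar\'e Lemma. These additions are genuine content rather than a different route---they justify steps the paper treats as ``clear.''
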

\begin{proof}
The previous Poincar\'e Lemma shows that the sequence is exact. Clearly, each $\Omega^\bullet_\textup{basic}$ is a sheaf of $C^\infty_X$-modules, and therefore fine.
\end{proof}
 Since by remark \ref{Rem:LocCon} the space $X$ is locally contractible and 
 locally compact, singular cohomology of $X$ with values in $\R$ coincides 
 with the sheaf cohomology of $X$ with values in $\underline{\R}$. 
 The proposition therefore entails 
\begin{corollary}[de Rham theorem]
There is a natural isomorphism
\[
 H^\bullet_\textup{sing}(X,\R)\cong H^\bullet(X,\underline{\R})\cong
 H^\bullet_\textup{basic}(\sfG,\R).
\]
\end{corollary}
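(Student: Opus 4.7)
The plan is to deduce both isomorphisms from standard sheaf-theoretic machinery, essentially packaging the real content that has already been established in the preceding proposition and in Section \ref{sec:stratification}.

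For the second isomorphism $H^\bullet(X,\underline{\R}) \cong H^\bullet_\textup{basic}(\sfG,\R)$, I would invoke the abstract de Rham theorem. The preceding proposition exhibits the complex $0 \to \underline{\R} \to \calC^\infty_X \to \Omega^\bullet_\textup{basic}$ as a resolution by fine sheaves of $\calC^\infty_X$-modules. Since $X$ is paracompact Hausdorff (as the orbit space of a proper Lie groupoid, cf.~\cite[Prop.~2.2]{Zun}), fine sheaves are acyclic for the global sections functor, and therefore sheaf cohomology can be computed as the cohomology of the complex of global sections. By construction $\Gamma(X,\Omega^\bullet_\textup{basic}) = \Omega^\bullet_\textup{basic}(\sfG_0)$, whose cohomology is by definition $H^\bullet_\textup{basic}(\sfG,\R)$. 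Naturality is automatic from the naturality of the identification of sheaf cohomology with the cohomology of a fine resolution.

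For the first isomorphism $H^\bullet_\textup{sing}(X,\R) \cong H^\bullet(X,\underline{\R})$, I would appeal to the classical comparison theorem between singular cohomology and sheaf cohomology with constant coefficients, which holds on any paracompact Hausdorff space that is locally contractible (see, for instance, Bredon's \emph{Sheaf Theory}, Ch.~III). All three hypotheses are satisfied: paracompactness and Hausdorffness come from the basic structural results on orbit spaces of proper Lie groupoids, while local contractibility is exactly Remark \ref{Rem:LocCon}, which applies since by Corollary \ref{cor:stratification} the space $X$ carries a smooth structure satisfying Whitney's condition B in every singular chart.

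The main technical obstacle is not in either isomorphism separately, but in verifying that all the hypotheses required for the two classical theorems are indeed available for our singular quotient $X$. That verification is precisely what the earlier sections of the paper accomplish: Whitney B stratifiability (hence local contractibility) from Section \ref{sec:stratification}, paracompactness from basic properties of proper Lie groupoids, and fineness of $\Omega^\bullet_\textup{basic}$ from the $\calC^\infty_X$-module structure. Once these are in hand, the proof of the corollary is a one-line invocation of two standard results, and I would present it as such.
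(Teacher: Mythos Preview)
Your proposal is correct and follows essentially the same approach as the paper: the second isomorphism comes from the preceding proposition (fine resolution) via the abstract de Rham theorem, and the first from local contractibility of $X$ (Remark~\ref{Rem:LocCon}) via the standard comparison between singular and sheaf cohomology. The paper's own argument is the same one-line invocation, phrased with ``locally contractible and locally compact'' where you write ``paracompact Hausdorff and locally contractible''; both sets of hypotheses suffice for the comparison theorem.
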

Since the left hand side in the Corollary  is the singular cohomology of a 
triangulable locally compact topological space by 
Cor.~\ref{cor:triangulation}, it naturally coincides with \v{C}ech cohomology.
Prop.~\ref{prop:goodcov} then implies our final result.
\begin{corollary}
  For a proper Lie groupoid $\sfG$ with compact quotient $X$, basic 
  cohomology $H^\bullet_\textup{basic}(\sfG,\R)$ is finite dimensional.
\end{corollary}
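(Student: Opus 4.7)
The plan is to combine the de Rham isomorphism $H^\bullet_\textup{basic}(\sfG,\R)\cong H^\bullet(X,\underline{\R})$ established in the preceding corollary with the existence of a finite good covering of $X$, reducing the finite-dimensionality to a standard \v{C}ech calculation. Since sheaf cohomology on a compact space with coefficients in a locally constant sheaf is computable from any good covering via \v{C}ech, the whole argument collapses to producing such a covering that is finite.

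First I would apply Prop.~\ref{prop:goodcov} to the trivial open covering $\calU=\{X\}$ of $X$, obtaining a locally finite good covering $\calV=(V_i)_{i\in I}$ of $X$. Next I would observe that under the standing compactness hypothesis on $X$ the covering $\calV$ must be finite: every point of $X$ has an open neighborhood meeting only finitely many members of $\calV$, and extracting a finite subcover of $X$ from such neighborhoods forces all but finitely many $V_i$ to be empty. Thus we obtain a \emph{finite} good covering of $X$, to which I would apply Leray's theorem to get a natural isomorphism $H^\bullet(X,\underline{\R})\cong \check{H}^\bullet(\calV,\underline{\R})$. Since $\calV$ is finite, each \v{C}ech cochain group $\check{C}^k(\calV,\underline{\R})$ is a finite direct sum of copies of $\R$, so the cohomology of this complex is finite-dimensional in every degree. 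Chaining together the isomorphisms of the preceding corollary with this one yields the result.

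The only point requiring care is the applicability of Leray's theorem: one needs $H^q(V_{i_0}\cap\cdots\cap V_{i_k},\underline{\R})=0$ for $q>0$ on every nonempty finite intersection. This is immediate from the defining property of a good covering, since each such intersection is contractible and $X$ is locally contractible by Rem.~\ref{Rem:LocCon}, so sheaf cohomology with values in $\underline{\R}$ reduces to singular cohomology of a contractible space. I do not expect any serious obstacle beyond this routine verification; the substantive work has already been done in establishing Prop.~\ref{prop:goodcov} and the de Rham theorem, and the compactness of $X$ is exactly what converts the locally finite good covering into a finite one.
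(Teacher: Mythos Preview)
Your proof is correct and follows essentially the same approach as the paper: the paper invokes the de Rham isomorphism, observes that singular cohomology of the triangulable space $X$ coincides with \v{C}ech cohomology, and then appeals to Prop.~\ref{prop:goodcov} together with compactness of $X$ to obtain a finite good covering, from which finite-dimensionality is immediate. The only cosmetic difference is that the paper passes through singular cohomology (using triangulability of $X$) before identifying with \v{C}ech cohomology, whereas you go directly from sheaf cohomology to \v{C}ech cohomology via Leray's theorem; both routes are standard and equivalent here.
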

\appendix
\section{Ehresmann's Theorem for manifolds with boundary}
A classical theorem by Ehresmann \cite{EhrSub} 
says that a proper surjective submersion
between smooth manifolds is locally  trivial. In this appendix, we present  
a generalized version of Ehresmann's Theorem, where the domain of  the 
submersion under consideration is allowed to be a manifold with boundary.
To our knowledge, this result, which we name Ehresmann's Theorem
for manifolds with boundary, has been proved first in the diploma thesis 
of Gutfleisch \cite{GutSRQA}. Since this work might not be accessible to the 
reader, we present here a proof. Note that our proof is different to
the one by Gutfleisch, and uses the local triviality theorem by Mather  
\cite{Mat:NTS}.

\begin{theorem}[Ehresmann's Theorem for manifolds with boundary]
\label{ThmEhresmannMfdBdry}
  Let $M$ be a smooth manifold without boundary, $N$ a smooth manifold with 
  boundary, and $f: N \rightarrow M$  a proper surjective submersion such 
  that the restriction $f_{| \partial N}$ is a submersion as well. 
  Then $f$ is locally trivial. Moreover, each fiber $F_p := f^{-1} (p)$, 
  $p\in M$ is a compact manifold with boundary. Its boundary is given by 
  $\partial F_p = \partial N \cap F_p$.  
\end{theorem}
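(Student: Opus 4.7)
The plan is to adapt the classical proof of Ehresmann's theorem: construct lifts of coordinate vector fields from $M$ to $N$ that are additionally tangent to $\partial N$, and then integrate them to build the local trivialization. The assumption that $f_{|\partial N}$ is a submersion is precisely what makes such tangent lifts available, while properness of $f$ guarantees that the flows exist over the required parameter intervals.

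Fix $p\in M$ and a chart $(U;x^1,\dots,x^m)$ centered at $p$, with $U$ identified to an open ball in $\R^m$. I would first construct, for each $i = 1,\dots,m$, a smooth vector field $\tilde X_i$ on $f^{-1}(U)$ which is $f$-related to $\partial/\partial x^i$ and tangent to $\partial N$ at every boundary point. In the interior any submersion chart for $f$ provides a local lift. Near a boundary point $q\in\partial N$, the dimensional identity $T_qN = T_q\partial N + \ker df_q$ (a direct consequence of both $f$ and $f_{|\partial N}$ being submersions) allows one to pick an inward-pointing vector in $\ker df_q$ that is transverse to $\partial N$; extending it locally to a section of $\ker df$ and flowing with it produces adapted coordinates $(x^1,\dots,x^m,z^1,\dots,z^{n-m-1},t)$ on a neighborhood of $q$ in which $\partial N=\{t=0\}$, $t\geq 0$, and $f$ is projection onto the first $m$ coordinates. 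In such a chart $\partial/\partial x^i$ is a local lift which is automatically tangent to $\partial N$. I would then glue local lifts with a smooth partition of unity subordinate to a cover of $f^{-1}(U)$; both defining conditions (being $f$-related to $\partial/\partial x^i$ and tangency to $\partial N$ at boundary points) are preserved by convex combinations, so the resulting $\tilde X_i$ has the required properties globally on $f^{-1}(U)$.

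Next I would use properness to ensure the flows $\Phi^i_t$ of $\tilde X_i$ are defined for all parameters needed. Choose a smaller ball $U'\subset U$ with $\overline{U'}$ compact in $U$; then $f^{-1}(\overline{U'})$ is compact by properness. Since each $\tilde X_i$ is $f$-related to $\partial/\partial x^i$, any integral curve satisfies $f(\Phi^i_t(q)) = f(q)+te_i$, so starting from $q\in f^{-1}(U')$ the flow stays in $f^{-1}(U)$ for as long as $f(q)+te_i\in U$; compactness of $f^{-1}(\overline{U'})$ then ensures the flow is defined for all such $t$. Because $\tilde X_i$ is tangent to $\partial N$, each $\Phi^i_t$ preserves $\partial N$. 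After possibly shrinking $U'$ once more I would define
\[
  \Psi:U'\times F_p \longrightarrow f^{-1}(U'),\qquad \Psi(x,q) := \Phi^1_{x^1}\circ\cdots\circ\Phi^m_{x^m}(q),
\]
verify $f\circ\Psi=\operatorname{pr}_1$, apply the inverse function theorem (in the manifold-with-boundary category) at $(0,q)$ to conclude that $\Psi$ is a diffeomorphism, and observe that $\Psi$ restricts to a diffeomorphism $U'\times(F_p\cap\partial N)\cong f^{-1}(U')\cap\partial N$. Compactness of $F_p$ is then immediate from properness, and $\partial F_p = F_p\cap\partial N$ follows from the trivialization (or directly from transversality of $f$ and $f_{|\partial N}$ at the common regular value $p$).

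The main obstacle is the combined Step of producing the lifts $\tilde X_i$: one needs that local lifts can be chosen in boundary-adapted form, which is where the submersion hypothesis on $f_{|\partial N}$ enters in an essential way, and that the partition-of-unity argument respects both the lifting condition and tangency to $\partial N$. Once this is secured, the remainder is a routine adaptation of the standard Ehresmann proof, with properness handling flow completeness exactly as in the boundaryless case.
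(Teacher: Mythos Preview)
Your argument is correct and is a genuinely different route from the paper's. You give the classical, self-contained Ehresmann proof adapted to the boundary case: lift coordinate vector fields so that they are tangent to $\partial N$ (using that $f_{|\partial N}$ is a submersion to build boundary-adapted submersion charts), patch with a partition of unity, and integrate using properness. The paper instead treats $N$ as a stratified space with strata $N^\circ$ and $\partial N$, uses the hypothesis on $f_{|\partial N}$ to construct an inward-pointing vector field $V$ in $\ker Tf$ along $\partial N$, integrates $V$ to obtain a smooth tube $(\sfT_{\partial N},\pi_{\partial N},\varrho_{\partial N})$ compatible with $f$, and then invokes Mather's theorem \cite[Cor.~10.2]{Mat:NTS} that a proper surjective controlled submersion from a stratified space with control data to a manifold is locally trivial. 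Your approach is more elementary and entirely self-contained, while the paper's approach is shorter once Mather's machinery is in hand and fits naturally into the stratification-theoretic framework used throughout; it also makes transparent how the result would generalize to manifolds with corners or more general stratified domains.
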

\begin{proof}
  First observe that the manifold with boundary $N$ carries 
  a canonical stratification given by the decomposition into the interior 
  $N^\circ$ and the boundary $\partial N$. Since   $f_{| \partial N}$ is a 
  submersion by assumption, and $M$ a manifold without boundary, there exists a nowhere vanishing  
  ``inward pointing'' smooth vector field $V:U \rightarrow TN$ on an open 
  neighborhood $U$ of $\partial N$ in $N$ such that $Tf \circ V =0$ and 
  $T_qN = T_q{\partial N}  \oplus \R \cdot V(q)$ for all $q \in \partial N$.
  %
  %
  Then there exists a continuous map 
  $\varepsilon :\partial N \rightarrow \R^{>0}$ and for each $q\in \partial N$ 
  a uniquely determined smooth  integral curve 
  $\gamma_q  : [0,\varepsilon (q)) \rightarrow U$ of $V$ such  $\gamma_q (0) =q$ 
  for all $t\in [0,\varepsilon (q))$.
  Put 
  $[0,\varepsilon ) := \{ (q,t) \in \partial N \times \R^{\geq 0}\mid   
   t < \varepsilon (q) \}$
  and 
  \[
    \sfT_{\partial N} := \{ \gamma_q (t) \in U \mid  q \in \partial N \: \&  \:
    t\in [0,\varepsilon (q)) \}  .
  \]
  Then 
  \[
   \gamma : [0,\varepsilon )  \rightarrow \sfT_{\partial N}, \: (q,t) \mapsto
   \gamma_q(t)
  \] 
  is a diffeomorphism, and we can define 
  $\pi_{\partial N} : \sfT_{\partial N} \rightarrow \partial N$ 
  and $\varrho_{\partial N} : \sfT_{\partial N} \rightarrow  \R^{\geq 0}$ 
  as the uniquely determined smooth maps such that 
  $ (\pi_{\partial N} , \varrho_{\partial N}) $ is the inverse of $\gamma$. 
  By construction, $\partial N = \varrho_{\partial N}^{-1} (0)$, and
  $(\sfT_{\partial N} , \pi_{\partial N} , \varrho_{\partial N}) $ is a smooth tube 
  of $\partial N$ in the stratified space $N$. 
  Moreover, by the choice of $V$, one has that
  \[
    f ( \pi_{\partial N} (q)) = f (q) \:\text{ for all $q \in \sfT_{\partial N} $ }.
  \]
  Hence, $f:N \rightarrow M$ is a controlled submersion in the sense of 
  Mather \cite[\S 9, p.~46]{Mat:NTS}. Since by Mather \cite[Cor.~10.2]{Mat:NTS}
  every proper surjective controlled submersion from a  
  stratified space with smooth control data to a smooth manifold is 
  locally trivial, the submersion $f$ has to be locally trivial. 
  The claim on the fibers is obvious by the assumptions on $f$.
\end{proof}
\bibliographystyle{alpha}

\end{document}